%% file: bending_kleinian.tex
\documentclass[notitlepage,oneside,a4paper]{amsart}
\usepackage{mathrsfs}
\usepackage{amsmath,amssymb,enumerate}
\usepackage{epsfig,fancyhdr,color}
\usepackage{epstopdf}
\usepackage{amssymb}
\usepackage{amsmath,amsthm}
\usepackage{mathtools}
\usepackage{latexsym}
\usepackage{amscd}
\usepackage{psfrag}
\usepackage{graphicx}
\usepackage{epsf}
\usepackage[utf8]{inputenc}
\usepackage[all]{xy}
\usepackage{tikz}
\usepackage{tikz-cd}
\usepackage{prettyref}
\usepackage{subfigure}
\usepackage{float}
\usepackage{color}
\usepackage{array}
\usepackage{cite}
\usepackage[hidelinks]{hyperref}
\usepackage{enumitem}
\usepackage{faktor}
\usepackage{booktabs}

\input{commands.tex}
\newtheorem{thmintro}{Theorem}

\newtheorem{corintro}[thmintro]{Corollary}
\usepackage[totalwidth=16cm,totalheight=24cm]{geometry}

\title[Bending parameterization]{Bending parameterization of geometrically finite hyperbolic manifolds}

\author{Bruno Dular}
\address{Bruno Dular:
University of Luxembourg, FSTM, Department of Mathematics, 
Maison du nombre, 6 avenue de la Fonte,
L-4364 Esch-sur-Alzette, Luxembourg}
\email{bruno.dular@uni.lu}

\date{September 24, 2026}

\begin{document}
\begin{abstract}
	We show that non-Fuchsian geometrically finite hyperbolic structures on a given hyperbolizable 3-manifold $M$ are uniquely determined, up to isotopy, by their bending laminations. Consequently, the bending map from the space of non-Fuchsian geometrically finite structures on $M$, endowed with the strong topology, to the space of bending laminations, endowed with Lecuire's tubular topology, is a homeomorphism.

	This extends the convex co-compact case established with Schlenker. The proof combines hyperbolic Dehn filling, continuity and properness of the bending map (Lecuire) and real-analyticity of its fibres (Bonahon). We also establish a criterion for contractibility of the boundary fibres of a continuous extension of a homeomorphism, extending Finney's theorem to a boundary setting. This topological result may be of independent interest.
\end{abstract}

\maketitle

\setcounter{tocdepth}{1}
\tableofcontents

\section{Introduction}\label{sec:introduction}

Let $M$ be an orientable 3-manifold that admits a complete hyperbolic metric and has finitely generated $\pi_1(M)$. By the tameness theorem~\cite{agol2004tamenesshyperbolic3manifolds,MR2188131}, $M$ is homeomorphic to the interior of a compact 3-manifold with boundary $\overline{M}$. Let $\partial M$ denote the union of the non-toroidal components of $\partial\overline{M}$, which we assume to be nonempty. A geometrically finite hyperbolic structure $\sigma$ on $M$ carries two kinds of boundary data on $\partial M$: the conformal structure at infinity $\mm(\sigma)$ and the bending lamination $\bb(\sigma)$ of the boundary of its convex core. The convex core boundary is a hyperbolic surface bent along a geodesic lamination, and $\bb(\sigma)$ records the amount of bending as a transverse measure~\cite{MR0903850,MR0903852}. Rank one cusps are accounted for by closed leaves of weight $\pi$, so that $\bb(\sigma)$ is an element of the space $\cML(\partial M)$ of measured laminations on $\partial M$.

The conformal structure is well understood: by the Ahlfors--Bers theory, $\sigma$ is determined, within its quasi-conformal deformation space, by $\mm(\sigma)\in\cT(\partial_{\infty}\sigma)$. Since $\cML(\partial M)$ and $\cT(\partial M)$ have the same dimension, it is natural to ask whether $\bb(\sigma)$ determines $\sigma$ as well. This question is attributed to Thurston. We answer this affirmatively for non-Fuchsian geometrically finite structures.

\subsection*{Previous results}
Fuchsian structures, when $M$ admits some, are not distinguished by their bending laminations (Remark~\ref{rem:bending_fuchsian}) and are excluded throughout: we write $\cGF^{\nf}(M)$ for the space of non-Fuchsian geometrically finite structures on $M$, up to isotopy. It is endowed with the topology of \emph{strong convergence}.

The image of $\bb\colon\cGF^{\nf}(M)\to\cML(\partial M)$ was determined by Bonahon--Otal~\cite{MR2144972} and Lecuire~\cite{MR2207784} (Theorem~\ref{thm:realizability_criterion}): a measured lamination $\lambda$ is \emph{realizable} (i.e.\ in the image of $\bb$) if and only if it has no closed leaf of weight $>\pi$ and satisfies an annulus condition and a disk condition. We denote by $\cP(M)$ the set of such laminations, and by $\cP_{\nc}(M)\subset\cP(M)$ those without closed leaf of weight $\geq\pi$. Bonahon--Otal also proved that a rational lamination $\lambda\in\cP(M)$ is realized by a unique hyperbolic structure, using rigidity results for cone manifolds~\cite{MR1622600}.

Injectivity of $\bb$ in general does not follow from that strategy. Keen--Series~\cite{MR2052972} and Series~\cite{MR2258745} established it for punctured torus groups, and Bonahon~\cite{MR2186972} near the Fuchsian locus of quasi-Fuchsian space. Lecuire~\cite{MR2464096,lecuire2025propernessbendingmap} showed that $\bb$ becomes continuous and proper once $\cP(M)$ is endowed with a suitable \emph{tubular topology}, described below. Recently, with Schlenker, we proved that $\bb$ is injective on convex co-compact structures~\cite{dularschlenker2024pleating}. Together with the results above, it follows that $\bb\colon\cCC^{\nf}(M)\to\cP_{\nc}(M)$ is a homeomorphism when $M$ admits convex co-compact structures (Theorem~\ref{thm:injectivity_convex_cocompact}).

\subsection*{Main results}
The present paper concerns geometrically finite hyperbolic manifolds and removes the convex co-compactness assumption. We proceed in two steps. A geometrically finite structure is \emph{minimally parabolic} if it has no rank-one cusps (when $\pi_1(M)$ contains no $\Z^2$, this means convex co-compact), and the space of minimally parabolic structures on $M$ is denoted $\cMP(M)$. The first step allows rank-two cusps.

\begin{thmintro}[Theorem~\ref{thm:minimally_parabolic}]\label{thmA}
	Let $M$ be a hyperbolizable 3-manifold. The bending map
	\begin{equation*}
		\bb\colon\cMP^{\nf}(M)\longrightarrow\cML(\partial M)
	\end{equation*}
	is a homeomorphism onto its image $\cP_{\nc}(M)$.
\end{thmintro}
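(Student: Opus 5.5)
The plan is to derive Theorem~\ref{thmA} from the convex co-compact case (Theorem~\ref{thm:injectivity_convex_cocompact}) by hyperbolic Dehn filling on the rank-two cusps, together with Lecuire's continuity and properness of $\bb$ for the tubular topology. Continuity and properness of $\bb\colon\cMP^{\nf}(M)\to\cP_{\nc}(M)$ are taken from Lecuire, and surjectivity onto $\cP_{\nc}(M)$ from the realizability criterion (Theorem~\ref{thm:realizability_criterion}). Since the image contains no closed leaf of weight $\pi$, no rank-one cusps are forced. The whole difficulty is therefore injectivity. Once injectivity holds, a continuous proper bijection onto a locally compact Hausdorff space is a homeomorphism, so the theorem follows.

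\textbf{Step 1: Dehn filling.} Let $T_1,\dots,T_k$ be the toroidal components of $\partial\overline M$. For slopes $s=(s_1,\dots,s_k)$ that are long enough, let $M_s$ be the Dehn filled manifold. Using Thurston's hyperbolic Dehn filling in the geometrically finite setting (Bromberg, Hodgson--Kerckhoff, Brock--Bromberg), each $\sigma\in\cMP^{\nf}(M)$ with fixed boundary data near the convex core has filled structures $\sigma_s\in\cCC(M_s)$. These converge geometrically to $\sigma$ as $s\to\infty$, with cusp cores replaced by short geodesics. Since $\partial M=\partial M_s$ (the non-toroidal boundary is unchanged), $\cML(\partial M)$ is the same space for $M$ and $M_s$. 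For $s$ large, $\cP_{\nc}(M)\subset\cP_{\nc}(M_s)$ locally: the disk and annulus conditions in $M_s$ reduce to those in $M$ away from new essential disks and annuli, and such disks or annuli must have long boundary.

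\textbf{Step 2: local injectivity through filled structures.} Suppose $\bb(\sigma)=\bb(\sigma')=\lambda$. I would not fill $\sigma$ directly. Instead I would consider, for each large $s$, the unique structure $\tau_s\in\cCC^{\nf}(M_s)$ with $\bb(\tau_s)=\lambda$, which exists and is unique by Theorem~\ref{thm:injectivity_convex_cocompact}. By properness and a compactness argument, every geometric limit of $\tau_s$ as $s\to\infty$ is a structure on $M$ with bending $\lambda$. The aim is to show this limit is unique, and hence that $\sigma=\sigma'$.

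To make this work I would use Bonahon's real-analyticity of fibres. The fibre $\bb^{-1}(\lambda)\subset\cMP^{\nf}(M)$ is compact by properness and is a real-analytic subset. Near the Dehn filling locus the cone-manifold deformation theory (Hodgson--Kerckhoff) gives a local parametrisation by boundary data plus cone angles. If the fibre contained two points, or a positive-dimensional component, then for large $s$ each point would produce a filled structure on $M_s$ bent along a lamination close to $\lambda$. Local openness of $\bb$ on $\cCC^{\nf}(M_s)$ then adjusts these to have bending exactly $\lambda$, producing two distinct preimages in $M_s$ and contradicting Theorem~\ref{thm:injectivity_convex_cocompact}.

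\textbf{Main obstacle.} The hardest step is the last one: showing that distinct points of the fibre over $\lambda$ in $M$ give distinct, still separated, preimages of $\lambda$ in $M_s$ uniformly as $s\to\infty$. This needs control of how the bending map of $M_s$ depends on $s$ near the $\sigma_s$. My first attempt would be to prove that $\bb_s\circ(\text{filling})$ converges to $\bb$ locally uniformly, using Brock--Bromberg drilling estimates on the convex core boundary, and then to apply a degree argument. The degree of $\bb_s$ is $1$ by the convex co-compact theorem, so a limiting degree-one map with a compact fibre gives, via a Finney-type criterion, a cell-like fibre. A cell-like real-analytic compact fibre that is not a single point would still need excluding; here I would again use injectivity for rational $\lambda$ (Bonahon--Otal) and density to collapse the fibre to a point.
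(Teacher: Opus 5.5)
\textbf{Step 2 does not work.} Since $\bb\colon\cCC^{\nf}(M_s)\to\cP_{\nc}(M_s)$ is a bijection, $\lambda$ has exactly one preimage $\tau_s$. ``Adjusting'' filled structures near two points $\sigma\neq\sigma'$ of $\bb^{-1}(\lambda)$ by local openness therefore moves both of them to $\tau_s$. A contradiction would need these adjustments to be small uniformly in $s$, i.e.\ equicontinuity of the inverses $\bb_s^{-1}$ near $\lambda$. That is essentially the statement you are trying to prove. (Subconvergence of $(\tau_s)$ across varying manifolds is also not supplied by Lecuire's properness, which is for a fixed $M$.)

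\textbf{Your fallback is the paper's route.} The paper proceeds as follows.
\begin{itemize}
\item It takes $F_s\colon\cMP(M)\to\cCC(M_s)$ to be the map fixing the conformal boundary. Then $F_s$ is a homeomorphism, and $\bb\circ F_s$ is injective wherever $F_s$ lands in the non-Fuchsian locus. No inclusion $\cP_{\nc}(M)\subset\cP_{\nc}(M_s)$ and no cone-manifold theory is needed.
\item It proves that $\bb\circ F_s\to\bb$ continuously. Brock--Bromberg drilling, together with the uniform (Douady--Earle/Tukia) continuity of simultaneous uniformization, gives strong convergence $F_s(\sigma_s)\to\sigma$. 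Lecuire's limiting argument for convex pleated surfaces then gives convergence of the bending laminations.
\item It checks that $F_s(U)\subset\cCC^{\nf}(M_s)$ for a relatively compact neighbourhood $U$ of the fibre and all large $s$, using that Fuchsian convex cores have zero volume.
\item It applies Finney's theorem. This needs injectivity of the approximants near the fibre and the ANR property of the fibre, which comes from \L{}ojasiewicz's triangulation of compact real-analytic sets. No degree argument is involved.
\end{itemize}

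\textbf{The genuine gap is your last step.} Finney only shows that the compact fibre is contractible. You propose to collapse it to a point using Bonahon--Otal's uniqueness over rational $\lambda$ plus density, and this cannot work. Injectivity over a dense subset of the target says nothing about the fibre over a given irrational $\lambda$. For example, a continuous proper map $\R\to\R$ that crushes an interval onto an irrational value is injective over a dense set and has contractible fibres, yet is not injective.

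What is missing is the fundamental class argument. By Borel--Haefliger and Sullivan, a compact real-analytic space $X$ of dimension $d$ has $H_d(X;\Z/2\Z)\neq 0$. So if $X$ is contractible then $d=0$, $X$ is finite, and hence $X$ is a single point. This is where Bonahon's real-analyticity of the fibres is actually used, and it is what turns contractibility into injectivity.
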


When $M$ has rank-two cusps, it admits no Fuchsian structure. When it has none, it is the convex co-compact case of~\cite{dularschlenker2024pleating}. By the density theorem~\cite{MR2079598,MR2821565,MR3001608}, $\cMP(M)$ is dense in the space of all hyperbolic structures on $M$.

The second step allows rank-one cusps. Here the target must carry Lecuire's tubular topology~\cite{lecuire2025propernessbendingmap}, which we recall. A measured lamination $\lambda\in\cP(M)$ with closed leaves of weight $\pi$ is the bending lamination of a structure whose parabolic locus $P$ is the union of these leaves. A measured lamination $\mu$ is close to $\lambda$ in the tubular topology if it is close to $\lambda$ in the usual sense away from $P$, and if near each component $c$ of $P$ it is ``nearly parallel'' to $c$ and carries total transverse measure at least $\pi-\epsilon$ across $c$, with no upper bound (Definition~\ref{def:tubular_topology}). Indeed, a sequence of structures pinching $c$ to a cusp may have bending laminations spiraling more and more around $c$: a strongly bent torus around $c$ then escapes at infinity and disappears in the geometric limit~\cite{MR2144972,MR2464096,lecuire2025propernessbendingmap}.\footnote{This is analogous to the fact that, in the Fenchel--Nielsen coordinates of the conformal boundary, such a \emph{strongly convergent} sequence of structures pinching $c$ may have unbounded twist parameters about $c$, as long as the length of $c$ tends to $0$.} Let $\cBL(M)$ denote $\cP(M)$ with this topology.

\begin{thmintro}[Theorem~\ref{thm:injectivity_bending_map_rank_one_cusps}]\label{thmB}
	Let $M$ be a hyperbolizable 3-manifold. The bending map
	\begin{equation*}
		\bb\colon\cGF^{\nf}(M)\longrightarrow\cBL(M)
	\end{equation*}
	is a homeomorphism.
\end{thmintro}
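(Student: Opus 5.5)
We reduce Theorem~\ref{thmB} to injectivity. By Lecuire's work the bending map $\bb\colon\cGF^{\nf}(M)\to\cBL(M)$ is continuous and proper, and by Bonahon--Otal and Lecuire (Theorem~\ref{thm:realizability_criterion}) it is surjective onto $\cP(M)$. A continuous proper bijection between locally compact Hausdorff spaces is a homeomorphism, so it suffices to show that each fibre $\bb^{-1}(\lambda)$ is a single point. Stratify $\cGF^{\nf}(M)$ by the parabolic locus $P\subset\partial M$. The parabolic locus of $\sigma$ is exactly the union of the closed leaves of $\bb(\sigma)$ of weight $\pi$. Hence the fibre over $\lambda$ lies in the stratum $\cGF_P^{\nf}(M)$, with $P$ read off from $\lambda$.

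If $P=\emptyset$, this stratum is $\cMP^{\nf}(M)$ and Theorem~\ref{thmA} applies. For general $P$, view $\cGF_P(M)$ as a boundary stratum of the larger space $\cMP^{\nf}(M)$, whose closure contains it via pinching. The plan is to realize $\cGF^{\nf}(M)$ near $\cGF_P$ as the limit of structures in which the curves of $P$ are pinched. The extension of $\bb|_{\cMP^{\nf}}$ to $\cGF_P$ is continuous for the tubular topology, where $\cP_{\nc}$ accumulates on laminations with $\pi$-weighted leaves along $P$.

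The topological criterion announced in the abstract then applies: a Finney-type theorem on the fibres of a continuous extension of a homeomorphism to a boundary. We know that $\bb\colon\cMP^{\nf}\to\cP_{\nc}$ is a homeomorphism (Theorem~\ref{thmA}) and that the extension is proper (Lecuire). The criterion should yield that each boundary fibre $\bb^{-1}(\lambda)$, for $\lambda\in\cP\setminus\cP_{\nc}$, is compact, connected, and in fact cell-like or contractible. Connectivity requires local connectivity of the target near $\lambda$. I would obtain this from the explicit description of tubular neighbourhoods: the set of measured laminations nearly parallel to $P$ with weight $\ge\pi-\epsilon$ is connected, by a train-track description.

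Next, I would use Bonahon's real-analyticity. The fibre is a real-analytic subvariety of the stratum $\cGF_P$, which is a complex manifold parametrised by $\cT(\partial_\infty\sigma)$ via Ahlfors--Bers. A compact connected real-analytic set of positive dimension would have to carry nonconstant directions. To rule these out, I would use a hyperbolic Dehn filling argument. Perform Dehn filling on the rank-one cusps, or replace them by cone singularities of angle close to $0$. Rational approximations then show that a rational $\lambda$ has a one-point fibre, which is the Bonahon--Otal rigidity. Combined with a local rigidity (infinitesimal injectivity) statement for $\bb$ at points of $\cGF_P$, proven through the filled manifolds whose bending maps are injective by Theorem~\ref{thmA}, this forces the fibre to be discrete. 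A discrete, connected and compact fibre is a single point.

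I expect the main obstacle to be this last step: transferring injectivity from the filled manifolds to the cusped ones. This needs uniform control on the geometry of the Dehn fillings near the cusp, namely the drilling/filling theorem of Hodgson--Kerckhoff and Brock--Bromberg, and control on how the bending laminations of the filled structures converge in the tubular topology. The contractibility criterion is what should let one bypass pointwise control and argue only with connectedness plus analyticity.
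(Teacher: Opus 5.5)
Your frame matches the paper's: reduce to injectivity, treat $\cGF^{\nf}(M;P)$ as a boundary stratum of $\cMP^{\nf}(M)$, and invoke a boundary version of Finney's theorem. However, there is a genuine gap in how you close the argument. You mention that the criterion might give contractibility, but you then use only connectedness. You try to get discreteness of the fibre from a local or infinitesimal injectivity of $\bb$ at points of $\cGF^{\nf}(M;P)$, transferred from Dehn-filled manifolds. This step is not viable, for three reasons.
\begin{enumerate}
\item Rank-one cusps are $\Z$-cusps facing the annuli $N(P)\subset\partial M$, not torus cusps. There is nothing to Dehn fill and no canonical approximating sequence, which is exactly why the strategy of Theorem~\ref{thm:minimally_parabolic} does not extend.
\item Cone-manifold structures lie outside the scope of Theorem~\ref{thm:minimally_parabolic}.
\item Injectivity of approximating maps never yields local or infinitesimal injectivity of the limit. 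Limits of injective maps can have positive-dimensional fibres, which is why Finney's theorem only concludes contractibility.
\end{enumerate}
Connectedness plus real-analyticity does not suffice either: a circle is a compact connected real-analytic set. The missing idea is that Theorem~\ref{thm:fibres_contractible_boundary_of_homeo} gives \emph{contractibility} of the fibres. Theorem~\ref{thm:real_analytic_spaces_have_fundamental_class} then finishes at once: a compact real-analytic space of dimension $d$ has $H_d(\,\cdot\,;\Z/2\Z)\neq 0$, so a contractible one has $d=0$ and is a point (Corollary~\ref{cor:contractible_compact_real_analytic_spaces_are_singletons}). No filling or rigidity argument is needed at this stage.

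You also need to verify the hypotheses of the criterion for $\bb_P\colon\cGF^{\nf}(M;\emptyset,P)\to\cBL(M;\emptyset,P)$, which your proposal does not do. The local product structure near $\cGF^{\nf}(M;P)$ requires that the conformal boundary map $\cGF(M)\to\augmTeichPinch(\partial M)$ be a homeomorphism for the strong topology (Theorem~\ref{thm:conformal_boundary_map_homeo}). Its properness needs a genuine argument in the compressible case. Combined with extended Fenchel--Nielsen coordinates, it gives $\cGF(M;\emptyset,P)\cong\widehat{(\HH^2)^p}\times\cT(\partial M-P)$. The condition at infinity requires $V\cap\bb(\cMP^{\nf}(M))$ to be \emph{contractible}, not merely connected, for a neighbourhood basis $V$ of $\lambda$. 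In Dehn--Thurston coordinates adapted to a pants decomposition containing $P$, this set is the slit box $(((-\epsilon,\epsilon)\times(\pi-\epsilon,+\infty))-(\{0\}\times[\pi,+\infty)))^{|P|}\times V_\epsilon(y)$. That description holds only after showing that the annulus and disk conditions persist in tubular neighbourhoods (Lemma~\ref{lem:openness_of_cP}), so that the whole slit box consists of realizable laminations.

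A minor point: $\cGF^{\nf}(M)$ and $\cBL(M)$ are not locally compact along the boundary strata, since twists about pinched curves are unbounded in every neighbourhood. So your ``proper bijection between locally compact spaces'' step does not apply as stated. Deduce continuity of $\bb^{-1}$ sequentially from first-countability, properness and injectivity instead.
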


Continuity, properness and surjectivity were known by the aforementioned works. The new content is injectivity. In particular, quasi-Fuchsian structures on $S\times\R$, for $S$ a hyperbolizable surface with punctures, form a stratum of the space of geometrically finite structures on a handlebody, and we obtain the following extension of the punctured torus case~\cite{MR2258745}, see Section~\ref{sec:punctured_surface_groups}.

\begin{corintro}\label{corC}
	Let $S$ be a hyperbolizable surface of finite type. The bending map
	$$\bb\colon\cQF^{\nf}(S)\to\cML(S)\times\cML(S)$$
	is a homeomorphism onto its image.
\end{corintro}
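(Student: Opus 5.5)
The plan is to deduce Corollary~\ref{corC} from Theorem~\ref{thmB}, applied to the handlebody $M=S\times\R$ (a handlebody when $S$ has punctures; when $S$ is closed the statement is the convex co-compact case, Theorem~\ref{thm:injectivity_convex_cocompact}). We regard $\cQF(S)$ as a subset of geometrically finite structures on the compact model $\overline{S}\times[0,1]$, where $\overline{S}$ is the compact surface obtained by removing open cusp neighbourhoods.

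\emph{Identifying the stratum.} A quasi-Fuchsian structure on $S$ is geometrically finite, and its parabolic locus is the set of annuli $\partial\overline{S}\times[0,1]$. So it is a structure on the handlebody $\overline{M}=\overline{S}\times[0,1]$ whose parabolic locus $P$ is the family of core curves of these annuli, pushed into $\partial\overline M$. This locus is fixed, and the non-toroidal boundary $\partial M$ is the genus-$g$ surface obtained by doubling $\overline{S}$ along its boundary. Its bending lamination is $\bb(\sigma)=\lambda^+\cup\lambda^-\cup\pi P$, where $\lambda^\pm\in\cML(S)$ are the bending laminations of the two components of the convex core boundary, viewed inside $S\times\{0\}$ and $S\times\{1\}$, which are the complementary pieces of $P$ in $\partial M$. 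Conversely, any $\sigma\in\cGF^{\nf}(M)$ with parabolic locus exactly $P$ is obtained from a non-Fuchsian quasi-Fuchsian group by marking. This follows from the standard description of pared manifolds $(\overline{S}\times I,\partial\overline{S}\times I)$ via Ahlfors--Bers: the deformation space of a pared manifold is $\cT$ of its boundary pieces, which here is $\cT(S)\times\cT(S)$. Hence $\cQF^{\nf}(S)$ is identified with the stratum $\cGF_P^{\nf}(M)\subset\cGF^{\nf}(M)$. Under this identification the map $\bb$ of Corollary~\ref{corC} becomes the composition of the restriction of the bending map to that stratum with the map $\cML(S)\times\cML(S)\to\cML(\partial M)$, $(\lambda^+,\lambda^-)\mapsto\lambda^+\cup\lambda^-\cup\pi P$.

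\emph{Transferring the homeomorphism.} Injectivity is immediate from Theorem~\ref{thmB}. For the topology, two checks are needed.

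First, on the source: the strong topology on the stratum $\cGF^{\nf}_P(M)$ should coincide with the usual topology of $\cQF(S)$. Both are algebraic convergence together with Hausdorff convergence of limit sets. Within a fixed parabolic stratum, geometric and algebraic limits agree, so this is the standard topology.

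Second, on the target: the tubular topology of $\cBL(M)$, restricted to laminations of the form $\lambda^+\cup\lambda^-\cup\pi P$, should coincide with the product topology on $\cML(S)\times\cML(S)$. This follows from Definition~\ref{def:tubular_topology}. Away from $P$, tubular closeness is ordinary closeness of measured laminations. Near each component of $P$, any lamination in this family has exactly the leaf $c$ with weight $\pi$, so the tubular condition is automatically satisfied. One must also check that convergence of $\lambda_n^\pm$ in $\cML(S)$, with nothing spiralling into the puncture, is the same as closeness away from $P$ on $\partial M$. I expect this comparison of the tubular topology with the product topology, in particular near the cusp curves, to be the main technical point. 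Its essential content is that measured laminations on $S$ cannot accumulate transverse measure near the puncture, so that weak-$*$ convergence on $\partial M$ restricted to $\partial M\setminus P$ is exactly convergence in $\cML(S)$.

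Once both checks are done, the restriction of the homeomorphism of Theorem~\ref{thmB} to a subspace is a homeomorphism onto its image. Composing with the identifications above gives the corollary.
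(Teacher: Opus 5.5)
Your proposal is correct and follows the paper's own argument. You identify $\cQF(S)$ with the stratum $\cGF(M;P)$ of the handlebody $\overline{S}\times[0,1]$, where $P$ consists of the cores of the annuli $\partial\overline{S}\times[0,1]$, and restrict the homeomorphism of Theorem~\ref{thmB} to that stratum. The two topology comparisons you add are left implicit in the paper. The source side follows from the conformal boundary map being a homeomorphism on each stratum (Theorem~\ref{thm:conformal_boundary_map_homeo}). The target side follows from the Dehn--Thurston description of tubular neighbourhoods in Proposition~\ref{prop:contractibility_at_infinity_geometrically_finite}: restricted to the stratum, it reduces to the ordinary topology on the coordinates away from $P$.
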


\subsection*{Contractibility of boundary fibres}
The passage from minimally parabolic to geometrically finite structures leads to a general topological question: what can be said about the fibres of a continuous extension of a homeomorphism to a boundary? We prove that, under suitable assumptions, the nonempty boundary fibres are contractible.

This extends Finney's theorem on limits of injective maps to a boundary setting~\cite{MR0224087}. We state it here in abbreviated form; see Section~\ref{sec:boundary_of_homeomorphisms} for the precise meaning of the hypotheses.
\begin{thmintro}[Theorem~\ref{thm:fibres_contractible_boundary_of_homeo}]\label{thmD}
	Let $\Phi\colon A\to B$ be a continuous map between first-countable Hausdorff spaces, and let $A_{\infty}\subset A$ be a closed subset with $A_{\circ}\coloneqq A-A_{\infty}$ dense in $A$. Suppose that $\Phi\restr{A_{\circ}}$ is an open embedding, that $\Phi$ is proper onto its image $\Phi(A)$, that nonempty fibres of $\Phi\restr{A_{\infty}}$ are retracts of relatively compact neighborhoods in $A_{\infty}$. Assume moreover that:
	\begin{itemize}
		\item $A$ is ``locally a product'' near $A_{\infty}$;
		\item $\Phi$ has ``contractible image at infinity'': each $y\in\Phi(A_{\infty})$ has a basis of neighborhoods $V$ such that $V\cap\Phi(A_{\circ})$ is contractible.
	\end{itemize}	
	Then the nonempty fibres of $\Phi\restr{A_\infty}$ are contractible.
\end{thmintro}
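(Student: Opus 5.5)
Let $y\in\Phi(A_\infty)$ and $F=\Phi^{-1}(y)\cap A_\infty$. The plan is to build a neighbourhood of $F$ that deformation-retracts to $F$ and whose interior points are homotopy equivalent to a contractible set. Then we pass contractibility from that set to $F$ by a limiting argument, in the spirit of Finney's proof that limits of injective maps have cell-like fibres.

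\textbf{Step 1: compactness and a retracting neighbourhood.} Properness of $\Phi$ onto $\Phi(A)$ gives that $F$ is compact. By hypothesis there is a relatively compact neighbourhood $U$ of $F$ in $A_\infty$ with a retraction $r\colon U\to F$. We then use the local product structure: near $F$, $A$ looks like $A_\infty\times[0,1)$, with $A_\infty$ at level $0$ and $A_\circ$ corresponding to positive levels. This gives an open set $W=U\times[0,\delta)\subset A$ and a retraction $R=r\circ p\colon W\to F$, where $p$ is the projection to the first factor. Moreover $W_\circ=W\cap A_\circ\cong U\times(0,\delta)$ deformation retracts onto $W$ relative to nothing. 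The point we need is that the inclusion $W_\circ\hookrightarrow W$ composed with $R$ realises $F$ as a homotopy retract up to pushing into the interior.

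\textbf{Step 2: use contractible images.} Since $\Phi$ is proper and $F$ is the whole fibre over $y$ in $A_\infty$, we want to compare with a neighbourhood basis at $y$. In first-countable Hausdorff spaces, properness gives the following: for every neighbourhood $V$ of $y$ small enough, $\Phi^{-1}(V)\subset W$. This uses $\Phi^{-1}(y)\subset W$; note that $\Phi^{-1}(y)\cap A_\circ$ is empty or a single point, and we handle the latter by openness of $\Phi(A_\circ)$. Choose $V$ from the given basis, so that $V\cap\Phi(A_\circ)$ is contractible. Since $\Phi|_{A_\circ}$ is an open embedding, the set $\Phi^{-1}(V)\cap A_\circ\cong V\cap\Phi(A_\circ)$ is contractible.

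Now take nested basic neighbourhoods $V'\subset V$ with $\Phi^{-1}(V)\subset W$. The inclusion $\Phi^{-1}(V')\cap A_\circ\hookrightarrow\Phi^{-1}(V)\cap A_\circ$ is null-homotopic, since its target is contractible. Using the product structure, the identity of $F$ factors up to homotopy as follows: push $F$ slightly into the interior by $x\mapsto(x,t)$, which lands in $\Phi^{-1}(V')\cap A_\circ$ for $t$ small by continuity and compactness of $F$. Then include into $\Phi^{-1}(V)\cap A_\circ$, and finally apply $R$. The composite $R\circ\iota_t$ equals $r|_F=\mathrm{id}_F$, and it factors through a contractible space. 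Hence $\mathrm{id}_F$ is null-homotopic, so $F$ is contractible.

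\textbf{Main obstacle.} The delicate step is making Step 2 rigorous: we need $\Phi^{-1}(V)\cap A_\circ$ to lie inside the product chart $W$ where $R$ is defined. This requires properness onto the image, together with the Hausdorff and first-countable hypotheses, to rule out sequences in $A_\circ$ whose images converge to $y$ but which escape $W$. Such sequences would have accumulation points in $\Phi^{-1}(y)$, and every such point lies in $F\subset W$, or is the unique interior preimage, which openness excludes for $y\in\Phi(A_\infty)$ by injectivity-density considerations. We also need the push-in map to be uniform over $F$. I would verify this using compactness of $F$ and the precise definition of ``locally a product'' in Section~\ref{sec:boundary_of_homeomorphisms}.
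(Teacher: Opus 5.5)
Your proposal is correct and follows the paper's argument: push the compact fibre $F$ to a level $\tau\in T$, null-homotope it inside the contractible set $\Phi^{-1}(V)\cap A_{\circ}\cong V\cap\Phi(A_{\circ})$, then project to $A_{\infty}$ and apply the retraction, with properness and first-countability used to confine $\Phi^{-1}(V)$ near $F$. The only difference is organisational: you confine $\Phi^{-1}(V)$ to the product neighbourhood over (the interior of) the retracting neighbourhood itself, so the nullhomotopy stays where the retraction is defined, whereas the paper confines it only to a chart over $\overline{\Omega}$ and must then argue by contradiction that the nullhomotopy meets $\partial\Omega\times T$; your remark that $\Phi(A_{\circ})\cap\Phi(A_{\infty})=\emptyset$ follows from the open-embedding, density and Hausdorff hypotheses correctly replaces the map-of-triples assumption of the precise version.
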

Applied to the bending map, Theorem~\ref{thmD} gives contractibility of the fibres on each stratum with fixed parabolic locus. Since these fibres are compact real-analytic spaces, contractibility forces them to be singletons, yielding the injectivity in Theorem~\ref{thmB}.

\subsection*{Strategy}
The convex co-compact case, obtained with Schlenker in~\cite{dularschlenker2024pleating}, rests on two facts. First, fibres of $\bb$ are compact real-analytic spaces, by Bonahon's shear-bend coordinates~\cite{MR1413855} and Lecuire's properness~\cite{lecuire2025propernessbendingmap}. A contractible compact real-analytic space is a point, by the existence of a fundamental class~\cite{MR149503,MR278333}. Second, a theorem of Finney~\cite{MR0224087} shows that if a sequence of injective continuous maps between manifolds of the same dimension converges continuously, the compact fibres of the limit are contractible, as long as they are topologically ``nice'', e.g.\ \emph{absolute neighborhood retracts}. Injectivity thus follows from exhibiting $\bb$ as a continuous limit of injective maps, an ``approximation sequence''. In the convex co-compact case, this was done via approximating the convex core boundary and its bending lamination by smooth surfaces foliating the end and their third fundamental forms, respectively, using results of Labourie~\cite{MR1125669,MR1163450} and Schlenker~\cite{MR2208419}. However, these results are not known in the geometrically finite case, hence one must take a different route.

For Theorem~\ref{thmA}, the approximation sequence is given by hyperbolic Dehn filling. Filling the rank-two cusps of $M$ along slopes $\mathbf{s}^n\to\infty$ produces manifolds $M_n$ without rank-two cusps, and sending $\sigma\in\cMP(M)$ to the convex co-compact structure on $M_n$ with the same conformal boundary defines homeomorphisms $F_n\colon\cMP(M)\to\cCC(M_n)$. The maps $\bb\circ F_n$ are injective by~\cite{dularschlenker2024pleating} (away from the Fuchsian locus), and they converge continuously to $\bb$ (Proposition~\ref{prop:continuous_convergence_drilling}) when the slopes tend to infinity. The ingredients are the Brock--Bromberg drilling theorem~\cite{MR2079598} (see also~\cite{comar1996hyperbolic,MR4466646,MR4468992}), which implies strong convergence $F_n(\sigma_n)\to\sigma$ whenever $\sigma_n\to\sigma$, and an adaptation of Lecuire's proof of continuity of the bending map~\cite{MR2464096} to sequences of pleated surfaces in varying manifolds.

For Theorem~\ref{thmB}, one cannot use the same strategy as there is no ``canonical'' approximation sequence to create rank-one cusps. However, the structures with a given parabolic locus $P$ lie in a stratum of the boundary of minimally parabolic ones, on which $\bb$ is already known to be a homeomorphism (by \cite{dularschlenker2024pleating} or by Theorem~\ref{thmA}). The main ingredient is then Theorem~\ref{thmD}. Applied to the bending map on $\cMP^{\nf}(M)\cup\cGF^{\nf}(M;P)$, it yields contractibility of the fibres over the stratum with parabolic locus $P$, and real-analyticity concludes as before. The two main hypotheses to check are the following:
\begin{enumerate}
	\item A \emph{local product structure} on the domain (\S~\ref{sec:conformal_parameterization_of_geometrically_finite_structures}): it follows from the conformal parameterization of $\cGF(M)$, endowed with the strong topology, by the union of the strata of the augmented Teichm\"uller space of $\partial M$ indexed by pinchable multicurves (Theorem~\ref{thm:conformal_boundary_map_homeo}), combined with extended Fenchel--Nielsen coordinates. This parameterization is known in the incompressible boundary case~\cite{MR3134412}. For the compressible boundary case, we include a proof that combines the arguments of Anderson--Lecuire~\cite{MR3134412} and Lecuire~\cite{lecuire2025propernessbendingmap}. 
	\item A \emph{contractibility at infinity} condition (\S~\ref{sec:contractibility_at_infinity_geometrically_finite_structures}): this follows using Dehn--Thurston coordinates, the shape of tubular neighborhoods (Proposition~\ref{prop:contractibility_at_infinity_geometrically_finite}) and results of Lecuire~\cite{lecuire2025propernessbendingmap}.
\end{enumerate}

\subsection*{Geometrically infinite structures}
For structures with geometrically infinite ends, the bending lamination of the convex core boundary only sees the geometrically finite ends. The natural parameters combine bending laminations of the geometrically finite ends with ending laminations of the geometrically infinite ones. Realizability of such data for Kleinian surface groups was established by Baba--Ohshika~\cite{MR4651897}. Injectivity in the case of Kleinian surface groups with exactly one geometrically infinite end is treated in~\cite{dular2025bending}. The strategy of Theorem~\ref{thmB}, based on Theorem~\ref{thmD}, suggests an approach to the general case. Work toward such an extension is ongoing.

\subsection*{Organization}
Section~\ref{sec:background} collects background on the various deformation spaces of hyperbolic structures, on pleated surfaces and on bending laminations. Section~\ref{sec:boundary_of_homeomorphisms} proves the topological Theorem~\ref{thmD}. Section~\ref{sec:minimally_parabolic_structures} proves Theorem~\ref{thmA} and Section~\ref{sec:geometrically_finite_structures} proves Theorem~\ref{thmB}, ending with the case of punctured surface groups (Corollary~\ref{corC}).

\subsection*{Acknowledgements}
I wish to thank Kenneth Bromberg, Tommaso Cremaschi, Cyril Lecuire, Roman Prosanov, Jean-Marc Schlenker and Gabriele Viaggi for helpful discussions and comments. I was supported by FNR AFR grant 18890152 during the preparation of this paper.

\subsection*{AI use}
I used AI models to help draft and structure parts of the introduction and background. I also used them to proofread the manuscript and suggest improvements to its clarity. The original mathematical ideas, results, and proofs are my own.

\section{Background}\label{sec:background}
\subsection{Hyperbolic manifolds and Kleinian groups}\label{sec:hyperbolic_manifolds_and_klein_groups}
Let $M=\Int\overline{M}$, where $\overline{M}$ is a compact, oriented, hyperbolizable 3-manifold. As in the introduction, $\partial M$ denotes the union of the non-toroidal boundary components, which we assume to be nonempty, and $T$ denotes the union of the toroidal components.

A complete hyperbolic metric $\sigma$ on $M$ has a discrete faithful holonomy representation $\rho_{\sigma}\colon\pi_1(M)\to\PSL(2,\C)$, defined up to conjugacy. Write $\Gamma_{\sigma}=\rho_{\sigma}(\pi_1(M))$ and $M_{\sigma}=\HH^3/\Gamma_{\sigma}$, and denote its limit set and domain of discontinuity in $\CP^1$ by $\Lambda(\sigma)$ and $\Omega(\sigma)$, respectively. The convex core and conformal boundary are
\[
    C(\sigma)=\CH(\Lambda(\sigma))/\Gamma_{\sigma}, \qquad \partial_{\infty}\sigma=\Omega(\sigma)/\Gamma_{\sigma},
\]
where $\CH(\Lambda(\sigma))$ denotes the convex hull of $\Lambda(\sigma)$ inside $\HH^3$. See~\cite{MR1638795,MR3586015}.

We call $\sigma$ \emph{Fuchsian} if $C(\sigma)$ is a totally geodesic surface with possibly empty geodesic boundary. Otherwise, $\partial C(\sigma)$ is a locally convex pleated surface~\cite{MR0903850,MR0903852}, naturally identified up to isotopy with $\partial_{\infty}\sigma$ through the nearest-point retraction. Its components face the geometrically finite ends of $M_{\sigma}$.

The structure $\sigma$ is \emph{convex co-compact} if $C(\sigma)$ is compact, and \emph{geometrically finite} if, for sufficiently small $\epsilon>0$, its truncated convex core $C(\sigma)^{\geq\epsilon}$ is compact~\cite{MR1218098}. Here $C(\sigma)^{\geq\epsilon}$ denotes the intersection of $C(\sigma)$ with the $\epsilon$-thick part of $M_{\sigma}$. In the geometrically finite case, $\partial_{\infty}\sigma$ is a finite union of hyperbolic surfaces of finite area. Structures which are not geometrically finite are called \emph{geometrically infinite}.

\subsection{Deformation spaces~: isotopy versus homotopy}\label{sec:deformation_spaces}
Let $\cI(M)$ denote the set of hyperbolic structures on $M$, i.e.\ isotopy classes of complete hyperbolic metrics on $M$. Two metrics $\sigma_1$ and $\sigma_2$ are \emph{isotopic} if there exists a self-diffeomorphism $f$ of $M$ isotopic to the identity such that $f^*\sigma_2=\sigma_1$. Equivalently, $\cI(M)$ is the set of pairs $(N,f)$ where $N$ is a hyperbolic manifold and $f\colon M\to N$ is an orientation-preserving homeomorphism, up to the following equivalence relation~: $(N_1,f_1)\sim (N_2,f_2)$ if there exists an isometry $g\colon N_1\to N_2$ such that $g\circ f_1$ is isotopic to $f_2$.

Fix a basepoint $x_0\in M$. Given $k>1$, $\epsilon>0$, and $R>0$, we say that two points $(N_1,f_1),(N_2,f_2)\in\cI(M)$ are $(k,\epsilon,R)$-close if there exists a diffeomorphism $g\colon N_1\to N_2$ such that $g\circ f_1$ is isotopic to $f_2$ and $g$ restricted to the $R$-ball around $f_1(x_0)$ is a $(k,\epsilon)$-quasi-isometry into its image. This defines the \emph{marked pointed Gromov--Hausdorff topology}, or \emph{strong topology} on $\cI(M)$, where a basis of neighborhoods is obtained by taking $k\searrow 1$, $\epsilon\searrow 0$, and $R\nearrow +\infty$. This topology does not depend on the choice of basepoint. We denote by $\cSI(M)$ the set $\cI(M)$ endowed with this topology.

Upon replacing \emph{isotopy} by \emph{homotopy} in the above definitions, one obtains the set $\cH(M)$ of hyperbolic structures on $M$ up to homotopy and the space $\cSH(M)$, once endowed with the strong topology. The holonomy map identifies $\cSH(M)$ with a subspace of $\cSH(\pi_1(M))$, corresponding to those (conjugacy classes of) discrete and faithful representations of $\pi_1(M)$ into $\PSL(2,\C)$ that are induced by a homeomorphism from $M$ to the quotient manifold.

Let $\Mod(M)$ denote the group of isotopy classes of orientation-preserving self-homeomorphisms of $M$, and $\Mod_0(M)\subseteq\Mod(M)$ the subgroup of classes of homeomorphisms homotopic to the identity. The group $\Mod(M)$ acts on $\cI(M)$ by precomposition of the marking, $[N,f]\cdot[\phi]\coloneqq[N,f\circ\phi]$, and the quotient of $\cI(M)$ by $\Mod_0(M)$ is $\cH(M)$. Restricting homeomorphisms to the boundary of $\overline{M}$ gives a homomorphism $\Mod(M)\to\Mod(\partial M)$ (see~\cite{MR224099,MR840832} and item (3) of the theorem below) hence an action of $\Mod(M)$ on $\cT(\partial M)$ and on its augmented version $\augmTeich(\partial M)$ (see \S~\ref{sec:augmented_teichmuller_space}). The following is a recollection of classical results on the homeomorphisms of irreducible manifolds with boundary together with tameness of $3$-manifolds~\cite{MR224099,MR326737,MR825931,MR840832,agol2004tamenesshyperbolic3manifolds,MR2188131}.
\begin{theorem}\label{thm:forgetful_map}
	Let us consider the forgetful map $F\colon\cSI(M)\to\cSH(M)$ which sends the isotopy class of a hyperbolic metric to its homotopy class.
	\begin{enumerate}
		\item Each fibre of $F$ is in one-to-one correspondence with the group $\Mod_0(M)$ of isotopy classes of orientation-preserving self-homeomorphisms of $M$ homotopic to the identity.
		\item The group $\Mod_0(M)$ is generated by Dehn twists about compression disks, in particular it is trivial if $M$ has incompressible boundary.
		\item A hyperbolic structure $[N,f]\in\cSI(M)$ induces a homeomorphism $\partial f\colon \partial \overline{M}\xrightarrow{\cong}\partial\overline{N}$, well-defined up to isotopy, i.e.\ a \emph{marking} of the boundary.
	\end{enumerate}
\end{theorem}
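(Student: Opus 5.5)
This statement collects classical results, so the plan is to assemble them rather than prove anything from scratch. The argument will go item by item, working in the pair model $(N,f)$ of $\cI(M)$ and using tameness~\cite{agol2004tamenesshyperbolic3manifolds,MR2188131}: every $N$ is the interior of a compact manifold $\overline{N}$, and $f$ may be chosen to extend to a homeomorphism $\overline{M}\to\overline{N}$.

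\textbf{Item (1).} Two classes $[N_1,f_1]$ and $[N_2,f_2]$ with the same image under $F$ admit an isometry $g\colon N_1\to N_2$ with $g\circ f_1\simeq f_2$. After replacing $(N_1,f_1)$ by the equivalent pair $(N_2,g\circ f_1)$, I may assume $N_1=N_2=N$. Then $\phi\coloneqq f_1^{-1}\circ f_2$ is a self-homeomorphism of $M$ homotopic to the identity, and $[N,f_2]=[N,f_1]\cdot[\phi]$. So $\Mod_0(M)$ acts transitively on each fibre.

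For freeness, suppose $[N,f\circ\phi]=[N,f]$. Then there is an isometry $g$ of $N$ with $g\circ f\circ\phi\simeq f$ up to isotopy. Since $f\circ\phi\simeq f$, the map $g$ is homotopic to the identity. I then invoke the rigidity fact that an isometry of a complete hyperbolic manifold with nonabelian fundamental group that is homotopic to the identity is the identity: it induces an inner automorphism on $\pi_1$, hence commutes with the holonomy up to conjugation, and so it is trivial. The abelian case is excluded, or trivial, since $\partial M\neq\emptyset$ is non-toroidal. Hence $\phi$ is isotopic to the identity, and the action is free.

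\textbf{Item (2).} This is McCullough--Miller's theorem~\cite{MR840832}, refining Johannson~\cite{MR825931} and Waldhausen~\cite{MR224099}: for compact irreducible $\overline{M}$, the group of homeomorphisms homotopic to the identity is generated, modulo isotopy, by twists on compression disks, together with possibly annulus and torus twists. The twists along essential annuli or tori that are homotopic to the identity only occur in Seifert-fibred pieces. For hyperbolizable $M$ these are either absent or reduce, via the characteristic submanifold, to twists that are isotopic to the identity or to disk twists. If $\partial M$ is incompressible there are no disks, and Waldhausen's theorem says homotopic homeomorphisms are isotopic, so $\Mod_0(M)$ is trivial.

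\textbf{Item (3).} With the extension $\bar f\colon\overline{M}\to\overline{N}$ from tameness, the restriction $\partial\bar f$ is the marking. For well-definedness, two choices of $f$ in the same class differ by an isometry composed with an isotopy of $M$. An isotopy of the interior extends to the compactification up to isotopy, because compact cores and product collars are unique up to isotopy (uniqueness of the compactification). Hence $\partial\bar f$ is well-defined up to isotopy.

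The main obstacle I expect is item (2) in the compressible case: checking that annulus twists do not contribute beyond disk twists for hyperbolizable $M$. For that step I would simply cite~\cite{MR840832} rather than reprove it.
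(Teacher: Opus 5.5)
Your approach matches the paper's. There the statement is a recollection of classical results (Waldhausen, Scott, McCullough--Miller--Swarup, McCullough--Miller, tameness) with no argument beyond the citations, and your items (2) and (3) rely on citations at the same level.

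One step of item (1) is false as written. You claim that an isometry of a complete hyperbolic 3-manifold with nonabelian $\pi_1$ that is homotopic to the identity must be the identity. This fails in a case the paper explicitly allows. Take $\Gamma\subset\PSL(2,\R)$ Fuchsian. The reflection $(z,t)\mapsto(\bar z,t)$ of the upper half-space commutes with $\Gamma$, so it descends to an orientation-reversing isometry of $\HH^3/\Gamma$ fixing the totally geodesic surface. It acts trivially on $\pi_1$, so it is homotopic to the identity because the manifold is aspherical, yet it is not the identity. Your own justification shows where the claim breaks: centralizing the holonomy only forces the lift to fix the limit set pointwise, and an anti-M\"obius map can do this when $\Lambda$ lies in a round circle.

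The repair uses information you already have. In the freeness step, $g\circ(f\circ\phi)$ is isotopic to $f$, and both $f\circ\phi$ and $f$ are orientation-preserving homeomorphisms, so $g$ preserves orientation. A lift of $g$, composed with a suitable deck transformation, is then an orientation-preserving isometry centralizing the non-elementary group $\Gamma$. It therefore fixes every loxodromic fixed point, hence at least three points of $\CP^1$, hence it is trivial.

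Your transitivity step needs the same orientation check. The map $f_1^{-1}\circ f_2$ lies in $\Mod_0(M)$ only because $g$ is orientation-preserving. This holds because $\cSH(M)$ is defined through $\PSL(2,\C)$-conjugacy of holonomies.

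There is also a small citation slip in item (2): \cite{MR825931} is McCullough--Miller--Swarup on uniqueness of cores, not Johannson; Johannson's work is \cite{MR551744}.
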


\subsection{Cusps, parabolic locus and pared manifolds}\label{sec:cusps}
Fix a Margulis constant $\epsilon_0>0$. Components of the $\epsilon_0$-thin part are tubes about short closed geodesics or cusp neighborhoods with fundamental group $\Z$ or $\Z^2$, called rank-one or rank-two cusps, respectively \cite[\S 3.3]{MR3586015}. Rank-two cusps correspond to the components of $T$ and their presence depends only on the topology of $M$.

For a geometrically finite structure $\sigma$, the rank-one cusps determine a simple multicurve $P_{\sigma}\subset\partial M$, called its \emph{parabolic locus}~\cite{MR758464,MR2553578}. The boundary marking gives a homeomorphism
\[
    \partial_{\infty}\sigma\cong\partial M-P_{\sigma}
\]
up to isotopy, with each component of $P_{\sigma}$ giving rise to two punctures. A geometrically finite structure is \emph{minimally parabolic} if $P_{\sigma}=\emptyset$.

The pairs $(\overline{M},P)$ arising in this way are described by the notion of \emph{pared manifold}, introduced by Thurston~\cite{MR648524}, see Morgan~\cite{MR758464}. For a simple multicurve $P\subset\partial M$, let $N(P)\subset\partial M$ denote a regular neighborhood of $P$, which is a disjoint union of annuli.
\begin{definition}[Pinchable multicurves]\label{def:pinchable}
	Let $P\subset\partial M$ be a simple multicurve. The pair $(\overline{M},N(P)\cup T)$ is a \emph{pared manifold} if
	\begin{enumerate}
		\item every component of $N(P)$ is an incompressible annulus, i.e.\ every component of $P$ is homotopically non-trivial in $\overline{M}$~;
		\item every rank-two abelian subgroup of $\pi_1(\overline{M})$ is conjugate into the fundamental group of a component of $T$~;
		\item every $\pi_1$-injective map of pairs $(\bS^1\times[0,1],\bS^1\times\{0,1\})\to(\overline{M},N(P)\cup T)$ is homotopic, relative to $\bS^1\times\{0,1\}$, to a map into $N(P)\cup T$.
	\end{enumerate}
	We say that $P$ is \emph{pinchable} if $(\overline{M},N(P)\cup T)$ is a pared manifold.
\end{definition}
From condition (3) it follows that the annuli of $N(P)\cup T$ are pairwise non-parallel in $\overline{M}$~: no two components of $P$ are freely homotopic in $\overline{M}$, and no component of $P$ is freely homotopic into a component of $T$. Note that, if $P$ is pinchable, so is any sub-multicurve of $P$.

\begin{theorem}[Thurston's hyperbolization theorem for pared manifolds~\cite{MR648524,MR758464,MR2553578,MR3586015}]\label{thm:pared_hyperbolization}
	A simple multicurve $P\subset\partial M$ is the parabolic locus of some geometrically finite structure on $M$ if and only if it is pinchable.
\end{theorem}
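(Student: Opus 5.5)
This theorem is a recollection of classical results, so I will prove it by reduction to Thurston's hyperbolization theorem for pared manifolds (as presented by Morgan) and to the standard structure theory of geometrically finite ends. There are two directions.

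\emph{Necessity.} Let $\sigma$ be geometrically finite with parabolic locus $P=P_\sigma$. I would remove from $M_\sigma$ the $\epsilon_0$-thin cusp neighborhoods and truncate the convex core (or a slight thickening of it, to obtain a smooth compact core). This yields a compact manifold homeomorphic to $\overline{M}$. Its boundary contains annuli $N(P)$ coming from the rank-one cusps and tori $T$ coming from the rank-two cusps. I would then check the three conditions of Definition~\ref{def:pinchable} in order.
\begin{enumerate}
\item Each core curve of $N(P)$ is a parabolic element of $\Gamma_\sigma$, so it is nontrivial; hence each annulus is incompressible.
\item In a discrete torsion-free group, every $\Z^2$ subgroup is parabolic, so it fixes a point at infinity and is conjugate into a rank-two cusp subgroup, i.e.\ into $\pi_1$ of a component of $T$.
\item Let $A$ be an essential annulus with boundary in $N(P)\cup T$. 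Its boundary curves are parabolic and freely homotopic, hence conjugate into the same maximal parabolic subgroup. Discreteness forces both ends into the same cusp, and the annulus can then be homotoped into that cusp's boundary annulus or torus.
\end{enumerate}
Some care is needed to identify the marking via Theorem~\ref{thm:forgetful_map}(3), so that $P$ is really the given multicurve in $\partial M$ up to isotopy.

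\emph{Sufficiency.} Suppose $P$ is pinchable. Then $(\overline{M},N(P)\cup T)$ is a pared manifold. Since $\partial M\neq\emptyset$, the complement $\partial\overline{M}-(N(P)\cup T)$ is nonempty, so the pared manifold is not a fibred/closed exceptional case. Thurston's theorem then gives a geometrically finite hyperbolic structure on $\Int\overline{M}$ whose cusps are exactly $N(P)\cup T$. Thus the rank-one cusps correspond precisely to the components of $P$, and $P_\sigma=P$.

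The main obstacle is the condition on $\partial\overline M - P$ when $P$ meets compressible boundary components. Thurston's theorem is usually stated requiring $\partial\overline{M}-N(P)$ to consist of surfaces with negative Euler characteristic, and possibly incompressible ones in some formulations. I would handle this as follows:
\begin{itemize}
\item Use the version in Kapovich's book, which allows compressible boundary: it gives a convex cocompact structure relative to the pared locus via the Klein--Maskit combination along compressing disks.
\item Condition (3) excludes components of $\partial M-N(P)$ that are annuli, since such an annulus would make two components of $P$ parallel.
\item Condition (1) excludes disk components.
\end{itemize}
Hence all complementary pieces are hyperbolic, and the structure obtained is geometrically finite with parabolic locus $P$.
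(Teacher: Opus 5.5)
Your proposal is correct and follows the paper's route. The paper gives no argument of its own and simply invokes Thurston's hyperbolization theorem for pared manifolds as presented by Morgan, Kapovich and Marden; your necessity check via the relative compact core and thick--thin decomposition, together with your verification that $\partial M-N(P)$ has only components of negative Euler characteristic (disk and annulus components are in fact already excluded because $P$ is a simple multicurve with essential, pairwise non-isotopic components), is the standard unpacking of that citation.
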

In terms of bending laminations (Section~\ref{sec:bending_lamination}), the fact that components of $P_{\sigma}$ are pairwise non-parallel is a special case of the annulus condition of Theorem~\ref{thm:realizability_criterion}.

Let $\cMP(M)$ and $\cGF(M)$ denote the subsets of $\cI(M)$ consisting of minimally parabolic and geometrically finite structures, respectively. If $\pi_1(M)$ has no abelian subgroup of rank-two, then minimally parabolic structures are exactly convex co-compact structures, forming the subset $\cCC(M)$ of $\cI(M)$. For a pinchable multicurve $P\subset\partial M$, we denote by
\begin{equation*}
	\cGF(M;P)\subset\cGF(M)
\end{equation*}
the set of geometrically finite structures on $M$ with parabolic locus $P$, so that $\cGF(M;\emptyset)=\cMP(M)$ and $\cGF(M)$ is the disjoint union of the $\cGF(M;P)$ over pinchable multicurves $P$. We also write $\cGF(M;\emptyset,P)\coloneqq\cGF(M;\emptyset)\cup\cGF(M;P)$. For any of these spaces, we add the superscript $\nf$ to denote the subset of non-Fuchsian structures.

Unless mentioned otherwise, all these spaces are endowed with the strong topology. By the density theorem~\cite{MR2079598,MR2821565,MR3001608}, $\cMP(M)$ is dense in $\cSI(M)$.

\subsection{Quasi-isometric deformations}\label{sec:quasi_isometric_deformations}
Let $\cQI(M)$ denote $\cI(M)$ with the quasi-isometric topology: closeness is defined by diffeomorphisms respecting the isotopy markings which are $(k,\epsilon)$-quasi-isometries, with $k\searrow1$ and $\epsilon\searrow0$. The connected component containing $\sigma$ is denoted $\cQI_0(\sigma)$ and is its \emph{quasi-conformal deformation space}. Replacing isotopy markings by homotopy markings gives the spaces $\cQH(M)$ and $\cQH_0(\sigma)$.

We use the following form of simultaneous uniformization, due to the works of Ahlfors, Bers, Kra, Marden, Maskit and Sullivan. See~\cite{MR111834,MR1049503}, \cite[\S 5.3.2]{MR1638795} and \cite[\S 5.1.2]{MR3586015}. For a parabolic locus $P$, let $\Mod_0(M;P)\subseteq\Mod_0(M)$ be the subgroup preserving $P$ componentwise.

\begin{theorem}[Simultaneous uniformization]\label{thm:simultaneous_uniformization}
	Let $\sigma\in\cI(M)$ be a hyperbolic structure on $M$ and let $P$ be its parabolic locus. Sending a structure $\tau\in\cQI_0(\sigma)$ to the induced quasi-conformal deformation of the conformal boundary induces a homeomorphism
	\begin{equation}
		\cQI_0(\sigma)\longrightarrow \cT(\partial_{\infty}\sigma)
	\end{equation}
	which covers a homeomorphism
	\begin{equation}
		\cQH_0(\sigma)\longrightarrow \cT(\partial_{\infty}\sigma)/\Mod_0(M;P).
	\end{equation}
	Moreover, once endowed with their natural complex analytic structures, these homeomorphisms are biholomorphic and the maps $\cQI_0(\sigma)\to\cQH_0(\sigma)$ and $\cT(\partial_{\infty}\sigma)\to\cT(\partial_{\infty}\sigma)/\Mod_0(M;P)$ are holomorphic coverings.
\end{theorem}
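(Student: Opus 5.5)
This is the classical simultaneous uniformization theorem, so the plan is to assemble it from the Ahlfors--Bers measurable Riemann mapping theorem together with Marden--Sullivan rigidity. We work in the model $M_\sigma=\HH^3/\Gamma_\sigma$.

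\textbf{Step 1: the map to $\cT(\partial_{\infty}\sigma)$.} A structure $\tau\in\cQI_0(\sigma)$ is represented by a marked quasi-isometry $M_\sigma\to M_\tau$. Lifting it and extending to the sphere at infinity gives a $\Gamma_\sigma$-equivariant quasi-conformal homeomorphism $w$ of $\CP^1$ conjugating $\Gamma_\sigma$ to $\Gamma_\tau$. Restricted to $\Omega(\sigma)$, this descends to a quasi-conformal map $\partial_\infty\sigma\to\partial_\infty\tau$. Its isotopy class is determined by the isotopy marking, and it sends punctures to punctures, since parabolics go to parabolics. This defines a point of $\cT(\partial_\infty\sigma)$.

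\textbf{Step 2: surjectivity and continuity of the inverse.} Let $X\in\cT(\partial_\infty\sigma)$. Choose a Beltrami differential on $\partial_\infty\sigma$ representing $X$, lift it to a $\Gamma_\sigma$-invariant Beltrami differential $\mu$ on $\Omega(\sigma)$, and set $\mu=0$ on $\Lambda(\sigma)$. The normalized solution $w^\mu$ of the Beltrami equation conjugates $\Gamma_\sigma$ to a Kleinian group. By Sullivan's theorem that $\Lambda(\sigma)$ carries no invariant line field, this can also be arranged in the geometrically infinite case.

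The equivariant quasi-conformal map $w^\mu$ extends (Douady--Earle or Tukia) to an equivariant quasi-isometry of $\HH^3$. This gives a point of $\cQI_0(\sigma)$, and holomorphic dependence of $w^\mu$ on $\mu$ gives a continuous, in fact holomorphic, inverse. In the homotopy-marked setting, different choices of quasi-isometric marking correspond to the same point up to the action of $\Mod_0(M;P)$.

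\textbf{Step 3: injectivity (the main obstacle).} Suppose $\tau_1$ and $\tau_2$ have the same image in $\cT(\partial_\infty\sigma)$. Composing the boundary maps gives an equivariant quasi-conformal map between the two groups. It can be isotoped on $\Omega$ to be conformal. Since the limit set carries no invariant Beltrami differential (Sullivan; Ahlfors measure zero in the geometrically finite case), the map is Möbius. It therefore induces an isometry $M_{\tau_1}\to M_{\tau_2}$.

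The delicate point is compatibility with the \emph{isotopy} marking rather than just the homotopy marking. The isometry is homotopic to the composite marking. We still need that it is isotopic to it, given that it is isotopic on the boundary. I would argue this with Theorem~\ref{thm:forgetful_map}: elements of $\Mod_0(M)$ are detected by their action on $\partial M$, being products of Dehn twists on compression disks. Hence agreement of boundary markings forces agreement of isotopy classes. This yields a bijection onto $\cT(\partial_\infty\sigma)$.

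\textbf{Step 4: covering statement and complex structure.} The group $\Mod_0(M;P)$ acts freely and properly discontinuously on $\cT(\partial_\infty\sigma)$, as a subgroup of the mapping class group acting on the boundary. It is also compatible with the fibres of $\cQI_0(\sigma)\to\cQH_0(\sigma)$ via Theorem~\ref{thm:forgetful_map}(1). So the homeomorphism descends, and both quotient maps are holomorphic coverings.

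The complex structure on $\cQH_0(\sigma)$ comes from the character variety. Holomorphic dependence of the holonomy on $\mu$ (Ahlfors--Bers) makes the map holomorphic. A holomorphic bijection between complex manifolds is then biholomorphic.
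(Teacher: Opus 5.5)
The paper does not prove this statement; it quotes it as classical (Ahlfors, Bers, Kra, Marden, Maskit, Sullivan). Your Steps 1--2 and the analytic half of Step 3 follow the standard argument behind those references.

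\textbf{The gap is in the step you yourself flag as delicate.} Theorem~\ref{thm:forgetful_map} says that $\Mod_0(M)$ acts freely on $\cI(M)$ with quotient $\cH(M)$, and that it is generated by disk twists. Knowing that each generator acts nontrivially on $\partial M$ says nothing about whether some nontrivial product of generators acts trivially. So the inference ``being products of Dehn twists on compression disks, elements of $\Mod_0(M)$ are detected on $\partial M$'' is a non sequitur.

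Moreover, the boundary data you actually control is not on $\partial M$. Your argument produces $\tau_2=\tau_1\cdot\psi$ with $\psi\in\Mod_0(M;P)$ whose restriction is isotopic to the identity on the \emph{conformal} boundary $\partial_\infty\sigma\cong\partial M-P$. Dehn twists about components of $P$ become trivial on $\partial M-P$ and act trivially on $\cT(\partial_\infty\sigma)$, so even faithfulness on $\partial M$ would not be enough.

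What you need is that the restriction map $\Mod_0(M;P)\to\Mod(\partial M-P)$ is injective. This is genuine 3-manifold topology, with two parts:
\begin{enumerate}
\item A homeomorphism of $M$ that is homotopic to the identity and isotopic to the identity on $\partial M$ is isotopic to the identity (results of Waldhausen/McCullough--Miller type, using irreducibility and atoroidality).
\item No nontrivial element of $\Mod_0(M;P)$ restricts on $\partial M$ to a product of twists about components of $P$. This is exactly where the pared conditions of Definition~\ref{def:pinchable} enter: incompressibility of the components of $P$, and no two of them freely homotopic in $M$.
\end{enumerate}

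The same issue propagates to Step 4. You cannot regard $\Mod_0(M;P)$ as a subgroup of the mapping class group of $\partial M-P$ until faithfulness is known. Even then, subgroups of mapping class groups act properly discontinuously but not necessarily freely, since finite-order elements fix points of Teichm\"uller space. Freeness should instead be deduced from injectivity of $\cQI_0(\sigma)\to\cT(\partial_\infty\sigma)$, its equivariance, and Theorem~\ref{thm:forgetful_map}(1).

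Smaller points:
\begin{itemize}
\item Sullivan's no-invariant-line-field theorem is needed for injectivity, not surjectivity; setting $\mu=0$ on $\Lambda(\sigma)$ always works.
\item The equivariant isotopy to a conformal map on $\Omega$ in Step 3 must be bounded, so that the glued map still agrees with the original one on $\Lambda$ and is quasi-conformal.
\item Continuity for the quasi-isometric topology comes from Douady--Earle/Tukia extensions whose bilipschitz constants tend to $1$ with the dilatation (compare Corollary~\ref{cor:simultaneous_uniformization_equicontinuous}). Holomorphic dependence of $w^\mu$ on $\mu$ only controls the holonomy.
\end{itemize}
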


The group $\Mod_0(M;P)$ acts on $\cT(\partial_{\infty}\sigma)$ via the boundary marking, that is, as Dehn twists about compressible curves. 

\begin{remark}
	On a geometrically finite quasi-conformal deformation space, the quasi-isometric and strong topologies coincide by Marden's stability theorem~\cite[\S 5.1.2]{MR3586015}. On the corresponding homotopy-marked space $\cQH_0(\sigma)$, these topologies also agree with the algebraic topology~\cite[Theorem 4.6.2]{MR3586015}.
\end{remark}
The equivariant extension results of Douady--Earle and Tukia \cite{MR857678,MR781586} give bilipschitz extensions of quasi-conformal maps, with bilipschitz constants tending to $1$ as the quasi-conformal constants tend to $1$, independently of the Kleinian group. Applied to simultaneous uniformization, this yields the following uniform estimate. Here $d_{\Teich}$ is the Teichm\"uller distance and $d_{\Lip}$ is the logarithm of the optimal bilipschitz constant among diffeomorphisms respecting the isotopy markings.
\begin{corollary}\label{cor:simultaneous_uniformization_equicontinuous}
	There exists a \emph{modulus of continuity} $\omega$, i.e.\ an increasing function
	\begin{equation*}
		\omega\colon [0,+\infty]\to [0,+\infty]
	\end{equation*}
	with $\omega(0)=0$ and $\omega(\epsilon)\to 0$ as $\epsilon\to 0$, such that, for any hyperbolizable 3-manifold $M$ and any $\sigma\in\cI(M)$, the simultaneous uniformization homeomorphism
	\begin{equation*}
		\U\colon\left(\cT(\partial_{\infty}\sigma),d_{\Teich}\right)\to\left(\cQI_0(\sigma),d_{\Lip}\right)
	\end{equation*}
	is $\omega$-uniformly continuous:
	\begin{equation*}
		d_{\Lip}(\U(m),\U(m'))\leq\omega\left(d_{\Teich}(m,m')\right)\text{ for all }m,m'\in\cT(\partial_{\infty}\sigma).
	\end{equation*}
\end{corollary}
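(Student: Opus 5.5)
The plan is to build $\U$ from an explicit quasi-conformal map, extend it equivariantly to $\HH^3$, and read off the bilipschitz constant. Let $m,m'\in\cT(\partial_{\infty}\sigma)$ with $d_{\Teich}(m,m')=\epsilon$, and set $K=e^{2\epsilon}$.

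First, I realize the Teichm\"uller distance by a map. By Teichm\"uller's theorem there is a $K$-quasi-conformal homeomorphism $h\colon \partial_\infty\U(m)\to\partial_\infty\U(m')$ in the correct isotopy class of markings. Up to a constant factor this is the normalization $d_{\Teich}=\tfrac12\log K$; in the finite-area case with punctures, extremal maps still exist.

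Next, I pass to the sphere. Write $\U(m)=\HH^3/\Gamma$ and $\U(m')=\HH^3/\Gamma'$. Lift $h$ to $\Omega(\Gamma)$, extend it by the identity Beltrami coefficient $0$ on the limit set, and solve the Beltrami equation on $\CP^1$. This gives a $K$-quasi-conformal homeomorphism $\phi\colon\CP^1\to\CP^1$ conjugating $\Gamma$ to $\Gamma'$, after normalizing. The second group is indeed $\Gamma'$ because simultaneous uniformization (Theorem~\ref{thm:simultaneous_uniformization}) is injective: the conjugated group has conformal boundary $m'$ with the right marking. Any ambiguity in the choice of lift is absorbed into the isotopy class, since $\U$ is a homeomorphism onto $\cQI_0(\sigma)$ and markings are carried along.

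Then I extend to $\HH^3$. Apply the Douady--Earle conformally natural (barycentric) extension to $\phi$. Conformal naturality makes the extension $\Phi\colon\HH^3\to\HH^3$ equivariant with respect to $\Gamma\to\Gamma'$. By Douady--Earle and Tukia, $\Phi$ is a $L(K)$-bilipschitz diffeomorphism, where $L(K)\to1$ as $K\to1$ and $L$ depends only on $K$, not on the group. This universality is the key point, and it is exactly what the cited results provide. I then set $\omega(\epsilon)=\log L(e^{2\epsilon})$, replaced if necessary by its increasing envelope $\sup_{t\le\epsilon}$, with $\omega(+\infty)=+\infty$.

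Finally, $\Phi$ descends to a diffeomorphism $\U(m)\to\U(m')$ that respects the markings. The reason is that it is isotopic, through the equivariant isotopy given by extending the straight-line path of Beltrami differentials, to the identity marking. Hence $d_{\Lip}(\U(m),\U(m'))\le\omega(\epsilon)$.

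I expect the main obstacle to be marking compatibility in the compressible case. There, $\cQI_0$ uses isotopy rather than homotopy markings, so one must check that the descended map lies in the isotopy class dictated by $\U$ and not merely in the right homotopy class. I would handle this by continuity. The family $\Phi_t$ coming from the Beltrami coefficients $t\mu$, $t\in[0,1]$, moves continuously from the identity. Along it, the boundary markings vary continuously and match the path $\U(m_t)$ in $\cT(\partial_{\infty}\sigma)$. Since $\cQI_0(\sigma)\to\cT(\partial_{\infty}\sigma)$ is a homeomorphism, lifting this path pins down the isotopy class.
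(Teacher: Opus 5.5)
Your proposal is correct and follows the same route as the paper: realize $d_{\Teich}$ by a $K$-quasiconformal map with $K=e^{2\epsilon}$, solve the invariant Beltrami equation to conjugate the groups, and apply the Douady--Earle/Tukia equivariant extension, whose bilipschitz constant depends only on $K$ (not on the group) and tends to $1$ as $K\to 1$. The paper compresses all of this into one sentence, so your extra verification that the descended map lies in the right isotopy class is a welcome detail the paper leaves implicit. That verification works by tracking the path $t\mu$ and using that $\cQI_0(\sigma)\to\cT(\partial_\infty\sigma)$ is a homeomorphism.
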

The independence of $\omega$ from $M$ will be used in the proof of Corollary~\ref{cor:convergence_dehn_filling} to handle varying conformal boundaries along a sequence of Dehn fillings.

\subsection{Algebraic and geometric convergence}\label{sec:algebraic_and_geometric_convergence}
A sequence of representations $\rho_n\colon\pi_1(M)\to\PSL(2,\C)$ converges \emph{algebraically} to $\rho_{\infty}$ if it converges pointwise. A sequence of closed subgroups of $\PSL(2,\C)$ converges \emph{geometrically} if it converges in the Chabauty topology. We say that $\rho_n$ converges \emph{strongly} to $\rho_{\infty}$ if it converges algebraically and $\rho_n(\pi_1(M))$ converges geometrically to $\rho_{\infty}(\pi_1(M))$. For conjugacy classes, convergence means convergence of suitable representatives.

Via holonomy, the algebraic topology on $\cH(M)$ is the subspace topology that it inherits from the character variety $\cX(\pi_1(M),\PSL(2,\C))$. We denote the resulting space by $\cAH(M)$. The identity map $\cSH(M)\to\cAH(M)$ is continuous. Geometric convergence of Kleinian groups corresponds to geometric convergence of the associated framed manifolds, expressed by approximate isometries on exhausting compact subsets. See~\cite[Chapter E]{MR1219310} and \cite[Chapter 4]{MR3586015}.

The relation with the strong topology of Section~\ref{sec:deformation_spaces} is the following, see~\cite[Chapter 4]{MR3586015}~: a sequence $(\sigma_n)_{n\in\N}$ converges to $\sigma_{\infty}$ in $\cSI(M)$ if and only if the holonomies converge strongly and, for $n$ large enough, the approximate isometries $\phi_n\colon (M,\sigma_{\infty})\dasharrow (M,\sigma_n)$ can be chosen isotopic to the identity on their domain of definition, in the sense that $\phi_n\circ f_{\infty}$ is isotopic to $f_n$ on $f_{\infty}^{-1}(\mathrm{dom}\,\phi_n)$, where $f_n$ and $f_{\infty}$ denote the markings.

We shall also use strong convergence for sequences of hyperbolic manifolds whose topology may vary, following~\cite[\S 4]{MR1726737}, in which case the representations $\rho_n$ need not be faithful. This applies to the Dehn fillings considered in Section~\ref{sec:hyperbolic_dehn_filling}, where the representations are induced by the natural surjections $\pi_1(M)\epic\pi_1(M_n)$.

\subsection{Measured laminations}\label{sec:measured_laminations_and_trees}
Let $S$ be a surface of finite type endowed with a fixed ``background'' hyperbolic metric of finite area. A \emph{simple multicurve} on $S$ is a finite union of pairwise disjoint, pairwise non-isotopic, essential (i.e.\ not bounding a disk or a punctured disk) simple closed curves, considered up to isotopy. A \emph{pants decomposition} is a maximal simple multicurve. A (not necessarily simple) multicurve is \emph{filling} if every essential simple closed curve intersects it.

Let $\cGL(S)$ denote the space of geodesic laminations on $S$, with the Hausdorff topology. It is compact~\cite{MR0903850}. Let $\cML(S)$ denote the space of measured laminations on $S$, endowed with the weak-$\ast$ topology~\cite{MR3053012,MR1144770}. If $S$ has cusps, measured laminations on $S$ are assumed to have compact support. The space $\cML(S)$ is homeomorphic to an open ball of the same dimension as $\cT(S)$ and contains the space of weighted simple multicurves as a dense subset. The intersection form
\begin{equation*}
	i\colon \cML(S)\times\cML(S)\longrightarrow\R_{\geq 0}
\end{equation*}
extends continuously the geometric intersection number between simple closed curves. For $\lambda\in\cML(S)$, we denote by $\abs{\lambda}\in\cGL(S)$ its \emph{support}. A measured lamination is \emph{rational} if its support is a simple multicurve~; its \emph{closed leaves} are the components of its support which are simple closed curves and the \emph{weight} of such a leaf $c$ is the mass $\lambda(k)=i(\lambda,k)$ of a transverse arc $k$ meeting $c$ exactly once and meeting no other leaf. When $S$ is compact with boundary, we consider $\cML(S)$ as the space of measured laminations of $\Int S$ whose support is compact.
 
\subsection{Pleated surfaces}\label{sec:pleated_surfaces}
We follow Thurston~\cite{MR0903850,MR0903852}, Bonahon--Otal~\cite{MR2144972} and Lecuire~\cite{MR2464096}. A \emph{pleated map} is a continuous map $\widetilde{f}\colon\HH^2\to\HH^3$ for which there exists a geodesic lamination $\widetilde{L}\subset\HH^2$ such that $\widetilde{f}$ is a totally geodesic isometric embedding on each leaf of $\widetilde{L}$ and on each connected component of $\HH^2-\widetilde{L}$ (called the \emph{plaques} of $\widetilde{f}$). Such a map is a path-isometry and is $1$-Lipschitz. The minimal such lamination $\widetilde{L}$ is called the \emph{pleating locus} of $\widetilde{f}$ and its leaves the \emph{bending lines}.

An \emph{(abstract) pleated surface} is a triple $(\widetilde{f},G,\rho)$ where
\begin{itemize}
	\item $G$ is a discrete and torsion-free lattice in $\Isom^+(\HH^2)$,
	\item $\rho\colon G\to\Isom^{+}(\HH^3)$ is a discrete torsion-free (not necessarily faithful) representation preserving parabolic elements, and
	\item $\widetilde{f}\colon\HH^2\to\HH^3$ is a pleated map which is $G$-equivariant, i.e.\
	\begin{equation*}
		\widetilde{f}\circ a=\rho(a)\circ\widetilde{f}\quad\text{ for all }a\in G.
	\end{equation*}
\end{itemize}
The pleating locus of $\widetilde{f}$ is then $G$-invariant and descends to a geodesic lamination $L$ on the hyperbolic surface $S\coloneqq\HH^2/G$, also called the pleating locus. A \emph{pleated surface in a hyperbolic 3-manifold} $(M,\sigma)$ is a continuous map $f\colon S\to M$ from a complete hyperbolic surface $(S,h)$ of finite area which lifts to a $G$-equivariant pleated map, where $G$ is the image of the holonomy of $h$ and $\rho\coloneqq\rho_{\sigma}\circ f_{\ast}$~; conversely an abstract pleated surface descends to a pleated surface $\HH^2/G\to\HH^3/\rho(G)$.

Let $(\widetilde{f},G,\rho)$ be a pleated surface and fix an orientation on $\HH^2$. For a plaque $P$, $\widetilde{f}(P)$ is contained in a totally geodesic plane $\Pi_P$ in $\HH^3$, which divides $\HH^3$ into two closed half-spaces $H^+_P$ and $H^-_P$ (where $H^+_P$ is in the positive outward normal direction with respect to the orientation on $\Pi_P$ induced by $\HH^2$). We say that $(\widetilde{f},G,\rho)$ is \emph{convex} if the following two conditions hold, and \emph{even} if the first holds but not the second:
\begin{enumerate}
	\item There is $s\in\{+,-\}$ such that, for any plaque $P$, $\widetilde{f}(\HH^2)$ lies in $H^s_P$.
	\item The interior of
	\begin{equation*}
		C_{\widetilde{f}}\coloneqq\bigcap_{P} H^{s}_P
	\end{equation*}
	is non-empty, where $P$ ranges over all plaques.
\end{enumerate}
When $(\widetilde{f},G,\rho)$ is convex or even, $\widetilde{f}(\HH^2)$ is contained in the boundary of the convex set $C_{\widetilde{f}}$ and the amount of bending along the bending lines defines a transverse measure on the pleating locus, i.e.\ a measured lamination on $S$ called the \emph{bending (measured) lamination} of the pleated surface~\cite{MR0903852}. The closed leaves of the bending lamination have weight in $(0,\pi]$, and a leaf has weight $\pi$ exactly when the two adjacent plaques are folded onto each other, in which case $C_{\widetilde{f}}$ has empty interior and the surface is even. Such leaves are not to be confused with the later use of ``weight-$\pi$ closed leaves'' that are associated to rank-one cusps.

\begin{definition}\label{def:convergence_abstract_pleated_surfaces}
	A sequence of pleated surfaces $\{{(\widetilde{f}_n,G_n,\rho_n)}\}_{n\in\N}$ converges to a pleated surface $(\widetilde{f},G,\rho)$ if the following three conditions hold~:
	\begin{enumerate}
		\item $G_n$ converges geometrically to $G$~;
		\item for any sequence $a_n\to a$ with $a_n\in G_n$, $n\in\N$, and $a\in G$, $\rho_n(a_n)$ converges to $\rho(a)$~;
		\item $\widetilde{f}_n$ converges compactly to $\widetilde{f}$.
	\end{enumerate}
\end{definition}

If we restrict our attention to pleated surfaces $(f,G,\rho)$ of a given topological type $S$, then $G$ can be replaced by a discrete and faithful representation $r\colon\pi_1(S)\to\Isom^+(\HH^2)$. The first two conditions in Definition~\ref{def:convergence_abstract_pleated_surfaces} then reduce to: ``$r_n(\gamma)\to r(\gamma)$ and $\rho_n(r_n(\gamma))\to\rho(r(\gamma))$ for all $\gamma\in\pi_1(S)$''.

\begin{proposition}[Lecuire~\cite{MR2464096}]\label{prop:Lecuire_continuity_bending_lamination_if_pleated_surfaces_converge}
	Let $\{(\widetilde{f}_n,\Gamma_n,\rho_n)\}_{n\in\N}$ be a sequence of convex pleated surfaces converging to a convex pleated surface $(\widetilde{f},\Gamma,\rho)$. Let $(\widetilde{\lambda}_n)_{n\in\N}$ and $\widetilde{\lambda}$ be their bending laminations. Then $\widetilde\lambda_n\to\widetilde\lambda$.
\end{proposition}

\begin{lemma}[Bonahon--Otal~\cite{MR2144972}, Lecuire~\cite{MR2464096}]\label{lem:convex_or_even_pleated_surfaces_closed}
	The set of pleated surfaces that are convex or even is closed in the set of pleated surfaces.
\end{lemma}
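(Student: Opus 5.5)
We need to show that a limit $(\widetilde f,G,\rho)$ of convex or even pleated surfaces $(\widetilde f_n,G_n,\rho_n)$, in the sense of Definition~\ref{def:convergence_abstract_pleated_surfaces}, is itself convex or even. Since even means condition (1) without condition (2), it suffices to prove that condition (1) passes to the limit. The plan is to pass to subsequences and transfer the half-space condition from plaques of $\widetilde f_n$ to plaques of $\widetilde f$ via the compact convergence $\widetilde f_n\to\widetilde f$.

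\textbf{Step 1: fixing the sign and the pleating loci.} After a subsequence, the sign $s_n\in\{+,-\}$ in condition (1) is constant, say $s$. By compactness of the space of geodesic laminations with the Hausdorff topology (applied on compact subsets of $\HH^2$, using the geometric convergence $G_n\to G$), we may also assume that the pleating loci $\widetilde L_n$ converge in the Hausdorff sense to a closed geodesic lamination $\widetilde L_\infty\subset\HH^2$. The key local claim is that the pleating locus $\widetilde L$ of $\widetilde f$ is contained in $\widetilde L_\infty$. Indeed, if $U\subset\HH^2$ is an open set disjoint from $\widetilde L_\infty$, then for $n$ large $U$ lies in a single plaque of $\widetilde f_n$ on each compact piece. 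So $\widetilde f_n\restr{U}$ is a totally geodesic isometric embedding, and hence so is the limit $\widetilde f\restr{U}$.

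\textbf{Step 2: transferring the half-space condition.} Let $P$ be a plaque of $\widetilde f$ and pick a small open triangle $\Delta\subset P$ away from $\widetilde L_\infty$. This is possible because $\widetilde L_\infty$ has empty interior and measure zero, and because $P-\widetilde L_\infty$ is open and nonempty. For $n$ large, $\Delta$ lies in a plaque $P_n$ of $\widetilde f_n$, so the planes $\Pi_{P_n}\supset\widetilde f_n(\Delta)$ converge to the plane $\Pi_P\supset\widetilde f(\Delta)$, with the orientations induced by $\HH^2$. It follows that the closed half-spaces $H^s_{P_n}$ converge to $H^s_P$ in the Hausdorff sense on compacta. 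For any $x\in\HH^2$ we have $\widetilde f_n(x)\in H^s_{P_n}$ and $\widetilde f_n(x)\to\widetilde f(x)$, so closedness gives $\widetilde f(x)\in H^s_P$. Hence $\widetilde f(\HH^2)\subset H^s_P$ for every plaque $P$, which is condition (1). Equivariance of $\widetilde f$ under $\rho$ is already part of the limit being a pleated surface, so the limit is convex or even.

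\textbf{Main obstacle.} The delicate point is Step 1: ruling out that a plaque of the limit is an accumulation of thinner and thinner plaques of $\widetilde f_n$ in a way that prevents choosing $\Delta$. If $P-\widetilde L_\infty$ were empty, $P$ would be contained in the lamination $\widetilde L_\infty$, which is impossible since $P$ is open and geodesic laminations are nowhere dense. So the argument only uses that Hausdorff limits of geodesic laminations are geodesic laminations.

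A second subtlety is that even surfaces are folded, with plaques possibly mapped onto the same plane with opposite orientations. This is why we work with the orientation-induced sign $s$ rather than with the convex set $C_{\widetilde f}$ directly. One also has to check that the limiting orientation of $\Pi_P$ is the one induced from $\HH^2$; this holds because the convergence is of maps on the oriented triangle $\Delta$.
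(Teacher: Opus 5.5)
Your argument is correct and is essentially the standard one. The paper itself only cites Bonahon--Otal and Lecuire here, but it runs the same mechanism in the proof of Proposition~\ref{prop:continuous_convergence_drilling}: choose plaques $P_n$ of $\widetilde f_n$ through a fixed small region of a plaque $P$ of the limit, so that $H^s_{P_n}\to H^s_P$, and then pass $\widetilde f_n(x)\in H^s_{P_n}$ to the limit. The only point to state explicitly is why $\widetilde L_\infty$ is nowhere dense: it is $G$-invariant (from $G_n\to G$ geometrically), so it lifts a lamination on the finite-area surface $\HH^2/G$; without invariance, a closed union of disjoint geodesics in $\HH^2$ (for instance, all geodesics orthogonal to a fixed one) can have interior. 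Also, the containment $\widetilde L\subseteq\widetilde L_\infty$ from Step~1 is never actually used.
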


\subsection{Bending lamination}\label{sec:bending_lamination}
For a non-Fuchsian structure $\sigma$, the convex-core boundary is a convex pleated surface and carries a bending lamination, which can be seen as living on $\partial_{\infty}\sigma$. This defines the bending map
\begin{equation*}
    \bb\colon\cQI_0(\sigma)\to\cML(\partial_{\infty}\sigma).
\end{equation*}
For geometrically finite $\sigma$, this map is continuous
and tangentiable~\cite{MR1678469}.

For geometrically finite structures on $M$, assigning weight $\pi$ to each component of the parabolic locus gives a measured lamination on $\partial M$. We thus obtain
\begin{equation*}
    \bb\colon\cGF^{\nf}(M)\to\cML(\partial M),
\end{equation*}
whose image is characterized as follows.

\begin{theorem}[Realizability criterion, Bonahon--Otal~\cite{MR2144972}, Lecuire~\cite{MR2207784}]\label{thm:realizability_criterion}
	Let $M$ be a hyperbolizable 3-manifold. A measured lamination $\lambda\in\cML(\partial M)$ is the bending lamination of a non-Fuchsian geometrically finite structure on $M$ if and only if it satisfies the following conditions~:
	\begin{enumerate}
		\item $\lambda$ has no closed leaf of weight strictly greater than $\pi$~;
		\item \emph{Annulus condition:} there exists $\eta>0$ such that, for any essential annulus $A$ in $M$, we have $i(\partial A,\lambda)\geq\eta$~;
		\item \emph{Disk condition:} for any essential disk $D$ in $M$, we have $i(\partial D,\lambda)>2\pi$.
	\end{enumerate}
\end{theorem}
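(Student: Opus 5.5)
The proof splits into necessity of the three conditions and sufficiency. For \textbf{necessity}, take $\sigma\in\cGF^{\nf}(M)$ with $\lambda=\bb(\sigma)$. Condition (1) follows from convexity of $\partial C(\sigma)$: a closed bending line of weight $\theta$ is a dihedral edge of exterior angle $\theta$ of a convex set, so $\theta\le\pi$. Weight exactly $\pi$ occurs only for the two sides of a flattened annulus, which is a rank-one cusp, and is allowed by convention. For (3), let $D$ be an essential disk. We isotope $\partial D$ into $\partial C(\sigma)$ and lift it to a closed curve in $\HH^3$ lying on the boundary of the convex set $\CH(\Lambda(\sigma))$. This curve is geodesic in the intrinsic hyperbolic metric away from bending lines, and it turns by the bending amount when it crosses them. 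It bounds a disk in the hyperbolic space $\HH^3$, so a Fenchel/Gauss--Bonnet type inequality for closed curves on convex surfaces forces its total turning to exceed $2\pi$. Strict inequality comes from the negative curvature contribution of the disk, giving $i(\partial D,\lambda)>2\pi$. For (2), we argue by contradiction. Take essential annuli $A_n$ with $i(\partial A_n,\lambda)\to0$ and realize $\partial A_n$ as nearly geodesic curves on $\partial C(\sigma)$ that are almost unbent. Homotopic curves in a geometrically finite manifold which are almost geodesic must have comparable lengths. Their lengths are then bounded below away from cusps, contradicting that they cobound an essential annulus not homotopic into the parabolic locus. I would use geometric finiteness (compactness of $C(\sigma)^{\ge\epsilon}$) together with a Gauss--Bonnet estimate on a pleated annulus spanning $\partial A_n$.

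For \textbf{sufficiency in the rational case}, following Bonahon--Otal, let $\lambda=\sum\theta_i c_i$ with $\theta_i\in(0,\pi]$. Consider the family $t\lambda$, $t\in(0,1]$; for small $t$ it is realized near a Fuchsian structure (when one exists) or, more generally, near a suitable base structure. The aim is to show that the set of realizable $t$ is open and closed. Openness uses local rigidity of hyperbolic cone manifolds (Hodgson--Kerckhoff~\cite{MR1622600}). Doubling the convex core along its boundary turns bending angles into cone angles $2\pi-2\theta_i$ along the $c_i$, which can be deformed freely; weight-$\pi$ leaves are treated as cusps. Closedness is compactness: a sequence of structures with bending $t_n\lambda$ must subconverge. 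This is where the annulus and disk conditions enter. The disk condition prevents compression disks from becoming short, and the annulus condition prevents essential annuli from degenerating. Together they give uniform lower bounds on the lengths of the relevant curves and, via the bounded bending, upper bounds on the length of a filling set of curves on $\partial C$. By a Thurston-type compactness argument the representations then stay in a compact subset of $\cAH$.

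For the \textbf{general case}, approximate $\lambda$ by rational $\lambda_n\to\lambda$ that still satisfy (1)--(3) with uniform constants; this is possible because (2) is open, and (3) is open on the compact set of relevant disk slopes. Realize each $\lambda_n$ by some $\sigma_n$, and extract a limit using the same compactness estimates. Then pass the convex pleated boundary surfaces to the limit via Lemma~\ref{lem:convex_or_even_pleated_surfaces_closed}, and conclude $\bb(\sigma)=\lambda$ by Proposition~\ref{prop:Lecuire_continuity_bending_lamination_if_pleated_surfaces_converge}. One must also check that the limit is geometrically finite, non-Fuchsian, and has parabolic locus exactly the weight-$\pi$ leaves.

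The \textbf{main obstacle} is this compactness, both in the closedness step and in the final limit, especially for compressible boundary. There, algebraic limits may fail to be strong and bent annuli or tori can escape to infinity. I would first try to bound the lengths of $\partial D$ and $\partial A$ on $\partial C(\sigma_n)$ using the strict margins $i(\partial D,\lambda)-2\pi>0$ and $\eta>0$, combining these with Gauss--Bonnet on pleated disks and annuli.
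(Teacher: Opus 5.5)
The paper does not prove this theorem; it quotes it from Bonahon--Otal \cite{MR2144972} and Lecuire \cite{MR2207784}. Your architecture is the right one: necessity via convexity and a Fenchel/Gauss--Bonnet estimate, the rational case by a continuity method driven by cone-manifold rigidity \cite{MR1622600} of the doubled convex core, and the general case by rational approximation, compactness and continuity of bending. But there are genuine gaps.

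\textbf{The continuity method is anchored at the wrong end.} Along the ray $t\lambda$ the disk condition reads $t\,i(\partial D,\lambda)>2\pi$, so your small-$t$ base point fails in three ways.
\begin{enumerate}
\item If $\partial M$ is compressible, the admissible $t$ form an interval bounded away from $0$, so there is nothing to start from at small $t$.
\item If $M$ contains an essential annulus, structures realizing $t\lambda$ cannot converge in $\cGF^{\nf}(M)$ as $t\to0$: by continuity of bending the limit would have bending lamination $0$, which violates (2).
\item At a Fuchsian structure the convex core is a surface. Its double is not a $3$-dimensional cone manifold, so Hodgson--Kerckhoff says nothing there.
\end{enumerate}
Your anchor exists only for acylindrical $M$ with incompressible boundary.

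What is needed is to deform in the space $\Theta\subset(0,\pi]^k$ of angle vectors of $\lambda=\sum_i\theta_ic_i$ with fixed support. The annulus condition depends only on the support. Each disk condition $\sum_i\theta_i\,i(\partial D,c_i)>2\pi$ is a half-space with nonnegative integer coefficients, and only the finitely many minimal coefficient vectors matter (Dickson's lemma). Hence $\Theta$ is open, convex and upward closed, so it contains $(\pi,\dots,\pi)$ and the segment from there to $\theta$.

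At $(\pi,\dots,\pi)$, $\pi\abs{\lambda}$ is realized by the structure with parabolic locus $\abs{\lambda}$ and totally geodesic convex-core boundary. This comes from Thurston's hyperbolization of the pared manifold $(\overline{M},N(\abs{\lambda})\cup T)$ \cite{MR648524}; I expect the annulus and disk conditions at weight $\pi$ to supply its hypotheses, though this needs checking. Cone angle $0$ is a cusp. Local parameterization by cone angles plus properness then makes the map from structures bent exactly along $\abs{\lambda}$ to $\Theta$ a covering of a convex set, hence onto.

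\textbf{Openness of (2) and (3) is not justified.} In the general case you say rational approximants satisfy (2)--(3) because (3) is ``open on the compact set of relevant disk slopes''. This is not a valid reason: meridians form an infinite family accumulating on non-meridian laminations, and strict inequality for each disk gives no uniform margin.

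For instance, take $F\times[0,1]$ with $F$ a one-holed torus, $P=\partial F\times\{1/2\}$, and $\lambda=\pi P+\mu$ with $\mu$ irrational on $F\times\{1\}$. This $\lambda$ meets every meridian with mass $>2\pi$. Yet the infimum is $2\pi$ by Dirichlet approximation, so each $(1-1/n)\lambda$ violates (3). This $\lambda$ is kept out of $\cP(M)$ only by the uniform constant $\eta$ in (2).

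Openness of (2) and (3) at points of $\cP(M)$ is a genuine theorem of Lecuire. It is proved by taking Hausdorff limits of $\partial A_n$ and $\partial D_n$, using Casson's criterion to produce a homoclinic leaf $\ell$, and proving $i(\ell,\lambda)>2\pi$; see the proof of Lemma~\ref{lem:openness_of_cP}.

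\textbf{Remaining gaps.} Your necessity argument for the uniform annulus condition has the same defect: a lower bound on lengths contradicts nothing, and the uniform $\eta$ needs such a limit analysis. Finally, the compactness you defer as the main obstacle is the heart of both cited works. In the compressible case it is essentially Lecuire's properness of the bending map. So even after these repairs, the sufficiency direction remains an outline.
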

Moreover, when $\lambda$ is rational, Bonahon and Otal also showed that the geometrically finite structure realizing $\lambda$ is unique~\cite{MR2144972}.

\begin{remark}\label{rem:bending_fuchsian}
	If $M$ is homeomorphic to $S\times\R$ for a complete hyperbolizable surface $S$, then it admits Fuchsian structures. If $S$ is closed, then the convex core of a Fuchsian structure is a totally geodesic copy of $S$ inside $M$. In that case, its bending lamination is the measured lamination zero. If $S$ is not closed, then a Fuchsian structure on $M$ is either:
	\begin{itemize}
		\item convex co-compact, in which case its convex core is a totally geodesic surface with boundary $\overline{F}$ and its ideal boundary is homeomorphic to the double $D\overline{F}\coloneqq \overline{F}\cup\overline{F}^{-}/\partial\overline{F}\sim\partial\overline{F}^{-}$ of $\overline{F}$, where $\overline{F}^{-}$ denotes $\overline{F}$ with the opposite orientation. In that case, one can consider its bending lamination as the element of $\cML(D\overline{F})$ supported on $\partial\overline{F}$ with weight $\pi$ on each component~;
		\item geometrically finite with rank-one cusps, when certain (or all) boundary components of $\overline{F}$ get pinched until they become cusps.
	\end{itemize}
	In particular, Fuchsian structures cannot be distinguished by their bending lamination. See Section~\ref{sec:punctured_surface_groups} for a discussion of quasi-Fuchsian structures over \emph{punctured surfaces}.
\end{remark}

\begin{definition}[Realizable laminations]
	Let $M$ be a hyperbolizable 3-manifold. A measured lamination $\lambda\in\cML(\partial M)$ is said to be \emph{realizable} if it is the bending lamination of a non-Fuchsian geometrically finite structure on $M$, or equivalently if it satisfies the three conditions of the previous theorem.
	
	Let $\cP(M)$ be the subset of $\cML(\partial M)$ consisting of realizable laminations, and $\cP_{\nc}(M)\subset\cP(M)$ the subset of realizable laminations without closed leaf of weight $\geq\pi$.
\end{definition}

As explained by Bonahon--Otal and Lecuire, $\bb\colon\cGF(M)\to\cML(\partial M)$ is not continuous. Lecuire shows in \cite{MR2464096,lecuire2025propernessbendingmap} that $\cP(M)$ should be endowed with the \emph{tubular topology} to make $\bb$ continuous, see Definition~\ref{def:tubular_topology}. Let $\cBL(M)$ denote the set $\cP(M)$ endowed with the tubular topology.

\begin{theorem}[Lecuire~\cite{MR2464096,lecuire2025propernessbendingmap}]\label{thm:Lecuire_continuous_proper_on_non_Fuchsian_locus}
	The bending map $\bb\colon\cGF^{\nf}(M)\to\cBL(M)$ is continuous and proper.
\end{theorem}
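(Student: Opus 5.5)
The plan is to prove the two properties separately, working throughout with the convex core boundaries $\partial C(\sigma)$, seen as convex pleated surfaces in the sense of \S\ref{sec:pleated_surfaces}. The tubular topology is tailored exactly to the failure of weak-$\ast$ continuity at rank-one cusps, so the proof must control two regions separately. Away from the parabolic locus we use Proposition~\ref{prop:Lecuire_continuity_bending_lamination_if_pleated_surfaces_converge}. In the thin annuli near the parabolic locus we need a separate bending estimate.

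\emph{Continuity.} Let $\sigma_n\to\sigma$ in $\cGF^{\nf}(M)$, and fix a component $S$ of $\partial M$. By strong convergence, the limit sets $\Lambda(\sigma_n)$ converge to $\Lambda(\sigma)$ in the Hausdorff sense, and the convex hulls converge on compact sets. Hence the convex core boundaries converge compactly, and the approximate isometries can be chosen isotopic to the identity. We then lift the boundary component facing $S$ to abstract pleated surfaces $(\widetilde f_n,G_n,\rho_n)$. The induced hyperbolic metrics on $\partial C(\sigma_n)$ are $1$-Lipschitz images with bounded area, so they subconverge geometrically away from the curves that become cusps.
\begin{itemize}
	\item \emph{Identifying the limit.} By Lemma~\ref{lem:convex_or_even_pleated_surfaces_closed}, any limit pleated surface is convex or even. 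Since it sits in $\partial\CH(\Lambda(\sigma))$ and $\sigma$ is non-Fuchsian, it is the convex core boundary of $\sigma$, restricted to $\partial M-P_\sigma$.
	\item \emph{Away from $P_\sigma$.} Proposition~\ref{prop:Lecuire_continuity_bending_lamination_if_pleated_surfaces_converge} gives $\bb(\sigma_n)\to\bb(\sigma)$ on compact subsets of $\partial M-P_\sigma$. Since every subsequence has such a sub-subsequence, we get convergence there.
	\item \emph{Near a component $c$ of $P_\sigma$.} The length of $c$ in $\sigma_n$ tends to $0$. The convex core boundary crosses the Margulis tube of $c$ along a thin annulus $A_n$ whose core is homotopic to $c$. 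We will show that every leaf of $\bb(\sigma_n)$ entering $A_n$ is nearly parallel to $c$; otherwise it would have to cross a long thin region, contradicting the geodesic leaves having bounded geometry.
	\item \emph{Bending across $c$.} We will then show that the total bending across $A_n$ is at least $\pi-\epsilon$. The two sides of the pleated annulus enter the cusp region asymptotically parallel. A Gauss--Bonnet argument applied to a transverse arc projected to the horospherical cross-section shows that the turning is close to $\pi$. The turning may be arbitrarily larger, which is why the tubular topology imposes no upper bound.
\end{itemize}
Together, these three items give tubular convergence $\bb(\sigma_n)\to\bb(\sigma)$.

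\emph{Properness.} Let $\bb(\sigma_n)$ stay in a compact set $K\subset\cBL(M)$; we must show that $\sigma_n$ subconverges in $\cGF^{\nf}(M)$. Since the annulus and disk conditions are open conditions, compactness of $K$ gives a uniform $\eta>0$ and a uniform margin $\delta>0$ such that $i(\partial A,\lambda)\geq\eta$ for all essential annuli $A$ and $i(\partial D,\lambda)\geq2\pi+\delta$ for all essential disks $D$.
\begin{enumerate}
	\item \emph{Bounded length of bending.} Bounded bending measure and a Gauss--Bonnet area bound give upper bounds on the length of $\bb(\sigma_n)$ on $\partial C(\sigma_n)$.
	\item \emph{No unexpected pinching.} We argue by contradiction. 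Suppose a curve on $\partial C(\sigma_n)$ becomes short without being a weight-$\approx\pi$ leaf of the limit lamination. Then either it bounds a compressing disk whose boundary has bending near $2\pi$, contradicting the disk condition with margin $\delta$, or it produces an essential annulus with bending tending to $0$, contradicting $\eta$. Both arguments use a short-curve and bending estimate in the spirit of Bonahon--Otal.
	\item \emph{No wild twisting.} Conversely, curves cannot become long except by twisting around short curves in $P$, which the tubular topology allows. Hence the structures $\partial_\infty\sigma_n$ stay in a compact set of the augmented Teichm\"uller strata indexed by pinchable multicurves.
	\item \emph{Extracting a limit.} By compactness results for pared manifolds (Thurston's compactness of $\cAH$ for boundary-incompressible $M$, and Kerckhoff--Thurston-type arguments using the disk condition in the compressible case), the $\sigma_n$ subconverge algebraically. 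The pleated surface control then upgrades this to strong convergence to a geometrically finite limit. Continuity identifies the limit as a structure with bending lamination in $K$. This limit is non-Fuchsian: a Fuchsian limit would force bending close to $0$ (closed case) or disk-like degenerations, which are excluded by $K$.
\end{enumerate}

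I expect properness to be the main obstacle, specifically step 2 in the compressible-boundary case. Ruling out degeneration, where a sequence could diverge along Dehn twists about compression disks while the bending stays bounded, requires the quantitative disk condition combined with a uniform estimate on the bending of pleated disks. There I would first try to adapt Lecuire's argument from the realizability criterion, Theorem~\ref{thm:realizability_criterion}.
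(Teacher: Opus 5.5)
The paper does not reprove this statement; it quotes Lecuire. The shape of his arguments is reproduced in Proposition~\ref{prop:continuous_convergence_drilling} and in the proof of Theorem~\ref{thm:conformal_boundary_map_homeo}. Your continuity half follows the same route as the former: subconvergence of the convex core boundaries as pleated surfaces, identification of the limit, then Proposition~\ref{prop:Lecuire_continuity_bending_lamination_if_pleated_surfaces_converge}. Your estimates near $P_\sigma$ are only heuristic, since the tubular condition concerns measures, not individual leaves. They are best done in the limit, where there is an actual cusp:
\begin{itemize}
\item $i(\bb(\sigma_n),c)\to 0$ because $c$ is homotopic to a peripheral curve lying in a fixed compact part of $\partial M-P_\sigma$ and disjoint from the support of $\bb(\sigma)$;
\item the lower bound $\pi-\epsilon$ should come from support planes on the two sides of the collar converging to the two planes bounding the convex hull at the parabolic fixed point, which are tangent at infinity.
\end{itemize}
The genuine gap is in properness. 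Step~1 is already false as stated: a compact subset of $\cBL(M)$ need not have bounded bending measure. Tubular neighbourhoods (Definition~\ref{def:tubular_topology}) only impose $i(\mu,k_i)>\pi-\epsilon$ across the components of $P$, with no upper bound, and the spiralling sequences described in the introduction do converge.

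More seriously, steps~2 and~3 contain no mechanism preventing $\partial_\infty\sigma_n$ from leaving every compact subset of $\augmTeichPinch(\partial M)$ without any curve becoming short. Take $M=S\times\R$ with $S$ closed, fix one conformal end, and push the other by powers of a pseudo-Anosov map. Nothing pinches, on the conformal boundary or on the convex core boundary; curves simply become long in the fixed marking. Your claim in step~3 that this cannot happen is exactly the content of properness.

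The missing idea is that the annulus and disk conditions must be used globally, through a compactness theorem of double limit type. Length bounds for the bending laminations (suitably modified near $P$), together with the annulus and disk conditions satisfied by their limit, force the holonomies to subconverge algebraically up to conjugacy. In the incompressible case this is supplied by Thurston's double limit and relative boundedness theorems (see~\cite{MR4264581,MR4556468}); in the compressible case by Lecuire's extension of the Masur domain~\cite{MR2258744}. Compare the use of relative boundedness in the proof of Theorem~\ref{thm:conformal_boundary_map_homeo}. The compactness of $\cAH(M)$ you invoke in step~4 holds only for acylindrical $M$ and cannot play this role.

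After that you still need three steps:
\begin{itemize}
\item show the algebraic limit is geometrically finite, via convergence of the convex core boundaries as pleated surfaces;
\item upgrade to strong convergence by controlling accidental parabolics (Kleineidam~\cite{MR2153905});
\item pass from homotopy classes to isotopy classes.
\end{itemize}
For the last point, the twisting along compression disks that you flag as the main obstacle is not excluded by a quantitative estimate on pleated disks. It is handled by the $\Mod(M)$-equivariance of $\bb$ together with proper discontinuity of the $\Mod(M)$-action, which comes from Johannson's finiteness theorem~\cite{MR551744}; compare Lemma~\ref{lem:prop_discontinuous}.
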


If we restrict to convex co-compact structures, the bending map is known to be injective~\cite{dularschlenker2024pleating}. Combined with continuity and properness, we obtain:
\begin{theorem}[Bonahon--Otal, Lecuire, Dular--Schlenker]\label{thm:injectivity_convex_cocompact}
	Let $M$ be a hyperbolizable 3-manifold such that $\pi_1(M)$ has no abelian subgroup of rank-two. Then the bending map $\bb\colon\cCC^{\nf}(M)\to\cP_{\nc}(M)$ is a homeomorphism.
\end{theorem}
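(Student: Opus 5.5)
The claim is that $\bb\colon\cCC^{\nf}(M)\to\cP_{\nc}(M)$ is a homeomorphism. I will obtain this by assembling four properties: injectivity, continuity, properness and surjectivity. Then I will invoke the standard fact that a continuous, proper bijection between locally compact Hausdorff (indeed first-countable) spaces is a homeomorphism.

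\textbf{Reductions.} Since $\pi_1(M)$ contains no rank-two abelian subgroup, minimally parabolic structures coincide with convex co-compact ones, so $\cCC(M)=\cGF(M;\emptyset)$. The first task is to identify the image. For $\sigma\in\cCC^{\nf}(M)$ there is no rank-one cusp. Moreover, $\partial C(\sigma)$ is then a closed convex pleated surface with nonempty interior on the convex side, so no closed leaf has weight $\pi$, since that would fold two plaques together. Hence $\bb(\sigma)\in\cP_{\nc}(M)$. Conversely, take $\lambda\in\cP_{\nc}(M)$. By Theorem~\ref{thm:realizability_criterion} it is realized by some non-Fuchsian geometrically finite $\sigma$. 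The parabolic locus of $\sigma$ appears in $\bb(\sigma)$ as closed leaves of weight exactly $\pi$, so it must be empty and $\sigma$ is convex co-compact. This gives surjectivity onto $\cP_{\nc}(M)$.

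\textbf{Topologies.} On $\cP_{\nc}(M)$ there are no closed leaves of weight $\pi$, so the tubular topology of Definition~\ref{def:tubular_topology} reduces to the usual topology of $\cML(\partial M)$. Thus $\cP_{\nc}(M)$ is an open subset of $\cBL(M)$ carrying the subspace topology. By Theorem~\ref{thm:Lecuire_continuous_proper_on_non_Fuchsian_locus}, $\bb$ is continuous and proper on $\cGF^{\nf}(M)$. Its restriction to $\cCC^{\nf}(M)=\bb^{-1}(\cP_{\nc}(M))$ is therefore continuous. It is also proper onto $\cP_{\nc}(M)$, because it is the restriction of a proper map to the full preimage of an open set. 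Injectivity on $\cCC^{\nf}(M)$ is exactly the theorem of~\cite{dularschlenker2024pleating}.

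\textbf{Conclusion.} The map is now a continuous proper bijection onto $\cP_{\nc}(M)$, which is an open subset of $\cML(\partial M)\cong\R^N$ and hence locally compact Hausdorff. A proper continuous map into a locally compact Hausdorff space is closed. A closed continuous bijection is a homeomorphism.

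The only delicate point is the compatibility of the topologies: that the tubular topology restricted to $\cP_{\nc}(M)$ is the weak-$\ast$ topology, and that $\cP_{\nc}(M)$ is open in $\cBL(M)$. Openness should follow because having a closed leaf of weight $\geq\pi$ is a closed condition near laminations without such leaves, and the annulus and disk conditions are open. I expect this to be a routine check against Definition~\ref{def:tubular_topology}.
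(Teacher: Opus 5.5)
Your proposal is correct and follows the paper's own justification: injectivity is the theorem of \cite{dularschlenker2024pleating}, continuity and properness come from Lecuire (Theorem~\ref{thm:Lecuire_continuous_proper_on_non_Fuchsian_locus}), surjectivity onto $\cP_{\nc}(M)$ comes from the Bonahon--Otal--Lecuire realizability criterion, and a continuous proper bijection is then a homeomorphism. The point you flag as delicate is harmless: by definition the tubular topology restricts to the usual topology on $\cML_{<\pi}(\partial M)$, openness follows from Lemma~\ref{lem:openness_of_cP}, and in any case a proper continuous map into a metrizable space is already closed, so you do not need local compactness.
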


\subsection{Real-analyticity}\label{sec:real_analyticity}
Using Bonahon's holomorphic shear-bend coordinates~\cite{MR1413855} (with cusped surfaces treated in \cite[\S 12.3]{MR1413855}) and the fact that quasi-conformal deformation spaces are locally biholomorphic to complex submanifolds of the character variety, see~\cite[Theorems 7.53-55]{MR1638795},\cite{MR1208313}, we have the following result~\cite{dularschlenker2024pleating}.
\begin{proposition}\label{prop:real_analycity}
	Let $M$ be a hyperbolizable 3-manifold and $\sigma\in\cI(M)$ a hyperbolic structure on $M$. Let $S$ be a connected component of $\partial_{\infty}\sigma$ and $\lambda\in\cML(S)$ a measured lamination. Then the subspace of $\cQI_0(\sigma)$ consisting of those structures for which $\lambda$ is the bending lamination of the boundary component of the convex core facing $S$ is a real-analytic subspace. In particular, fibres of the bending map $\bb\colon\cQI_0(\sigma)\to\cML(\partial_{\infty}\sigma)$ are real-analytic spaces.
\end{proposition}

This proposition will be essential for our purpose via the following results. Recall that a metrizable space $X$ is an \emph{absolute neighborhood retract} (ANR) if, whenever $X$ is embedded as a closed subset of a metrizable space $Y$, it is a retract of some neighborhood of itself in $Y$~\cite{MR0872468}.

\begin{theorem}[Borel--Haefliger~\cite{MR149503}, Sullivan~\cite{MR278333}]\label{thm:real_analytic_spaces_have_fundamental_class}
	A compact real-analytic space $X$ has a fundamental class in homology with $\Z/2\Z$ coefficients. In particular, $H_{d}(X;\Z/2\Z)\neq 0$ if $X$ has dimension $d$.
\end{theorem}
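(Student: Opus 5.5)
This is the Borel--Haefliger / Sullivan fundamental class theorem, which the paper cites rather than proves; I will sketch how one would establish it. The plan is to triangulate $X$ and show that the $d$-simplices sum to a $\Z/2\Z$-cycle that is not a boundary.

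\textbf{Step 1 (triangulation).} By Łojasiewicz's theorem (or Hironaka's), a compact real-analytic space $X$ admits a finite triangulation compatible with its analytic stratification. Under this triangulation:
\begin{itemize}
	\item the regular locus $X_{\mathrm{reg}}$ of top dimension $d$ is a union of open $d$-simplices and open faces;
	\item the singular locus, together with the lower-dimensional parts, lies in a subcomplex of dimension $\leq d-1$.
\end{itemize}
We then take the chain $c=\sum\Delta$ over all $d$-simplices $\Delta$. Since $X$ has dimension $d$, $c\neq 0$.

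\textbf{Step 2 (cycle condition).} We must show that $\partial c=0$ mod $2$. This means every $(d-1)$-simplex $\tau$ is a face of an even number of $d$-simplices.
\begin{itemize}
	\item If the interior of $\tau$ meets the $d$-dimensional regular part, then $X$ is locally a manifold there. So exactly two $d$-simplices meet along $\tau$.
	\item If $\tau$ lies in the singular locus $\Sigma$, with $\dim\Sigma=d-1$ at $\tau$, we use Sullivan's local Euler characteristic theorem. For a real-analytic space, the link $L_x$ of every point $x$ has even Euler characteristic. The number of $d$-simplices containing $\tau$ equals $\chi$ of the link of an interior point of $\tau$ computed transversally: this link is a finite set of points whose cardinality is $\chi(L_x)$ up to the suspension shift, so it is even. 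Points where $X$ is locally lower-dimensional contribute no $d$-simplices.
\end{itemize}
Hence $c$ is a cycle.

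\textbf{Step 3 (nonvanishing).} There are no $(d+1)$-simplices, so $c$ cannot be a boundary. Since $c\neq0$, we get $[c]\neq0$ in $H_d(X;\Z/2\Z)$. Simplicial homology agrees with singular homology, which gives the statement. Independence from the choice of triangulation is not needed for nonvanishing.

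\textbf{Main obstacle.} The hard step is Sullivan's parity theorem, $\chi(L_x)\equiv0 \bmod 2$. I would prove it by complexifying near $x$. Take a local embedding $X\subset\R^N$ with $x=0$ and consider the complexification $X_\C$. The link $L_x$ is the fixed set of complex conjugation acting on the complex link. Because conjugation is an involution whose fixed set has even codimension structure, one applies the Smith/Lefschetz parity $\chi(L_x)\equiv\chi(L_x^\C)\bmod2$. Alternatively, one can use a Milnor fibration argument: write $L_x$ as the boundary of the compact set $\{f\leq\varepsilon\}\cap X$ with $f=|y|^2$, so that $\chi(L_x)=2\chi(\text{Milnor fibre})-\chi(\partial)$ arguments via doubling. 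I would try the doubling argument first, namely that $L_x$ is the boundary of a compact stratified space, and boundaries of such spaces with local $\Z/2$ cycles have even Euler characteristic, proceeding by induction on $d$.
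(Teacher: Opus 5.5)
\textbf{Verdict.} The reduction is correct, but your self-contained proof of Sullivan's parity theorem has a genuine gap.

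\textbf{The reduction (Steps 1--3).} This is Sullivan's route to the statement; the paper gives no proof and only cites Borel--Haefliger and Sullivan. For $x$ interior to a $(d-1)$-simplex $\tau$, the link of $x$ is the $(d-1)$-fold suspension of the finite set $\mathrm{Lk}(\tau)$. Suspension preserves the parity of $\chi$, so the parity theorem makes $\sum\Delta$ a mod $2$ cycle. It cannot bound because there are no $(d+1)$-simplices. If you quote Sullivan's parity theorem, as Step 2 does, nothing is missing. The problem is the ``main obstacle'' paragraph, where you try to prove that theorem yourself.

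\textbf{The complexification sketch.} The congruence $\chi(L_x)\equiv\chi(L_x^{\C})\pmod 2$ holds for any involution of a finite complex: after an equivariant triangulation, simplices off the fixed set are permuted freely in pairs. So no ``even codimension'' input is involved. But this only moves the problem: you still need $\chi(L_x^{\C})$ to be even. That is where the complex structure must be used, and your sketch never supplies it. In fact $\chi(L_x^{\C})=0$; this is Sullivan's complex-analytic result, provable e.g.\ via a Milnor-type fibration over the circle for a generic linear form and induction on dimension.

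\textbf{The doubling sketch.} This cannot work as stated. The cone on $\mathbb{RP}^2$ satisfies every hypothesis you list: it is a compact polyhedron, a manifold with boundary away from its apex, and it carries a relative mod $2$ fundamental cycle (every interior $2$-simplex lies on exactly two $3$-simplices). Yet its boundary has $\chi(\mathbb{RP}^2)=1$. Any correct argument must use analyticity at the point $x$ itself, and this one does not.

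\textbf{A cheap repair.} Step 2 only needs parity at points interior to $(d-1)$-simplices. Take the triangulation compatible with a Whitney stratification. At such points $X$ is locally $\R^{d-1}$ times a transverse slice, and the slice is a real-analytic curve germ. A real-analytic curve germ has an even number of half-branches. To see this, complexify: a branch that is not conjugation-invariant meets $\R^N$ only at the origin, since it meets its conjugate only there. A conjugation-invariant branch meets $\R^N$ in the image of a real interval under its normalization, giving two half-branches. In this codimension-one situation the complexification idea closes immediately.
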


\begin{lemma}\label{lem:real_analytic_spaces_are_ANRs}
	Compact real-analytic spaces are absolute neighborhood retracts.
\end{lemma}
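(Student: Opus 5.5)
The plan is to use the local structure theory of real-analytic sets: they can be triangulated, and finite simplicial complexes are ANRs.

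First I would reduce to the semi-analytic setting. By definition, a real-analytic space $X$ is locally modelled on the zero set $Z(f_1,\dots,f_k)$ of finitely many real-analytic functions on an open set $U\subset\mathbb{R}^N$. Each such local model is a closed analytic subset of $U$.

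By Łojasiewicz's triangulation theorem (in the refined form of Hironaka), every locally finite family of semi-analytic subsets of $\mathbb{R}^N$ can be simultaneously triangulated. Hence every point of $X$ has a closed neighbourhood homeomorphic to a finite polyhedron. Finite polyhedra are ANRs, since finite simplicial complexes are compact metrizable spaces that are locally contractible and finite-dimensional. It follows that $X$ is locally an ANR.

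To pass from local to global, I would invoke Hanner's theorem: a metrizable space that is locally an ANR is an ANR. This needs $X$ to be metrizable. A compact real-analytic space is Hausdorff by convention, second countable (being covered by finitely many local models in $\mathbb{R}^N$), and compact, so Urysohn's metrization theorem applies. Compactness also guarantees that only finitely many charts are needed.

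Alternatively, one can avoid Hanner's theorem by using a global triangulation of the whole compact analytic space (Łojasiewicz and Giesecke treat this case directly). This shows at once that $X$ is homeomorphic to a finite simplicial complex and hence is an ANR.

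In the cases relevant to this paper, I would also record why the fibres are genuinely of this type. By Proposition~\ref{prop:real_analycity}, a fibre is a real-analytic subspace of $\cQI_0(\sigma)$, and $\cQI_0(\sigma)$ is a complex manifold, biholomorphic to a Teichmüller space by Theorem~\ref{thm:simultaneous_uniformization}. So the fibre is locally an analytic subset of $\mathbb{C}^d\cong\mathbb{R}^{2d}$, and the triangulation theorem applies directly. Its compactness comes from Lecuire's properness.

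The only step with real content is the triangulation theorem. Everything else is point-set topology, so I would cite that theorem rather than reprove it. The one point needing care is checking that "real-analytic space" is meant in the sense of locally closed analytic subsets of Euclidean space, possibly with nonreduced structure. The nonreduced structure is irrelevant, since only the underlying topological space matters for the ANR property.
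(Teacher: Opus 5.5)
Your proof is correct.

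Your fallback route is exactly the paper's proof. The paper regards the compact real-analytic space as a compact semi-analytic subset of a real-analytic manifold, triangulates it globally by \L{}ojasiewicz's theorem, and then cites that compact polyhedra are ANRs.

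Your primary route differs in the local-to-global step. You triangulate only small compact pieces $Z\cap\overline{B}$ of local models, deduce that $X$ is locally an ANR, and globalize with Hanner's theorem after checking metrizability via Urysohn.

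This buys some generality: it applies to abstract real-analytic spaces that are not presented as subsets of a manifold. The cost is invoking Hanner's theorem and a metrization argument. The paper's route is shorter because, in its applications, the fibres already sit inside $\cQI_0(\sigma)\cong\cT(\partial_\infty\sigma)$, a real-analytic manifold biholomorphic to a domain in $\mathbb{C}^d$. So a global embedding, and hence a finite global triangulation, is available for free, as you yourself note.

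One small point to tidy in your version: Hanner's theorem is usually stated with \emph{open} ANR neighbourhoods. You should say that the interior of your polyhedral closed neighbourhood is an open subset of an ANR, and hence itself an ANR.
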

\begin{proof}
	A compact real-analytic space is a compact semi-analytic subset of a real-analytic manifold, hence it admits a (finite) triangulation by a theorem of \L{}ojasiewicz~\cite{MR173265}. A compact polyhedron is an ANR, see e.g.~\cite[Corollary 8A]{MR0872468}.
\end{proof}

\begin{corollary}\label{cor:contractible_compact_real_analytic_spaces_are_singletons}
	Contractible compact real-analytic spaces are singletons.
\end{corollary}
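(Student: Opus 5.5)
Let $X$ be a contractible compact real-analytic space, and suppose $X$ has dimension $d\geq 0$. I will show $d=0$ and then that $X$ is a single point. The argument rests on Theorem~\ref{thm:real_analytic_spaces_have_fundamental_class}: the fundamental class gives $H_d(X;\Z/2\Z)\neq 0$. Contractibility gives $\widetilde{H}_\ast(X;\Z/2\Z)=0$, so in particular $H_k(X;\Z/2\Z)=0$ for every $k\geq 1$.

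First, if $d\geq 1$, the nonzero group $H_d(X;\Z/2\Z)$ contradicts the vanishing of $H_d(X;\Z/2\Z)$, so $d=0$. Second, I must see that $X$ is nonempty. Contractible spaces are nonempty by convention, since the constant map needs a basepoint; I will note this explicitly.

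Third, a compact zero-dimensional real-analytic space is a finite set of points. It is a discrete analytic subset, and compactness makes it finite. Alternatively, Lemma~\ref{lem:real_analytic_spaces_are_ANRs} and the underlying \L{}ojasiewicz triangulation show $X$ is a finite $0$-dimensional polyhedron. A contractible space is path-connected, so this finite discrete set has exactly one point.

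The only delicate point is the meaning of ``dimension'' in Theorem~\ref{thm:real_analytic_spaces_have_fundamental_class} and the fact that it equals $0$ exactly for finite sets. I would use the triangulation, where dimension is the maximal simplex dimension. This makes the homological dimension statement and the finite-set statement transparent. One could also bypass the fundamental class: a positive-dimensional top simplex in a triangulation of $X$ gives a nonzero top mod-$2$ class only with the fundamental class theorem, so the cited theorem is genuinely used. The remaining steps are routine.
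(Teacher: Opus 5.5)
Your proof is correct and follows the paper's argument: the nonzero mod-$2$ fundamental class in degree $d$, together with the vanishing of reduced homology for a contractible space, forces $d=0$, so $X$ is a finite discrete set, and connectedness leaves a single point. Your closing aside about ``bypassing'' the fundamental class contradicts itself and should be dropped. The analytic input is genuinely needed, since a closed simplex is a contractible polyhedron of positive dimension.
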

\begin{proof}
	A compact real-analytic space $X$ has a fundamental class in homology with coefficients in $\Z/2\Z$ (by Theorem~\ref{thm:real_analytic_spaces_have_fundamental_class}). But contractibility forces this fundamental class to live in degree zero, hence $X$ is a discrete union of points. By contractibility again, $X$ must be a singleton.
\end{proof}

\section{Contractibility of boundary fibres}\label{sec:boundary_of_homeomorphisms}
To prove injectivity of the bending map in the convex co-compact case~\cite{dularschlenker2024pleating}, we used the following theorem of Finney~\cite{MR0224087}.
\begin{theorem}[Finney]\label{thm:finney}
	Let $M,M'$ be topological manifolds of the same dimension. Let $f_n\colon M\to M'$ be a sequence of continuous maps converging continuously (equivalently, compactly) to a continuous map $f_{\infty}\colon M\to M'$. Assume that each $f_n$ is injective. If a fibre of $f_{\infty}$ is compact and a retract of a relatively compact neighborhood, then it is contractible.
\end{theorem}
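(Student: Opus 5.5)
We may assume the fibre $K\coloneqq f_{\infty}^{-1}(y)$ is nonempty (otherwise there is nothing to prove). Fix a retraction $r\colon U\to K$ of a relatively compact open neighborhood $U$ of $K$. The idea is to use the injective maps $f_n$ to produce, for a single large $n$, a contractible open set $D_n\subset U$ containing $K$; composing a contraction of $D_n$ with $r$ then contracts $K$ inside itself. The key input is invariance of domain: since $M$ and $M'$ have the same dimension and $f_n$ is continuous and injective, each $f_n$ is an open embedding.

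\emph{Step 1 (choice of a neighborhood and a ball).} Choose an open neighborhood $W$ of $K$ with $\overline{W}\subset U$; then $\overline{W}$ is compact. The boundary $\partial W$ is compact and disjoint from $K$, so $f_{\infty}(\partial W)$ is a compact set not containing $y$. Choose a coordinate ball $B\ni y$ in $M'$ (so $B$ is contractible) whose closure is disjoint from $f_{\infty}(\partial W)$.

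\emph{Step 2 (uniform control for large $n$).} We claim that for $n$ large, $f_n(\partial W)\cap B=\emptyset$ and $f_n(K)\subset B$. Suppose the first claim fails along a subsequence. Then there are points $x_n\in\partial W$ with $f_n(x_n)\in B$. By compactness of $\partial W$ we may pass to a further subsequence with $x_n\to x\in\partial W$. Continuous convergence gives $f_n(x_n)\to f_{\infty}(x)$, which forces $f_{\infty}(x)\in\overline{B}$, a contradiction. The same argument applied to $K$, where $f_{\infty}\equiv y$ lies in the open set $B$, gives $f_n(K)\subset B$ for $n$ large.

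\emph{Step 3 (a contractible neighborhood of $K$).} Fix such an $n$ and set $D_n\coloneqq f_n^{-1}(B)\cap W$, an open set containing $K$.
\begin{itemize}
	\item The image $f_n(D_n)$ is open in $B$, because $f_n$ is an open map.
	\item The image is also closed in $B$. Let $f_n(x_k)\to z\in B$ with $x_k\in D_n$. By compactness of $\overline{W}$, pass to a subsequence $x_k\to x\in\overline{W}$; then $f_n(x)=z\in B$. Step~2 excludes $x\in\partial W$, so $x\in D_n$.
\end{itemize}
The image is nonempty and $B$ is connected, so $f_n\restr{D_n}\colon D_n\to B$ is a homeomorphism. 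Let $g\colon B\to D_n\subset U$ denote its inverse, and let $h_t$ be a contraction of $B$ to $y$, with $h_0=\mathrm{id}$.

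\emph{Step 4 (the contraction).} Define $H\colon K\times[0,1]\to K$ by $H_t(x)=r\bigl(g(h_t(f_n(x)))\bigr)$. This is well defined since $f_n(K)\subset B$. At $t=0$ we get $H_0(x)=r(g(f_n(x)))=r(x)=x$, because $x\in D_n$ and $r$ restricts to the identity on $K$. At $t=1$ the map $H_1$ is constant, equal to $r(g(y))$. Hence $K$ is contractible.

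The only delicate point is Step~2, namely passing from continuous convergence to the statement that $f_n(\partial W)$ eventually avoids a fixed ball. This is where compactness of $\overline{W}$, supplied by the relative compactness of the retracting neighborhood, is used. The same compactness is used again in Step~3 to show that $f_n(D_n)$ is closed in $B$.
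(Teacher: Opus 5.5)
Your proof is correct. The paper does not prove this statement itself: it is quoted from Finney~\cite{MR0224087} and used only through Corollary~\ref{cor:finney_variant}. So you are supplying a self-contained argument rather than reconstructing one.

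Your argument works directly with a single large $n$. Invariance of domain, together with the clopen argument of Step~3 (where compactness of $\overline{W}$ and $f_n(\partial W)\cap B=\emptyset$ enter), shows that $f_n$ maps an open set $D_n$ with $K\subset D_n\subset U$ homeomorphically onto a ball. Hence $K$ is a retract of a contractible space.

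The closest argument in the paper is the proof of its boundary analogue, Theorem~\ref{thm:fibres_contractible_boundary_of_homeo}. It has the same skeleton: a null-homotopy of a copy of the fibre inside a contractible open set, composed with the retraction $r$. But it runs by contradiction, and since there is no injective approximant and no invariance of domain, three things change:
\begin{itemize}
\item contractibility of $V\cap\Phi(A_{\circ})$ must be imposed as a hypothesis, then pulled back to $\Phi^{-1}(V)\cap A_{\circ}$ by the open embedding $\Phi\restr{A_{\circ}}$;
\item the local product structure is needed to place a copy $F\times\{\tau\}$ of the fibre in $A_{\circ}$;
\item your step $f_n(\partial W)\cap B=\emptyset$ is replaced by a connectedness argument forcing, for every $V$, a point of $\partial\Omega\times T$ to be mapped into $V$, which properness of $\Phi$ then rules out.
\end{itemize}

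Your direct version gives a little more. Shrinking $W$ and $B$ shows that $K$ has arbitrarily small open neighborhoods homeomorphic to a Euclidean ball, and the retraction is only used at the last step. Relative compactness of the retracting neighborhood is not really needed, since local compactness of $M$ already provides $W$ with $\overline{W}\subset U$ compact. Finally, injectivity of $f_n$ is only used on $W$, so your argument proves Corollary~\ref{cor:finney_variant} directly rather than by restriction.
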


For our purpose, we need the following two results: the first is a variant with slightly weakened conditions, while the second replaces sequences of functions with families of functions indexed by some space $T$ together with a point at infinity.

\begin{corollary}\label{cor:finney_variant}
	Let $A,B$ be topological manifolds of the same dimension. Let $f_n\colon A\to B$ be a sequence of continuous maps converging continuously (equivalently, compactly) to a continuous map $f_{\infty}\colon A\to B$. Let $b\in f_{\infty}(A)\subseteq B$ and let $F\coloneqq f_{\infty}^{-1}(b)$ be its fibre. Assume that the following conditions are satisfied:
	\begin{enumerate}
		\item There exists $n_0\in\N$ and a neighborhood $U$ of $F$ in $A$ such that the restrictions $f_n\restr{U}$ are injective for all $n\geq n_0$~;
		\item The fibre $F$ is compact and an absolute neighborhood retract.
	\end{enumerate}
	Then $F$ is contractible.
\end{corollary}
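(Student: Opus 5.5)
We deduce Corollary~\ref{cor:finney_variant} from Theorem~\ref{thm:finney} by localizing to a manifold neighborhood of $F$ on which the $f_n$ are injective. The idea is to replace $A$ by an open submanifold $W$ with $F\subset W\subset U$, and to apply Finney's theorem to the restrictions $f_n\restr{W}\colon W\to B$ for $n\geq n_0$. Since $W$ is open in $A$, it is a topological manifold of the same dimension as $B$. The restricted maps are injective by hypothesis (1), and they still converge continuously to $f_\infty\restr{W}$, because continuous convergence is a local property.

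\textbf{Fibre of the restriction.} The fibre of $f_\infty\restr{W}$ over $b$ is $F\cap W=F$, which is compact. It remains to check that $F$ is a retract of a relatively compact neighborhood inside $W$.

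\textbf{Retraction.} Since $A$ is a manifold, it is locally compact, Hausdorff and (componentwise) metrizable, and we may work in the union of the finitely many components meeting the compact set $F$. Because $F$ is compact, we can choose $W$ to be an open neighborhood of $F$ with compact closure contained in $U$. By hypothesis (2), $F$ is an ANR, and it is closed in the metrizable space $W$. Hence there is an open neighborhood $V\subseteq W$ of $F$ and a retraction $r\colon V\to F$. Shrinking $V$ to an open set $V'$ with $F\subset V'\subset\overline{V'}\subset V$ and $\overline{V'}$ compact, which is possible by local compactness and compactness of $F$, the restriction $r\restr{V'}$ exhibits $F$ as a retract of the relatively compact neighborhood $V'$ in $W$. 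Finney's theorem then gives that $F$ is contractible.

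\textbf{Main obstacle.} The only subtle point is that Finney's statement is global: it requires injectivity of the $f_n$ on the whole source manifold, which hypothesis (1) only gives on $U$. Passing to the open submanifold $W\subset U$ resolves this, since nothing in Finney's conclusion refers to points of $A$ outside a neighborhood of $F$. One should also check that Theorem~\ref{thm:finney} does not require connectedness or a boundaryless hypothesis that $W$ could violate. Open subsets of manifolds are manifolds without added boundary, and if connectedness were required one could take $W$ to be a finite union of connected open sets and apply the argument componentwise, though $F$ might then meet several components. As a fallback, I would inspect Finney's proof: it compares $f_n$ near $F$ with $f_\infty$ via local degree or Čech cohomology arguments supported in a neighborhood of $F$, so it localizes verbatim.
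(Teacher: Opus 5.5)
Your proposal is correct and is essentially the paper's proof, which consists of the single step of applying Finney's theorem to the restricted sequence $(f_n\restr{U})_{n\geq n_0}$. Your additional care supplies two details the paper leaves implicit. You shrink to an open, relatively compact $W\subset U$ so that the domain is genuinely a manifold, and you derive the retract-of-a-relatively-compact-neighborhood hypothesis from the ANR property.
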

\begin{proof}
	Apply Finney's theorem to the restricted sequence $(f_n\restr{U})_{n\geq n_0}$.
\end{proof}

\begin{theorem}[Contractibility of the fibres of the boundary of a homeomorphism]\label{thm:fibres_contractible_boundary_of_homeo}
	Let $A=A_{\circ}\cup A_{\infty}$ be a first-countable Hausdorff space such that $A_{\circ}\cap A_{\infty}=\emptyset$, $A_{\infty}$ is closed in $A$ and $A_{\infty}\subset \overline{A_{\circ}}$. Let $B=B_{\circ}\cup B_{\infty}$ satisfy the same condition.

	Let $\Phi\colon (A;A_{\circ},A_{\infty})\to (B;B_{\circ},B_{\infty})$ be a continuous map of triples, i.e.\ $\Phi\colon A\to B$ is a continuous map satisfying $\Phi(A_{\circ})\subseteq B_{\circ}$ and $\Phi(A_{\infty})\subseteq B_{\infty}$. Assume that the following conditions are satisfied~:
	\begin{enumerate}
		\item\label{it:properness} $\Phi$ is proper onto its image~;
		\item $\Phi\restr{A_{\circ}}$ is an open embedding~;
		\item\label{it:local_product} The space $A$ is locally a product near $A_{\infty}$~: for any compact subset $K\subset A_{\infty}$, there is a neighborhood $U$ in $A$, a first-countable Hausdorff space $T\cup\{\infty\}$ where $\infty\in\overline{T}$ and a homeomorphism of triples
		\begin{equation*}
			j\colon (U_{\infty}\times (T\cup\{\infty\});U_{\infty}\times T,U_{\infty}\times \{\infty\})\to (U;U_{\circ},U_{\infty}),
		\end{equation*}
		where $U_{\infty}=U\cap A_{\infty}$, $U_{\circ}=U\cap A_{\circ}$ and $j(x,\infty)=x$ for all $x\in U_{\infty}$~;
		\item\label{it:contractibility_at_infinity} $\Phi$ has contractible image at infinity~: any point $y\in\Phi(A_{\infty})$ has a basis of neighborhoods $V$ such that $\Phi(A_{\circ})\cap V$ is contractible~;
		\item\label{it:retract_property} Nonempty fibres of $\Phi\restr{A_{\infty}}\colon A_{\infty}\to B_{\infty}$ are retracts of relatively compact neighborhoods in $A_{\infty}$.
	\end{enumerate}
	Then nonempty fibres of $\Phi\restr{A_{\infty}}\colon A_{\infty}\to B_{\infty}$ are contractible.
\end{theorem}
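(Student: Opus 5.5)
The plan is a direct Finney-type argument. We push the fibre slightly into $A_{\circ}$ using the local product structure, contract it there using contractibility at infinity transported by the embedding $\Phi\restr{A_{\circ}}$, and project back to $A_{\infty}$ with the product projection and the retraction. Fix $y\in\Phi(A_{\infty})$ and let $F\coloneqq\Phi^{-1}(y)\cap A_{\infty}$. Since $\Phi(A_{\circ})\subseteq B_{\circ}$ and $y\in B_{\infty}$, we have $F=\Phi^{-1}(y)$, which is compact by properness (\ref{it:properness}). By (\ref{it:retract_property}), choose a relatively compact neighborhood $W$ of $F$ in $A_{\infty}$ and a retraction $r\colon W\to F$. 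Apply (\ref{it:local_product}) to $K=F$ to get $U$, $T\cup\{\infty\}$ and $j$, and let $\pi\colon U\to U_{\infty}$, $\pi(j(x,t))=x$, be the continuous projection. Set $W'\coloneqq\pi^{-1}(W\cap U_{\infty})$, an open neighborhood of $F$ in $A$ (shrinking $W$ to be open in $A_{\infty}$ if needed).

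The first key step is a shrinking lemma. For any neighborhood $O$ of $F$ in $A$, there is a neighborhood $V$ of $y$ with $\Phi^{-1}(V)\subseteq O$. Argue by contradiction using first countability of $B$. Take a decreasing countable basis $V_k$ at $y$ and points $x_k\in\Phi^{-1}(V_k)-O$. The set $\{\Phi(x_k)\}\cup\{y\}$ is compact in $\Phi(A)$, so properness puts all $x_k$ in a compact set. Using first countability of $A$ (compact plus first countable Hausdorff gives sequential compactness, or argue with cluster points), a subsequence converges to some $x$ with $\Phi(x)=y$, so $x\in F\subseteq O$. This contradicts $x_k\notin O$ with $O$ open. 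Applying this with $O=W'$, and using (\ref{it:contractibility_at_infinity}), we obtain $V$ with $\Phi^{-1}(V)\subseteq W'$ and $\Phi(A_{\circ})\cap V$ contractible.

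Next comes the push-off. Pick $t_n\in T$ with $t_n\to\infty$, which is possible by first countability and $\infty\in\overline T$. Define $g_n\colon F\to U_{\circ}$ by $g_n(x)=j(x,t_n)$. By compactness of $F$ and continuity of $j$, for $n$ large $j(F\times\{t_n\})\subseteq\Phi^{-1}(V)$; this is a tube-lemma argument on $F\times(T\cup\{\infty\})$. Fix such an $n$. Then $\Phi\circ g_n\colon F\to\Phi(A_{\circ})\cap V$ is null-homotopic, via some $G\colon F\times[0,1]\to\Phi(A_{\circ})\cap V$. Since $\Phi\restr{A_{\circ}}$ is a homeomorphism onto the open set $\Phi(A_{\circ})$, the map $H\coloneqq(\Phi\restr{A_{\circ}})^{-1}\circ G$ is a null-homotopy of $g_n$. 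It lands in $A_{\circ}\cap\Phi^{-1}(V)\subseteq W'$. Consequently $r\circ\pi\circ H\colon F\times[0,1]\to F$ is well defined and continuous. At time $0$ it equals $r(\pi(j(x,t_n)))=r(x)=x$, and at time $1$ it is constant. Hence $\mathrm{id}_F$ is null-homotopic and $F$ is contractible.

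The main obstacle is the shrinking lemma, i.e.\ making ``proper onto its image'' together with first countability yield that preimages of small neighborhoods of $y$ concentrate near $F$. In particular, the compact set $\{\Phi(x_k)\}\cup\{y\}$ must lie in $\Phi(A)$, and limits must be extracted sequentially. A secondary point is making sure $W'$ is open and that $\Phi^{-1}(V)\subseteq U$, so that $\pi$ is defined on the image of $H$. Both are handled by choosing $O=W'\subseteq U$ in that lemma.
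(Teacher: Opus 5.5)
Your proof is correct. It uses the same ingredients as the paper's proof: the shrinking claim obtained from properness and first countability, pushing $F$ into $A_\circ$ along the product structure, pulling back a contraction of $\Phi(A_\circ)\cap V$ through the embedding $\Phi\restr{A_\circ}$, then projecting and retracting. You assemble them more directly, however.

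The paper uses the shrinking claim only to place $\Phi^{-1}(V)$ inside a product chart $U$ around $\overline{\Omega}$, and then argues by contradiction. Assuming $F$ is not contractible, connectedness of the null-homotopy forces it to meet $\partial\Omega\times T$; otherwise it would stay in $\Omega\times T$, and $r\circ\proj_\Omega\circ h$ would contract $F$. A second appeal to properness then produces a point of $F\cap\partial\Omega=\emptyset$.

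You instead apply the shrinking lemma to the tube $W'=\pi^{-1}(W\cap U_\infty)$ itself, so the null-homotopy stays over $W$ automatically. That is exactly what the paper's boundary-and-connectedness argument establishes indirectly.

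Your route needs slightly less. The product structure is used only near the compact set $F$, not near $\overline{\Omega}$, so relative compactness in condition (5) becomes superfluous. No contradiction argument and no second use of properness are needed. The paper's version mainly has the merit of following Finney's original argument and its accompanying picture.

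One small point: if the neighborhood $U$ from condition (3) is not open, $W'$ is only a neighborhood of $F$ rather than an open set. Your shrinking lemma works verbatim for arbitrary neighborhoods, so nothing changes.
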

\begin{proof}
	Let $y$ be a point of $B_{\infty}$ whose fibre $F\coloneqq\Phi^{-1}(y)\subseteq A_{\infty}$ is nonempty. By condition~\eqref{it:retract_property}, $F$ has a relatively compact neighborhood $\Omega\subset A_{\infty}$ which retracts onto $F$. Let $r\colon\Omega\epic F$ be a retraction, i.e.\ $r$ is continuous and $r\restr{F}=\id_F$. The boundary $\partial\Omega$ of $\Omega$ in $A_{\infty}$ is compact and disjoint from $F$.

	Let us assume that $F$ is not contractible. We will show that $\Phi$ sends a point of $\partial\Omega$ to $y$, which is impossible.

	Let $\cV$ be a nested (countable) basis of neighborhoods of $y$ in $B$ such that $\Phi(A_{\circ})\cap V$ is contractible for each $V\in\cV$, as in condition~\eqref{it:contractibility_at_infinity}.
	\begin{claim}
		Let $U$ be a neighborhood of $F$ in $A$. Up to passing to a subbasis, we have $\Phi^{-1}(V)\subseteq U$ for all $V\in\cV$.
	\end{claim}
	\begin{proof}[Proof of the claim]
		Otherwise, there is a sequence $(V_n)_{n\in\N}$ in $\cV$ with $\bigcap_{n\in\N}V_n=\{y\}$ and a sequence $(a_n)_{n\in\N}$ such that $a_n\in\Phi^{-1}(V_n)-U$ for each $n\in\N$. In particular, $\Phi(a_n)\in V_n$ for all $n$, which implies that $\Phi(a_n)\to y$. By properness of $\Phi$, up to a subsequence, $(a_n)_{n\in\N}$ converges to a point $a$ in $F=\Phi^{-1}(y)$. But then $(a_n)_{n\in\N}$ must eventually lie inside $U$, a contradiction.
	\end{proof}

	Let $U$ be a neighborhood of $\overline{\Omega}$ in $A$, $j$ and $T\cup\{\infty\}$ be as in condition~\eqref{it:local_product}. To simplify notation, we identify $U$ with $U_{\infty}\times (T\cup\{\infty\})$ and $\Phi$ with $\Phi\circ j$ from now on, i.e.\ we consider $\Phi$ as a map from $U_{\infty}\times (T\cup\{\infty\})$ to $B$.
	
	By the above claim, we can assume that $\Phi^{-1}(V)\subseteq U$ for all $V\in\cV$.

	\begin{claim}\label{claim:nullhomotopy_meets_boundary}
		Let $V\in\cV$. The intersection $\Phi^{-1}(V)\cap (\partial\Omega\times T)$ is nonempty.
	\end{claim}
	\begin{proof}[Proof of the claim]
		By compactness of $F$ and by the product structure of $U$, there exists $\tau\in T$ such that $F\times\{\tau\}$ is contained in $\Phi^{-1}(V)$, hence $F\times\{\tau\}\subset\Phi^{-1}(V)\cap U_{\circ}$.

		Since $\Phi_{\circ}\coloneqq\Phi\restr{A_{\circ}}$ is an open embedding, $\Phi^{-1}(V)\cap U_{\circ}=\Phi^{-1}(V)\cap A_{\circ}=\Phi_{\circ}^{-1}(V\cap\Phi(A_{\circ}))$ is contractible and there exists a nullhomotopy
		\begin{equation*}
			h\colon F\times [0,1]\to \Phi^{-1}(V)\cap U_{\circ}
		\end{equation*}
		where $h(x,0)=(x,\tau)$ for each $x\in F$ and $h(-,1)$ is constant.

		We now show that the image of $h$ meets $\partial\Omega\times T$, which will conclude the proof of the claim (see Figure~\ref{fig:boundary_of_homeo}). Assume it does not. We have
		\begin{equation*}
			U_{\circ}-(\partial\Omega\times T)=(U_{\infty}-\partial\Omega)\times T=(\Omega\times T)\cup ((U_{\infty}-\overline{\Omega})\times T),
		\end{equation*}
		which is a decomposition of the open set $U_{\circ}-(\partial\Omega\times T)$ as the union of two disjoint open subsets. Since the image of $h$ is connected (as $h(-,1)$ is constant), it must lie entirely inside one or the other of these two subsets. But $h(F,0)=F\times\{\tau\}\subset\Omega\times T$, hence the image of $h$ must be entirely contained in $\Omega\times T$.

		We can now consider the continuous map defined as the composition
		\begin{equation*}
			H\colon F\times [0,1]\xrightarrow{h} \Omega\times T\xrightarrow{\proj_{\Omega}}\Omega\xrightarrow{r} F,
		\end{equation*}
		which is a homotopy from $H(-,0)=\id_F$ to a constant map $H(-,1)$. This contradicts our assumption that $F$ is not contractible and proves the claim.
	\end{proof}

	Now, let $(V_n)_{n\in\N}$ be a sequence in $\cV$ with $\bigcap_{n\in\N}V_n=\{y\}$. By the above claim, for each $n\in\N$ there exists $(x_n,\tau_n)\in(\partial\Omega\times T)\cap\Phi^{-1}(V_n)$. Thus $\Phi(x_n,\tau_n)\to y$. By properness of $\Phi$ and up to a subsequence, this implies that $(x_n,\tau_n)\to (x,\tau)$ for some $(x,\tau)\in\Phi^{-1}(y)=F\times\{\infty\}$. In particular $x_n\to x$. But $x_n\in\partial\Omega$ for all $n\in\N$ and $\partial\Omega$ is compact, hence $x\in F\cap\partial\Omega=\emptyset$, which is impossible.
\end{proof}

\begin{figure}[ht]
		\centering
		\begin{tikzpicture}
			\node[anchor=south west,inner sep=0] (image) at (0,0) {\includegraphics[width=0.95\textwidth]{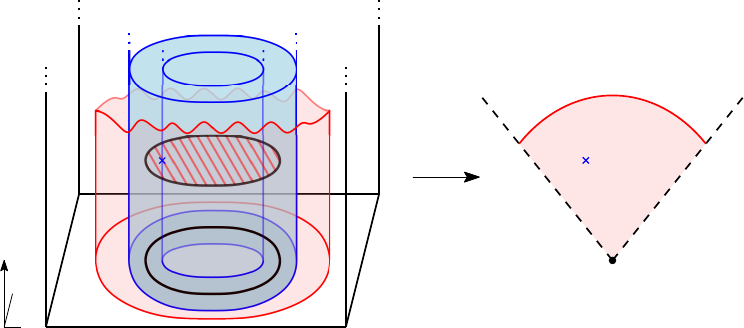}};
			\begin{scope}[x={(image.south east)},y={(image.north west)}]
			\node[black] at (0.58,0.67) {$F\times \tau$};
			\node[black] at (0.805,0.17) {$y$};
			\node[black] at (0.03,0.5) {$A_{\circ}$};
			\node[black] at (0.034,0.045) {$A_{\infty}$};
			\node[black] at (-0.02,0.15) {$T$};
			\node[black] at (-0.02,0) {$\infty$};
			\node[black] at (1,0.5) {$B_{\circ}$};
			\node[black] at (1,0.19) {$B_{\infty}$};
			\node[blue] at (0.28,0.93) {$\Omega\times T$};
			\node[blue] at (0.06,0.87) {$\partial\Omega\times T$};
			\node[black] at (0.359,0.11) {$F$};
			\node[blue] at (0.28,0.128) {$\Omega$};
			\node[black] at (0.6,0.5) {$\Phi$};
			\node[red] at (0.59,0.85) {\small $h(F\times [0,1])$};
			\node[red] at (0.6,0.28) {$\Phi^{-1}(V)$};
			\node[red] at (0.88,0.6) {$V$};
			\node[blue] at (0.83,0.465) {\small $\Phi(x_n,\tau_n)$};
			
			\draw[black] (0.377,0.51) -- (0.56,0.645);
			\draw[blue] (0.07,0.845) -- (0.174,0.73);
			\draw[red] (0.33,0.54) -- (0.57,0.83);
			\draw[red] (0.43,0.32) -- (0.555,0.285);
			\end{scope}
		\end{tikzpicture}
		\caption{Illustration of the proof of Claim~\ref{claim:nullhomotopy_meets_boundary} in the proof of Theorem~\ref{thm:fibres_contractible_boundary_of_homeo}. The nullhomotopy of $F\times\{\tau\}$ inside $\Phi^{-1}(V)\cap A_{\circ}$ must meet $\partial\Omega\times T$. This point of intersection $(x_n,\tau_n)$ (blue cross) is sent by $\Phi$ to a point $\Phi(x_n,\tau_n)$ in $V$.}\label{fig:boundary_of_homeo}
\end{figure}

The condition that $\Phi$ has contractible image at infinity closely resembles the \emph{cell-like} (or \emph{$UV^{\infty}$}) property from \emph{shape theory} (see e.g.\ Lacher~\cite{MR251714} or Daverman~\cite{MR0872468}). For ANRs, being cell-like is equivalent to being contractible. While Theorem~\ref{thm:fibres_contractible_boundary_of_homeo} might follow from general results (e.g.\ combining the local product and contractibility at infinity conditions to prove that the fibres of $\Phi\restr{A_{\infty}}$ are cell-like), the formulation above directly applies to our situation and provides a self-contained proof.

\section{Minimally parabolic structures}\label{sec:minimally_parabolic_structures}
The goal of this section is to prove that the bending map is a homeomorphism onto its image for minimally parabolic hyperbolic structures on manifolds with rank-two cusps (Theorem~\ref{thmA} of the introduction).
\begin{theorem}\label{thm:minimally_parabolic}
	Let $M$ be a hyperbolizable 3-manifold with rank-two cusps. The bending map
	\begin{equation*}
		\bb\colon\cMP(M)\to\cML(\partial M)	
	\end{equation*}
	is a homeomorphism onto its image.
\end{theorem}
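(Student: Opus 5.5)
The plan is to exhibit $\bb\colon\cMP(M)\to\cML(\partial M)$ as a continuous limit of injective maps obtained by hyperbolic Dehn filling of the rank-two cusps, and then conclude with Corollary~\ref{cor:finney_variant} and Corollary~\ref{cor:contractible_compact_real_analytic_spaces_are_singletons}. Since $M$ has rank-two cusps it admits no Fuchsian structure, so $\cMP(M)=\cMP^{\nf}(M)$. On $\cMP(M)$ the tubular topology agrees with the usual one on the image $\cP_{\nc}(M)$, because there are no closed leaves of weight $\pi$. By Theorem~\ref{thm:Lecuire_continuous_proper_on_non_Fuchsian_locus}, $\bb$ is then continuous and proper onto $\cP_{\nc}(M)$, and by Theorem~\ref{thm:realizability_criterion} its image is exactly $\cP_{\nc}(M)$.

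It therefore suffices to prove injectivity. A continuous proper injection from the manifold $\cMP(M)\cong\cT(\partial M)$ onto the open subset $\cP_{\nc}(M)$ of $\cML(\partial M)$ is a homeomorphism onto its image; alternatively one can use invariance of domain, since the dimensions agree. The injectivity argument proceeds in four steps.

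\textbf{Step 1 (approximation maps).} Choose slopes $\mathbf{s}^n\to\infty$ on each component of $T$ such that the filled manifolds $M_n$ are hyperbolizable. Then $\partial M_n=\partial M$, and $M_n$ has no rank-two cusps. Define $F_n\colon\cMP(M)\to\cCC(M_n)$ by sending $\sigma$ to the convex co-compact structure on $M_n$ with the same conformal boundary, via Theorem~\ref{thm:simultaneous_uniformization} on both sides. Each $F_n$ is a homeomorphism onto a component. One must check that $\Mod_0$ is compatible, that is, that compressing disks of $M$ and $M_n$ induce the same twists on $\partial M$, or restrict to the relevant component. We also need $F_n(\sigma)$ to be non-Fuchsian. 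This holds for $n$ large, because Fuchsian structures on $M_n$ would force $M_n\cong S\times\R$; that situation can occur only for finitely many fillings, and such slopes can be excluded. By Theorem~\ref{thm:injectivity_convex_cocompact}, the maps $g_n\coloneqq\bb\circ F_n\colon\cMP(M)\to\cML(\partial M)$ are injective.

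\textbf{Step 2 (continuous convergence).} Show that if $\sigma_n\to\sigma$ in $\cMP(M)$, then $g_n(\sigma_n)\to\bb(\sigma)$. First, the Brock--Bromberg drilling theorem gives bilipschitz maps, with constants tending to $1$, between the complement of the filled cusp tubes in $F_n(\sigma)$ and the complement of cusp neighborhoods in $\sigma$. Combining this with Corollary~\ref{cor:simultaneous_uniformization_equicontinuous}, which controls the conformal boundary uniformly in $n$, yields strong convergence $F_n(\sigma_n)\to\sigma$. Next, the convex core boundaries $\partial C(F_n(\sigma_n))$ are convex pleated surfaces. Using Lemma~\ref{lem:convex_or_even_pleated_surfaces_closed}, together with the fact that the surfaces avoid deep thin parts (a bound on their injectivity radius away from $\partial$), extract limits of these pleated surfaces. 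The limit is a convex pleated surface in $\sigma$ homotopic to $\partial C(\sigma)$, and it equals $\partial C(\sigma)$ because the convex cores converge geometrically. Proposition~\ref{prop:Lecuire_continuity_bending_lamination_if_pleated_surfaces_converge} then gives convergence of the bending laminations.

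\textbf{Step 3 (conclusion).} Fix $\lambda\in\cP_{\nc}(M)$ and let $F=\bb^{-1}(\lambda)$. By properness, $F$ is compact. By Proposition~\ref{prop:real_analycity}, $F$ is a real-analytic space, so by Lemma~\ref{lem:real_analytic_spaces_are_ANRs} it is an ANR. The manifolds $\cMP(M)$ and $\cML(\partial M)$ have the same dimension. Corollary~\ref{cor:finney_variant}, applied with the global injectivity of the $g_n$, shows that $F$ is contractible. Corollary~\ref{cor:contractible_compact_real_analytic_spaces_are_singletons} then shows that $F$ is a single point.

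\textbf{Main obstacle.} The hardest step is Step 2. Lecuire's continuity argument is written for pleated surfaces in a fixed topological manifold, whereas here the ambient manifolds $M_n$ vary and the representations $\pi_1(M)\to\pi_1(M_n)$ are non-faithful. The key difficulty is to rule out the convex core boundaries of $F_n(\sigma_n)$ travelling into the filled tubes. I would first try to bound them away from the filled tubes using the uniform geometry of $\partial C$: by Sullivan/Epstein--Marden, $\partial C$ is uniformly close to the conformal boundary in the thick part. With that bound, the pleated surfaces live in the bilipschitz-controlled region, and the limiting argument of Lecuire applies essentially verbatim.
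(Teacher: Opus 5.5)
Your overall strategy is the paper's: Dehn filling approximations $F_n$, strong convergence $F_n(\sigma_n)\to\sigma$ via Brock--Bromberg plus Corollary~\ref{cor:simultaneous_uniformization_equicontinuous}, Lecuire-style convergence of convex-core boundaries, then Corollary~\ref{cor:finney_variant} and real-analyticity. The gap is in Step~1, where you claim that the $g_n=\bb\circ F_n$ are \emph{globally} injective.

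\textbf{The gap.} Both $\cMP(M)$ and $\cCC(M_n)$ are isotopy-marked and identified with $\cT(\partial M)$. So $F_n$ is onto $\cCC(M_n)$, and incidentally your $\Mod_0$ worry is moot. Theorem~\ref{thm:injectivity_convex_cocompact} only gives injectivity on $\cCC^{\nf}(M_n)$, and $\bb$ is constant on the positive-dimensional Fuchsian locus. Global injectivity of $g_n$ therefore requires that $M_n$ carry no Fuchsian structure at all, i.e.\ that $M_n$ is not an $I$-bundle (handlebodies included). You assert that this fails for only finitely many fillings, but give no argument. This is a nontrivial topological finiteness statement about exceptional fillings, of Wu/Gabai type on $\partial$-reducible or product fillings. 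It also does not follow from the geometry you set up: transported to $\cT(\partial M)$ via $\partial M\cong\partial M_n$, the Fuchsian locus of $\cCC(M_n)$ could drift off to infinity as $n$ grows. Your convergence arguments only control compact subsets of $\cT(\partial M)$.

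\textbf{The fix.} The paper sidesteps this entirely, because Corollary~\ref{cor:finney_variant} only needs injectivity of $g_n$ on some neighbourhood $U$ of the fibre $K=\bb^{-1}(\lambda)$, for $n$ large.
\begin{itemize}
\item Take $U$ relatively compact, and suppose $F_n(\sigma_n)$ is Fuchsian for infinitely many $n$ with $\sigma_n\in U$.
\item Extract $\sigma_n\to\sigma'\in\overline{U}$. By Corollary~\ref{cor:convergence_dehn_filling}, $F_n(\sigma_n)\to\sigma'$ strongly.
\item This is impossible: Fuchsian convex cores have zero volume, while $C(\sigma')$ has positive volume.
\end{itemize}
You already have this strong convergence from Step~2, so replace global injectivity by this local statement.

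\textbf{A secondary imprecision in Step 2.} Saying the limit ``equals $\partial C(\sigma)$ because the convex cores converge geometrically'' is too quick. Homotopy does not pin down the boundary component in the compressible case, and a limit map into $\partial\CH(\rho)$ need not a priori be a covering. The paper argues as follows.
\begin{itemize}
\item The rank-two cusps force $C_{\widetilde g}$ to have nonempty interior. So the limit is convex rather than even, and it has no atom of weight $\pi$.
\item Lemma~\ref{lem:convex_pleated_surface_locally_injective} then gives local injectivity, so $\widetilde g$ is a covering onto a component of $\partial\CH(\rho)$.
\item The framing identifies this covering with $\widetilde f$.
\end{itemize}
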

The image of the bending map is the subset $\cP_{\nc}(M)$ described by the realization theorem of Bonahon--Otal and Lecuire (Theorem~\ref{thm:realizability_criterion}), consisting of measured laminations on $\partial M$ without closed leaves of weight $\geq\pi$ and satisfying the annulus and disk conditions.

Let $M$ be a hyperbolizable manifold with rank-two cusps. Let $\rho\colon\pi_1(M)\to\Isom(\HH^3)$ be a minimally parabolic structure on it and $\Gamma\coloneqq\rho(\pi_1(M))$ the associated Kleinian group. Let $C_1,\dots,C_k$ be its torus cusp neighborhoods, together with choices of meridians and longitudes (i.e.\ a choice of markings $\Z^2\cong\pi_1(C_i)$). Given a tuple of \emph{slopes}, i.e.\ pairs of coprime integers $\mathbf{s}=\{(p_i,q_i)\}^k_{i=1}$ (corresponding to primitive curves on the torus cusp neighborhoods $\partial C_i$), denote by $M(\mathbf{s})$ the result of \emph{Dehn filling} $M-(C_1\cup\dots\cup C_k)$ along each torus boundary component $\partial C_i$ with slope $(p_i,q_i)$. Except for finitely many \emph{exceptional slopes} (finitely many in each entry), the resulting 3-manifold $M(\mathbf{s})$ admits a hyperbolic structure~\cite{MR648524,MR2680207,MR4468992}. The inclusion $M\monic M(\mathbf{s})$ induces an isotopy class of homeomorphisms $\partial M\cong\partial M(\mathbf{s})$. See also~\cite{MR648524,comar1996hyperbolic,MR2079598,MR4466646,MR4468992} for more about hyperbolic Dehn filling.

Pick a sequence $(\mathbf{s}^{n})_{n\in\N}$ of non-exceptional slopes going to infinity and denote by $M_n$ the resulting sequence of topological Dehn fillings. For each $n\in\N$, there is a natural inclusion $j_n\colon M\monic M_n$ that induces an identification $\partial j_n\colon\partial M\cong\partial M_n$. Let
\begin{equation*}
	F_n\colon\cMP(M)\rightarrow\cCC(M_n)
\end{equation*}
be the map that sends a minimally parabolic structure on $M$ to the convex co-compact structure on $M_n$ with identical conformal boundary. For each $n\in\N$, let $\bb_n\coloneqq\bb\circ F_n$.
\[\begin{tikzcd}
	& {\cT(\partial M)} \\
	{\cMP(M)} && {\cCC(M_n)} \\
	& {\cML(\partial M)}
	\arrow["\cong", from=2-1, to=1-2]
	\arrow["{F_n}", from=2-1, to=2-3]
	\arrow["\bb"',bend right=10pt, from=2-1, to=3-2]
	\arrow["{\bb_n}"{pos=0.9},bend left=25pt, shift right, from=2-1, to=3-2]
	\arrow["\cong"', from=2-3, to=1-2]
	\arrow["\bb", from=2-3, to=3-2]
\end{tikzcd}\]

Using the geometrically finite version of the hyperbolic Dehn filling theorem combined with a continuity result of Lecuire~\cite{MR2464096}, we show in Proposition~\ref{prop:continuous_convergence_drilling} that $\bb_n$ converges continuously to $\bb$.

Injectivity of $\bb$ then follows as in~\cite{dularschlenker2024pleating}, by Finney's theorem (its variant Corollary~\ref{cor:finney_variant}) and the convex co-compact case~\cite{dularschlenker2024pleating}. See Section~\ref{sec:conclusion_proof_minimally_parabolic} for the conclusion of the proof.

\subsection{Hyperbolic drilling and Dehn filling}\label{sec:hyperbolic_dehn_filling}
We shall need the Brock--Bromberg drilling theorem~\cite[Theorem 6.2]{MR2079598}, as stated in~\cite[Theorem 1.1]{MR4468992}.
\begin{theorem}[Drilling theorem, Brock--Bromberg~\cite{MR2079598}]\label{thm:brock_bromberg_drilling}
	Given $J>1$ and $\epsilon>0$ with $\epsilon\leq\epsilon_0$, there exists $\ell_0(\epsilon,J)>0$ such that the following holds: If $N$ is a 3-manifold with a minimally parabolic structure $\sigma$ and $L\subset N$ is a geodesic link whose total length is less than $\ell_0(\epsilon,J)$, then $N-L$ admits a minimally parabolic structure $\sigma'$ with the same conformal structure on the ideal boundary as $\sigma$, and the inclusion
	\begin{equation*}
		j\colon (N-L,\sigma')\monic (N,\sigma)
	\end{equation*}
	restricts to a $J$-bilipschitz diffeomorphism on the complement of $\epsilon$-thin tubes about $L$.
\end{theorem}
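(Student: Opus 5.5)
This drilling theorem is quoted from Brock--Bromberg (in the form of \cite[Theorem 1.1]{MR4468992}) and is used here as a black box. Still, here is the route I would take to prove it. It is the cone-deformation argument of Hodgson--Kerckhoff as adapted by Brock--Bromberg.

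\textbf{Setup.} Start from the minimally parabolic structure $\sigma$ on $N$, in which $L$ is a short geodesic link. View $(N,\sigma)$ as a hyperbolic cone-manifold with cone angle $2\pi$ along $L$. Then deform the cone angle $\alpha$ from $2\pi$ down to $0$, keeping the conformal boundary fixed. The angle-$0$ endpoint is a complete structure on $N-L$ in which each component of $L$ has become a rank-two cusp.

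\textbf{Step 1: local deformation.} Show that the deformation exists locally, i.e.\ that the cone-angle parameter together with the conformal boundary gives local coordinates. This uses Hodgson--Kerckhoff rigidity extended to geometrically finite manifolds with boundary at infinity. One represents infinitesimal deformations by harmonic strain fields on the complement of a tube, normalized to be conformal at infinity, and shows that boundary terms at infinity vanish. A Weitzenb\"ock formula then gives vanishing of $L^2$ harmonic deformations preserving the angle and the conformal structure.

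\textbf{Step 2: uniform control (main obstacle).} While the total length of $L$ stays below a universal bound, one needs a tube of large radius about $L$ to persist along the whole deformation. Given the tube, one bounds the $L^2$ norm of the harmonic strain field outside the tube by a constant times $\ell\,\abs{d\alpha}$ via the boundary term on the tube. It follows that $\ell$ stays small and decreases monotonically, and that no degeneration occurs, so the path continues to $\alpha=0$. This estimate is the main technical step. I would first try Hodgson--Kerckhoff's packing argument for the tube radius, combined with their length-versus-angle differential inequality. The key point is to check that the contribution of the ends at infinity vanishes, which holds because the deformation is conformal there (Ahlfors--Bers).

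\textbf{Step 3: from $L^2$ bounds to bilipschitz.} Convert the $L^2$ bounds into pointwise bounds on the complement of the $\epsilon$-thin tubes, using mean-value estimates for harmonic deformations. Integrating along the path $\alpha\in[0,2\pi]$ then yields maps that are $J$-bilipschitz outside the $\epsilon$-thin tubes about $L$, provided $\ell$ is smaller than some $\ell_0(\epsilon,J)$. Finally, identify the limit structure as a minimally parabolic structure $\sigma'$ on $N-L$ with the same conformal boundary: no new rank-one cusps appear, since the ends vary quasi-conformally with fixed conformal structure.
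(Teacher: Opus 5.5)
Your outline is the right one. It is the Hodgson--Kerckhoff cone-deformation argument, extended by Bromberg to geometrically finite cone-manifolds, on which Brock--Bromberg's theorem rests; the paper itself only cites that theorem. However, Step~3 fails as written.

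A mean-value estimate bounds $|\omega(p)|$ by the $L^2$ norm of $\omega$ on an embedded ball about $p$. Its constant blows up as the injectivity radius at $p$ tends to $0$; equivalently, a fixed-size ball in the universal cover covers its image with unbounded multiplicity. So the estimate controls the strain only on the $\epsilon$-thick part. The complement of the $\epsilon$-thin tubes about $L$, however, also contains two kinds of thin region. First, the rank-two cusps of $N$, which are allowed since $\sigma$ is only assumed minimally parabolic. Second, Margulis tubes about short geodesics of $N$ not in $L$, which can be arbitrarily deep since only the length of $L$ is constrained. Integrating your pointwise bounds therefore gives a map that is $J$-bilipschitz on the thick part only. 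The statement requires this on the whole complement of the $\epsilon$-thin tubes about $L$.

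What is missing is a separate estimate in these other thin components, uniform in the cone angle; note that the thin part itself moves during the deformation. One option is to bound the strain inside each such tube or cusp by its size on the boundary torus, which lies in the thick part where your mean-value bound applies. This can be done by decomposing the harmonic strain field there into a rotationally symmetric model deformation plus a remainder, in the spirit of Hodgson--Kerckhoff's analysis of the singular tube. Another option is to control the change in complex length of the other short geodesics and in the cusp shapes. You would then extend the thick-part map across these tubes and cusps by a bilipschitz extension argument.

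The paper's only use of the theorem, in Corollary~\ref{cor:convergence_dehn_filling}, needs control on thick parts only, so your version would suffice there. It does not, however, prove the statement as formulated.
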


\begin{corollary}\label{cor:convergence_dehn_filling}
	Let $M$ be a hyperbolizable 3-manifold with rank-two cusps $C_1,\dots,C_k$. Let ${(\mathbf{s}^n)}_{n\in\N}$ be a sequence of tuples of slopes on $C_1,\dots,C_k$ going to infinity, i.e.\ such that for each $i=1,\dots,k$, the sequence of slopes ${(s_i^n)}_{n\in\N}$ on $C_i$ goes to infinity. Let $M_n$ be the result of Dehn filling $M$ along $\mathbf{s}^n$, $j_n\colon M\monic M_n$ the induced inclusion and $\partial M\cong\partial M_n$ the induced homeomorphism of the ideal boundaries.

	For each $n\in\N$, let $\sigma_n\in\cCC(M_n)$ be a convex co-compact structure on $M_n$, with conformal boundary $m_n\in\cT(\partial M)$. Let $\sigma\in\cMP(M)$ be a minimally parabolic structure on $M$, with conformal boundary $m\in\cT(\partial M)$.
	
	If $m_n\to m$ in $\cT(\partial M)$, then:
	\begin{enumerate}
		\item the inclusions $j_n\colon (M,\sigma)\to (M_n,\sigma_n)$ are isotopic to embeddings, diffeomorphic onto their images, whose restrictions to the $\epsilon_n$-thick part of $(M,\sigma)$ are $J_n$-bilipschitz, where $\epsilon_n\searrow 0$ and $J_n\searrow 1$,
		\item their holonomies
		\begin{equation*}
			\rho_n\colon\pi_1(M)\epic\pi_1(M_n)\to\Isom^+(\HH^3)
		\end{equation*}
		converge strongly to the holonomy $\rho\colon\pi_1(M)\to\Isom^+(\HH^3)$ of $\sigma$, i.e.\ $\rho_n\to\rho$ pointwise and $\rho_n(\pi_1(M))\to\rho(\pi_1(M))$ in the Chabauty topology,
	\end{enumerate}
	i.e.\ $\sigma_n\to\sigma$ strongly in the sense of Section~\ref{sec:algebraic_and_geometric_convergence}.
\end{corollary}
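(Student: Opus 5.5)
Fix $\sigma\in\cMP(M)$ with conformal boundary $m$. The plan is to combine the Brock--Bromberg drilling theorem, applied to the filled manifolds $(M_n,\sigma_n)$, with the uniform continuity of simultaneous uniformization (Corollary~\ref{cor:simultaneous_uniformization_equicontinuous}) to absorb the difference between $m_n$ and $m$.

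\textbf{Step 1: the cores of the filling solid tori are short.} Consider the auxiliary convex co-compact structures $\sigma'_n\in\cCC(M_n)$ with conformal boundary exactly $m$. The manifold $M_n$ is the filling of $M$ and $(M,\sigma)$ is its drilling along the core link $L_n$. Hyperbolic Dehn filling for geometrically finite structures says that, as $\mathbf{s}^n\to\infty$, the structure on $M_n$ with conformal boundary $m$ exists, the core geodesics $L_n$ have total length $\ell_n\to 0$, and $(M_n,\sigma'_n)$ converges geometrically to $(M,\sigma)$. We take this from~\cite{comar1996hyperbolic,MR4466646,MR4468992}. If only the existence statement is available, we can still deduce shortness: normalized lengths of $s^n_i$ on the cusp tori of $\sigma$ tend to infinity, so Hodgson--Kerckhoff-type estimates force the cores to be short. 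This is the delicate input.

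\textbf{Step 2: drilling with constant conformal boundary.} Fix $\epsilon\le\epsilon_0$ and $J>1$. Once $\ell_n<\ell_0(\epsilon,J)$, Theorem~\ref{thm:brock_bromberg_drilling} applied to $(M_n,\sigma'_n)$ and $L_n$ gives a minimally parabolic structure on $M_n-L_n\cong M$ with conformal boundary $m$. By Ahlfors--Bers (Theorem~\ref{thm:simultaneous_uniformization}) in $\cMP(M)$, this structure is $\sigma$; the marking must be checked to agree, because the drilling respects the boundary identification $\partial j_n$. The inclusion $(M,\sigma)\monic(M_n,\sigma'_n)$ is then $J$-bilipschitz off the $\epsilon$-thin tubes about $L_n$. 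These tubes correspond to neighbourhoods of the rank-two cusps, which lie inside the $\epsilon'$-thin part of $(M,\sigma)$ for some $\epsilon'$ comparable to $\epsilon$. Letting $\epsilon\to0$ and $J\to1$ along a diagonal gives conclusion (1) for $\sigma'_n$.

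\textbf{Step 3: replacing $m$ by $m_n$.} Since $m_n\to m$, we have $d_{\Teich}(m_n,m)\to0$, where distances are computed in $\cT(\partial M)=\cT(\partial_\infty\sigma'_n)$. By Corollary~\ref{cor:simultaneous_uniformization_equicontinuous}, applied in $\cQI_0(\sigma'_n)\ni\sigma_n$, there is a marking-respecting $e^{\omega(d_{\Teich}(m_n,m))}$-bilipschitz diffeomorphism $(M_n,\sigma'_n)\to(M_n,\sigma_n)$. The key point is that $\omega$ is independent of $n$. Composing with the maps from Step 2 yields (1) with constants $J_n\cdot e^{\omega(\cdot)}\to1$. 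We may need to shrink $\epsilon_n$ slightly to account for thin parts moving under the bilipschitz map, which is harmless.

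\textbf{Step 4: strong convergence.} Conclusion (2) follows from (1) by standard arguments (\cite[Chapter 4]{MR3586015}, \cite[\S4]{MR1726737}). Bilipschitz embeddings with $J_n\to1$ on exhausting thick parts, composed with the marking, give algebraic convergence $\rho_n\to\rho$ after conjugating by lifts at a basepoint. They also give geometric convergence, because every compact subset of $(M,\sigma)$ eventually lies in the domain of almost-isometries. The isotopy condition holds because the embeddings are the inclusions $j_n$.

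\textbf{Main obstacle.} The main obstacle is Step 1: establishing that the filled structure with conformal boundary $m$ exists for large $n$ and has short cores, with uniform control. I would first try to quote it directly from~\cite{MR4468992}.
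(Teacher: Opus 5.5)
Your proposal is correct and follows essentially the same route as the paper. You introduce the auxiliary fillings $\sigma'_n$ with conformal boundary $m$ and short cores, apply the Brock--Bromberg drilling theorem to compare $(M,\sigma)$ with $(M_n,\sigma'_n)$, and use the $n$-independent modulus $\omega$ of Corollary~\ref{cor:simultaneous_uniformization_equicontinuous} to pass from $\sigma'_n$ to $\sigma_n$. You then upgrade geometric convergence to strong convergence by a standard argument; the paper cites Benedetti--Petronio, Theorem E.1.13, for this step. The shortness of the filling cores, which you flag as the delicate input, is likewise taken as known from hyperbolic Dehn filling theory in the paper.
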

\begin{proof}
	For each $n\in\N$, let $\sigma'_n\in\cCC(M_n)$ be the Dehn filling of $M$ with slope $\mathbf{s}^n$, i.e.\ the convex co-compact structure on $M_n$ with conformal boundary $m\in\cT(\partial M)$. In $M_n$, the cusps $C_1,\dots,C_k$ of $M$ are filled with core curves $c^n_1,\dots,c^n_k\subset M_n$, whose $\sigma'_n$-length goes to zero as $n\to +\infty$, since the slopes go to infinity.

	By the drilling Theorem~\ref{thm:brock_bromberg_drilling}, the inclusions $j_n\colon (M,\sigma)\to (M_n,\sigma'_n)$ become arbitrarily close to isometries on the complement of arbitrarily small cusp neighborhoods.

	By assumption, $d_{\Teich}(m_n,m)\to 0$ where $m_n$ and $m$ are the conformal boundaries of $\sigma_n$ and $\sigma'_n$, respectively. It then follows from uniform equicontinuity of simultaneous uniformization (Corollary~\ref{cor:simultaneous_uniformization_equicontinuous}) that
	\begin{equation*}
		d_{\Lip}(\sigma_n,\sigma'_n)\leq\omega(d_{\Teich}(m_n,m))\to 0,
	\end{equation*}
	where $\omega$ is independent of the manifolds $M_n$.

	Therefore, the inclusions $j_n\colon (M,\sigma)\to (M_n,\sigma_n)$ also become arbitrarily close to isometries on the complement of arbitrarily small cusp neighborhoods. In other words, the hyperbolic manifolds $(M_n,\sigma_n)$ converge geometrically to $(M,\sigma)$.

	By~\cite[Theorem E.1.13]{MR1219310} (see also~\cite[\S 4]{MR1726737}), this implies strong convergence of the holonomies (see Sections~\ref{sec:deformation_spaces} and \ref{sec:algebraic_and_geometric_convergence}).
\end{proof}

\subsection{Polygonal approximations and local injectivity}\label{sec:polygonal_approximations}
Let $\widetilde{f}\colon\HH^2\to\HH^3$ be a convex pleated map and $w$ a path in $\widetilde{f}(\HH^2)$, which we view as a path in the boundary of the convex set $C_{\widetilde{f}}$. Following Keen--Series~\cite{MR1331998}, a \emph{polygonal approximation} to $w$ is a finite sequence $\cP=\{(x_i,P_i)\}_{i=0}^k$ where the $x_i\in w$ are linearly ordered along $w$, with $x_0$ and $x_k$ the endpoints of $w$, and where $P_i$ is a support plane of $C_{\widetilde{f}}$ at $x_i$, such that $P_i\cap P_{i-1}\neq\emptyset$ for $i=1,\dots,k$. Let $\theta_i\coloneqq\theta(P_{i-1},P_i)\in[0,\pi)$ be the exterior dihedral angle between $P_{i-1}$ and $P_i$, and $d_i\coloneqq d_w(x_{i-1},x_i)$ the distance from $x_{i-1}$ to $x_i$ along $w$, for the path metric on $\widetilde{f}(\HH^2)$. The approximation is an \emph{$(\epsilon,\delta)$-approximation} if
\begin{equation*}
	\max_{i}\theta_i<\epsilon\quad\text{and}\quad\max_{i}d_i<\delta.
\end{equation*}
For every $\epsilon,\delta>0$, every path $w$ admits an $(\epsilon,\delta)$-approximation~\cite[Lemma 3.2]{MR2464096}.

\begin{proposition}[Error estimate, Keen--Series~{\cite[Proposition 4.8]{MR1331998}}]\label{prop:error_estimate_keen_series}
	There exists a universal constant $K>0$ and a function $s(\epsilon)$, $0<s(\epsilon)<1$, such that if $\cP$ is an $(\epsilon,s(\epsilon))$-approximation to a path $w$ in a convex pleated surface, where $0<\epsilon<\pi/2$, then
	\begin{equation*}
		\abs{\sum_{\cP}\theta_i-i(\beta,w)}<K\epsilon\ell(w)\quad\text{and}\quad\abs{\sum_{\cP}d_i-\ell(w)}<K\epsilon\ell(w),
	\end{equation*}
	where $\ell(w)$ is the length of $w$ for the induced metric on the pleated surface and $\beta$ is its bending lamination.
\end{proposition}
Keen and Series state the estimate for the boundary of the convex core of a hyperbolic 3-manifold~; the proof is local and applies readily to any convex pleated map.

The following lemma shall be used in Proposition~\ref{prop:continuous_convergence_drilling}.
\begin{lemma}\label{lem:convex_pleated_surface_locally_injective}
	Let $g\colon\HH^2\to\HH^3$ be a convex pleated map, with pleating locus $L$ and bending measure $\mu$. For any geodesic path $k$ in $\HH^2$, if $i(\mu,k)<\pi$ then the restriction $g\restr{k}$ is injective.
\end{lemma}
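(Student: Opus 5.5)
Parametrize $k\colon[0,\ell]\to\HH^2$ by arc length and suppose, for contradiction, that $g(k(s))=g(k(t))$ for some $s<t$. Write $k'=k\restr{[s,t]}$. The image $w=g(k')$ is a closed loop lying in $\partial C_g$. It has length $\ell(w)=t-s>0$, because $g$ is a path-isometry and is isometric on plaques and leaves. Its total bending is $i(\mu,k')\le i(\mu,k)<\pi$.

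The plan is to show that a path in the boundary of a convex set whose total turning of support planes is less than $\pi$ cannot return to its starting point. I will use polygonal approximations, via Proposition~\ref{prop:error_estimate_keen_series}, to reduce this to a statement about a piecewise-geodesic path.

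First, fix $\epsilon<\pi/2$ small and choose an $(\epsilon,s(\epsilon))$-approximation $\{(x_i,P_i)\}_{i=0}^m$ of $w$. Then $\sum\theta_i<\pi-\eta$ for some $\eta>0$ once $K\epsilon\ell(w)$ is small, and $\sum d_i\approx\ell(w)$.

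Next, I will use the local structure of the pleated map. Because the plaques and leaves of $g$ meeting $k'$ are embedded isometrically, the path $k'$ crosses the pleating locus transversally and is developed into $\HH^3$ as a broken geodesic. Consider the unit tangent vector $v(u)$ of $g\circ k'$ at points where it exists, and an outward normal $n(u)$ of a support plane at $g(k'(u))$. The key step is a monotonicity estimate. Let $x_0=g(k(s))$ and let $P_0$ be the support plane there. Consider the signed distance from $g(k'(u))$ to a suitable totally geodesic plane $Q$ through $x_0$ orthogonal to $P_0$ and containing the normal direction. Equivalently, consider the projection of the path onto the geodesic through $x_0$ tangent to $v(s)$. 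I aim to show that this projection is strictly increasing as long as the accumulated bending stays below $\pi$. Along each flat piece, the direction of travel turns relative to its initial direction by at most the accumulated dihedral angle, because bending a geodesic across a line tilts its tangent by at most the dihedral angle. So the angle between $v(u)$ and $v(s)$ stays below $\pi-\eta$ throughout. A naive reading of this bound does not give monotone progress, since angle $<\pi$ still allows backward motion. The correct refinement is to use convexity. All the support planes $P_i$ contain $C_g$ on one side, and consecutive planes intersect.

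I would argue in the developing picture as follows. The dihedral angles accumulate to less than $\pi$, so the half-spaces $H_{P_0}$ and $H_{P_m}$ intersect in a wedge of interior angle greater than $\eta$. The polygonal path $x_0,\dots,x_m$ runs along $\partial(\bigcap_i H_{P_i})$ and wraps around less than a half turn. By a planar/spherical comparison, its endpoint cannot coincide with its starting point unless all $d_i=0$. Concretely, I would project orthogonally onto the geodesic that is the intersection direction complementary to the bending lines. One could also use an induction on $m$ together with the hyperbolic law of cosines, showing that $d(x_0,x_j)$ is bounded below by $c(\eta)\sum_{i\le j} d_i$.

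Letting $\epsilon\to0$ then gives $d(x_0,x_m)\ge c(\eta)\ell(w)>0$, which contradicts $x_m=x_0$.

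I expect the main obstacle to be this last comparison step. It requires a quantitative lower bound on the displacement of a polygonal path on a convex surface with total bending less than $\pi$, uniform in the fineness of the approximation. If the direct estimate is messy, my fallback is to use convexity of $C_g$ differently. Take a support plane at the midpoint and show that the two halves of the loop lie on opposite sides of a plane transverse to $P_0$, which forces them to be disjoint.
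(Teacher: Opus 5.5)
Your reduction to a subarc with $g(k(s))=g(k(t))$ and your choice of an $(\epsilon,s(\epsilon))$-approximation with $\sum\theta_i<\pi$ match the paper. The step that actually proves the lemma is missing, however: you need that a broken path whose turning is controlled by the $\theta_i$ cannot close up, and none of your sketched routes works as stated.

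\begin{itemize}
\item The chord polygon $x_0,\dots,x_m$ is the wrong object. Its edges $[x_{i-1},x_i]$ cut through $C_g$ rather than lying in the support planes. Nothing, including the Keen--Series estimate, bounds its exterior angles at the $x_i$ by the $\theta_i$.
\item The wedge argument is Euclidean intuition. In $\HH^3$ only consecutive support planes are known to meet, and $P_0$ and $P_m$ may be ultraparallel. Nothing bounds the angle between non-adjacent planes by $\sum\theta_i$, so you cannot conclude that $H_{P_0}\cap H_{P_m}$ is a wedge of angle $>\eta$.
\item Comparing $v(u)$ with $v(s)$ has no meaning in $\HH^3$ without parallel transport along the path. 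Projection onto a fixed geodesic is not monotone, as you note yourself.
\item The law-of-cosines induction and the midpoint-plane fallback are not carried out.
\end{itemize}

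Two ingredients close the gap, and neither needs a uniform constant $c(\eta)$ or the limit $\epsilon\to 0$; a single approximation suffices.

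\emph{First}, replace the chord polygon by one adapted to the support planes. Discard repeated consecutive planes. Take vertices $p_0=g(k(s))$, $p_i\in P_{i-1}\cap P_i$ for $1\le i\le m$, and $p_{m+1}=g(k(t))$. Choose them so that consecutive edges become collinear when $P_{i-1}$ is unfolded onto $P_i$ along their common line: unfold all the planes, draw the straight segment, then fold back. Then $[p_i,p_{i+1}]\subset P_i$. The incoming and outgoing unit vectors at $p_i$ make the same angle $\phi$ with the hinge line, so their inner product is $\cos^2\phi+\sin^2\phi\cos\theta_i\ge\cos\theta_i$. Hence the exterior angle satisfies $\alpha_i\le\theta_i$, and $\sum_{i\ge 1}\alpha_i<\pi$.

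\emph{Second}, if $g(k(s))=g(k(t))$ this polygon is closed. Fenchel's theorem in the Hadamard manifold $\HH^3$ (Alexander--Bishop) gives $\sum_{i=0}^{m}\alpha_i\ge 2\pi$, where $\alpha_0\le\pi$ is the exterior angle at the closing vertex. Therefore $\sum_{i\ge1}\alpha_i\ge\pi$, a contradiction.

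This is the paper's argument. The qualitative total-curvature bound replaces the quantitative displacement estimate that you identified as your main obstacle.
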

\begin{proof}
	Let $a$ and $b$ be the endpoints of $k$. It suffices to show that $g(a)\neq g(b)$. Indeed, injectivity then follows by restricting to all subpaths of $k$.
	
	Let $\epsilon>0$ be such that $K\epsilon\ell(k)<\frac{1}{2}(\pi-i(\mu,k))$. By Keen--Series error estimate (Proposition~\ref{prop:error_estimate_keen_series}), there exists a $(\epsilon,s(\epsilon))$-approximation $\cP=\{(x_i,P_i)\}_{i=0}^n$ to the path $k$, with $x_0=a$ and $x_n=b$, such that
	\begin{equation}\label{eq:proof_local_injectivity_of_slightly_bent_arcs}
		\sum_{i=1}^n\theta(P_{i-1},P_i)<i(\mu,k)+K\epsilon\ell(k)<\pi.
	\end{equation}
	Passing to a sub-approximation, we can assume that consecutive planes are distinct. In particular, $\theta(P_{i-1},P_i)>0$ for all $i=1,\dots,n$.

	Let $p$ be the polygonal path in $\HH^3$ with consecutive vertices $p_0,p_1,\dots,p_n,p_{n+1}$ where $g(a)=p_0$ and $g(b)=p_{n+1}$ and, for each $i\in\{1,\dots,n\}$,
	\begin{enumerate}
		\item $p_{i}\in P_{i-1}\cap P_i$, and
		\item the consecutive segments $[p_{i-1},p_i]$ and $[p_i,p_{i+1}]$ become collinear if one ``unfolds'' the planes $P_{i-1}$ and $P_i$ at their intersection line.
	\end{enumerate}
	This can be achieved by unfolding all the planes, tracing a straight geodesic from $g(a)$ to $g(b)$, then folding the planes back to their original position.

	For each $i\in\{1,\dots,n\}$, let $\alpha_i\coloneqq\pi-\widehat{p_{i-1},p_i,p_{i+1}}$ be the exterior angle formed by the segments $[p_{i-1},p_i]$ and $[p_{i},p_{i+1}]$ at $p_i$. The second condition above ensures that $\alpha_i\leq\theta(P_{i-1},P_i)$ for each $i\in\{1,\dots,n\}$, hence, by \eqref{eq:proof_local_injectivity_of_slightly_bent_arcs},
	\begin{equation}\label{eq:proof_local_injectivity_of_slightly_bent_arcs_2}
		\sum_{i=1}^n\alpha_i<\pi.
	\end{equation}

	Assume that $g(a)=g(b)$. Then $p$ is a closed polygonal path with exterior angles $\alpha_1,\dots,\alpha_n$ and an additional exterior angle $\alpha_0$ at the point $g(a)=g(b)$. From Fenchel's theorem for non-positively curved spaces (e.g.\ see~\cite[Corollary 2.4]{MR1459103}), it follows that
	\begin{equation*}
		\sum_{i=0}^n \alpha_i\geq 2\pi.
	\end{equation*}
	Thus, since $\alpha_0\leq\pi$,
	\begin{equation*}
		\sum_{i=1}^n\alpha_i\geq 2\pi-\alpha_0\geq\pi,
	\end{equation*}
	contradicting \eqref{eq:proof_local_injectivity_of_slightly_bent_arcs_2}.
\end{proof}

\subsection{Continuity of the bending map under high Dehn filling}\label{sec:continuity_bending_Dehn_filling}
\begin{proposition}\label{prop:continuous_convergence_drilling}
	The maps $\bb_n\colon\cMP(M)\to\cML(\partial M)$ converge continuously to the bending map $\bb\colon\cMP(M)\to\cML(\partial M)$, i.e.\ for any sequence $\sigma_n\to\sigma$ in $\cMP(M)$, we have $\bb_n(\sigma_n)\to\bb(\sigma)$ in $\cML(\partial M)$.
\end{proposition}
\begin{proof}
	Let $\sigma_n\to\sigma$ in $\cMP(M)$. It suffices to show that, up to a subsequence, we have $\bb_{n}(\sigma_n)\to\bb(\sigma)$. Indeed, a sequence converges to a limit if and only if every subsequence has a further subsequence converging to that limit.

	By Corollary~\ref{cor:convergence_dehn_filling}, the Dehn fillings $\sigma'_n\coloneqq F_n(\sigma_n)$ converge strongly to $\sigma$. Thus, there are nearly isometric embeddings
	\begin{equation*}
		\phi_n\colon (M,\sigma)\dasharrow (M_n,\sigma'_n),
	\end{equation*}
	defined on larger and larger compact subsets of $(M,\sigma)$. For $\epsilon>0$ and for all $n$ large enough, the domain of $\phi_n$ contains the $\epsilon$-truncated convex core ${C(\sigma)}^{\geq\epsilon}$, i.e.\ its intersection with the $\epsilon$-thick part of $\sigma$. By a theorem of McMullen~\cite[Theorem 4.1]{MR1726737}, strong convergence implies that the sequence of compact subsets $(\phi_n^{-1}(C(\sigma'_n)^{\geq\epsilon}))_{n\in\N}$ of $(M,\sigma)$ converges to $C(\sigma)^{\geq\epsilon}$ with respect to the Hausdorff distance.

	From now on, the argument is very close to the argument in \cite[\S 4]{MR2464096} and proceeds in three steps:
	\begin{enumerate}
		\item We first show that the convex-core boundaries converge to a limiting convex pleated surface (up to a subsequence);
		\item We identify this limiting pleated surface with the convex-core boundary of $\sigma$;
		\item We conclude convergence of the bending laminations by Proposition~\ref{prop:Lecuire_continuity_bending_lamination_if_pleated_surfaces_converge} (see~\cite{MR2464096}).
	\end{enumerate}

	Choose $\epsilon>0$ small enough so that the $\epsilon$-thin part of $\sigma$ consists only of its rank-two cusps and such that they lie deep in $C(\sigma)$, and such that the $\epsilon$-thin part of $\sigma'_n$ consists only of the filling tubes.

	Let $S\subseteq\partial M$ be a connected component, corresponding to a connected component of $\partial M_n$ as well, for each $n\in\N$, via the inclusions $M\monic M_n$. Let $\partial_S C(\sigma)$ and $\partial_S C(\sigma'_n)$, $n\in\N$, be the corresponding components of the boundary of the convex cores. By Hausdorff convergence and compactness, there is a sequence of baseframes $\tau_n$ on $\partial_S C(\sigma'_n)$ such that $\phi_n^{-1}(\tau_n)$ converges to a baseframe $\tau$ on $\partial_S {C(\sigma)}^{\geq\epsilon}$. We may choose those baseframes to be based at a point in the interior of a plaque of the boundary of the convex core, such that the first two vectors of $\tau$ provide a baseframe of that plaque, while the third vector is normal to the plaque and pointing out of the convex core.

	Pick a fixed baseframe $\tau^3$ in $\HH^3$ and $\tau^2$ in $\HH^2$. Consider the framed universal coverings
	\begin{equation*}
		\pi_{n}\colon(\HH^3,\tau^3)\to (M_n,\tau_n)\text{ and }\pi\colon (\HH^3,\tau^3)\to (M,\tau),
	\end{equation*}
	whose induced holonomy representations $\rho_n$ (precomposed with $\pi_1(M)\epic\pi_1(M_n)$) converge strongly to $\rho$.

	Let $\widetilde{f}_n\colon\HH^2\to\HH^3$ and $\widetilde{f}\colon\HH^2\to\HH^3$ be the lifted pleated maps, sending $\tau^2$ to $\tau^3$. The induced framed coverings $\pi_n\circ\widetilde{f}_n\colon\HH^2\to\partial_S C(\sigma'_n)$ and $\pi\circ\widetilde{f}\colon\HH^2\to\partial_S C(\sigma)$, pulled back along $\iota_n\colon S\to\partial_S C(\sigma'_n)\subset M_n$ and $\iota\colon S\to\partial_S C(\sigma)\subset M$, yield discrete and faithful representations $r_n,r\colon\pi_1(S)\to\Isom^+(\HH^2)$. Finally, consider the restricted holonomies $\eta_n\coloneqq\rho_n\circ \iota_*$ and $\eta\coloneqq\rho\circ\iota_*$. This yields pleated surfaces $((\widetilde{f}_n,r_n,\eta_n))_{n\in\N}$ and $(\widetilde{f},r,\eta)$.

	Passing to a subsequence, we can assume that $((\widetilde{f}_n,r_n,\eta_n))_{n\in\N}$ converges to some pleated surface $(\widetilde{g},r',\eta')$. This follows from an argument similar to \cite[\S 5.2]{MR0903850}. More precisely:
	\begin{itemize}
		\item We already know that $\eta_n\to\eta$, since $\rho_n\to\rho$ strongly. Thus, we have $\eta'=\eta$.
		\item The induced metrics on $\partial_S C(\sigma'_n)$ are contained in a compact subset of Teichm\"uller space, since the lengths of curves on them are bounded by a function of the lengths of curves with respect to $m_n$ (using~\cite{MR1810370}) and $m_n\to m$. Thus $(r_n)_{n\in\N}$ has a convergent subsequence.
		\item For each $n\in\N$, $\widetilde{f}_n$ sends $\tau^2$ to a fixed baseframe $\tau^3$, hence Arzel\`a--Ascoli's theorem implies that, up to a subsequence, $(\widetilde{f}_n)_{n\in\N}$ converges compactly to a framed map $\widetilde{g}\colon\HH^2\to\HH^3$, which must be pleated and $\pi_1(S)$-equivariant~\cite[5.2.5]{MR0903850}.
	\end{itemize}
	It remains to argue that $(\widetilde{g},r',\eta)$ actually coincides with $(\widetilde{f},r,\eta)$. Again, the argument is similar to \cite[\S 4]{MR2464096}.

	First, $\widetilde{g}(\HH^2)$ is contained in the convex hull $\CH(\rho)$ since $(\phi_n^{-1}(C(\sigma'_n)^{\geq\epsilon}))_{n\in\N}$ converges to $C(\sigma)^{\geq\epsilon}$ for the Hausdorff distance, and because the $\epsilon$-thick part of the convex core contains its boundary.

	By Lemma~\ref{lem:convex_or_even_pleated_surfaces_closed}, the pleated surface $(\widetilde{g},r',\eta)$ is convex or even, and the sign in the definition must be ``$-$'' because of the choice of framing.
	
	Let $\widetilde{L}\subset\HH^2$ be the pleating locus of $\widetilde{g}$. For any plaque $P\subset\HH^2-\widetilde{L}$, we can find plaques $P_n$ of $(\widetilde{f}_n,r_n,\eta_n)$, $n\in\N$, such that $(H_{P_n}^-)_{n\in\N}$ converges to $H_{P}^-$. Since the convex hull $\CH(\rho_n)$ is contained in $H_{P_n}^-$ for each $n\in\N$, we have $\CH(\rho)\subset H_{P}^-$ at the limit. Thus, $\CH(\rho)$ is contained in $C_{\widetilde{g}}$ (see Section~\ref{sec:pleated_surfaces}). Since $\CH(\rho)$ contains rank-two cusps, its interior cannot be empty, so neither is the interior of $C_{\widetilde{g}}$. Thus, $(\widetilde{g},r',\eta)$ is a convex pleated surface and $\widetilde{g}(\HH^2)$ is contained in $\partial\CH(\rho)$. Since $\widetilde{g}(\tau^2)=\tau^3$, $\widetilde{g}(\HH^2)$ is contained in the boundary component $C$ of $\partial\CH(\rho)$ containing $\tau^3$ and $\pi\restr{C}\colon (C,\tau^3)\to(\partial_S C(\sigma),\tau)$ is a framed covering.

	We claim that $\widetilde{g}\colon\HH^2\to C$ is a covering map. Since $\HH^2$ is complete, $C$ is complete and connected, it suffices to show that $\widetilde{g}\colon\HH^2\to C$ is a local isometry. Since it is a path-isometry, it suffices to show that it is a local homeomorphism or, by invariance of domain, that it is locally injective.

	Let $\widetilde{\mu}$ be the bending measure of $\widetilde{g}$. It has no atomic leaf of weight $\pi$, for otherwise $\CH(\rho)$ would have empty interior. Let $\theta\in (0,\pi)$ be such that all atomic leaves of $\widetilde{\mu}$ have weight $<\theta$. Thus, there exists $\delta>0$ such that any geodesic arc $k\subset\HH^2$ of length $\leq\delta$ satisfies $i(\widetilde{\mu},k)<\pi$. By Lemma~\ref{lem:convex_pleated_surface_locally_injective}, the restriction of $\widetilde{g}$ to any $\delta/2$-ball in $\HH^2$ is injective.
	
	Therefore, $\widetilde{g}$ is a covering map. Since $\HH^2$ is simply connected, $\widetilde{g}$ is a universal covering. By framing, it coincides with $\widetilde{f}$.

	Therefore, $(\widetilde{g},r',\eta)=(\widetilde{f},r,\eta)$ and Proposition~\ref{prop:Lecuire_continuity_bending_lamination_if_pleated_surfaces_converge} implies that $\bb_n(\sigma_n)\to\bb(\sigma)$, as required.
\end{proof}

\subsection{Injectivity of the bending map}\label{sec:conclusion_proof_minimally_parabolic}
We can now prove the main theorem of this section.
\begin{proof}[Proof of Theorem~\ref{thm:minimally_parabolic}]
	We need to prove that the bending map $\bb\colon\cMP(M)\to\cML(\partial M)$ is a homeomorphism onto its image. Since $M$ is assumed to have at least one rank-two cusp, it admits no Fuchsian structure. By Lecuire's Theorem~\ref{thm:Lecuire_continuous_proper_on_non_Fuchsian_locus}, $\bb$ is continuous and proper onto its image~\cite{MR2464096,lecuire2025propernessbendingmap}, in particular it has compact fibres. It remains to show that $\bb$ is injective.

	By Proposition~\ref{prop:continuous_convergence_drilling}, the sequence $(\bb_n)_{n\in\N}$ converges continuously to $\bb$. In order to apply the variant of Finney's Theorem (Corollary~\ref{cor:finney_variant}), we need to check that the first condition is satisfied.

	Let $\lambda\in\bb(\cMP(M))\subset\cML(\partial M)$ be a realizable bending lamination. Let $U\subset\cMP(M)$ be a relatively compact neighborhood of $K\coloneqq \bb^{-1}(\lambda)$. We claim that, for all $n$ large enough and for all $\sigma\in U$, the Dehn filling $F_n(\sigma)\in\cCC(M_n)$ is non-Fuchsian. Assume not, then passing to a subsequence, there exists a sequence $(\sigma_n)_{n\in\N}$ in $U$ such that $F_n(\sigma_n)\in\cCC(M_n)$ is Fuchsian for all $n\in\N$. By relative compactness of $U$, we can assume that $\sigma_n\to\sigma'\in\overline{U}$. Then $F_n(\sigma_n)\to\sigma'$ strongly by Corollary~\ref{cor:convergence_dehn_filling}, which is impossible since each $F_n(\sigma_n)$ has a convex core of volume zero while $\sigma'$ has a convex core of strictly positive volume.

	By the above paragraph, for $n$ large enough, $F_n(U)\subset\cCC^{\nf}(M_n)$ and $\bb_n\restr{U}=\bb\circ F_n\restr{U}$ is injective by the convex co-compact bending parameterization Theorem~\ref{thm:injectivity_convex_cocompact} from \cite{dularschlenker2024pleating}.

	Note also that $\cMP(M)\cong\cT(\partial M)$ and $\cML(\partial M)$ are manifolds of the same dimension.

	Therefore, by the variant of Finney's theorem (Corollary~\ref{cor:finney_variant}), compact ANR fibres of $\bb$ are contractible. By Proposition~\ref{prop:real_analycity}, Lemma~\ref{lem:real_analytic_spaces_are_ANRs} and the Corollary~\ref{cor:contractible_compact_real_analytic_spaces_are_singletons} of Borel--Haefliger Theorem~\ref{thm:real_analytic_spaces_have_fundamental_class}, fibres of $\bb$ are singletons, i.e.\ $\bb$ is injective.
\end{proof}

\section{Geometrically finite structures}\label{sec:geometrically_finite_structures}
The goal of this section is to prove Theorem~\ref{thm:injectivity_bending_map_rank_one_cusps}, which states that the bending map for non-Fuchsian geometrically finite structures
\begin{equation*}
	\bb\colon\cGF^{\nf}(M)\to\cBL(M)
\end{equation*}
is a homeomorphism for any hyperbolizable 3-manifold $M$. Surjectivity of $\bb$ is known by the realizability criterion (Theorem~\ref{thm:realizability_criterion}) of Bonahon--Otal and Lecuire, continuity and properness by Lecuire's work~\cite{MR2464096,lecuire2025propernessbendingmap}. The strategy is to apply Theorem~\ref{thm:fibres_contractible_boundary_of_homeo} to $\bb$ in order to conclude that its fibres are contractible and then argue as in the convex co-compact case using real-analyticity.

In Sections~\ref{sec:augmented_teichmuller_space} and~\ref{sec:conformal_parameterization_of_geometrically_finite_structures}, we show that $\cGF(M)$ is homeomorphic to a subspace of the augmented Teichm\"uller space of $\partial M$, from which the local product condition will follow. In Section~\ref{sec:space_of_bending_laminations_of_geometrically_finite_structures}, we describe the space $\cBL(M)$ of bending laminations of geometrically finite structures on $M$ and its tubular topology, defined by Lecuire. Finally, in Section~\ref{sec:contractibility_at_infinity_geometrically_finite_structures}, we check the contractibility at infinity condition and conclude the proof in Section~\ref{sec:injectivity_bending_map_rank_one_cusps}. Section~\ref{sec:punctured_surface_groups} is a brief discussion on the case of quasi-Fuchsian structures over punctured surfaces.

\subsection{Augmented Teichm\"uller space}\label{sec:augmented_teichmuller_space}
See Abikoff~\cite[Chapter II, \S 3]{MR590044} and Wolpert~\cite{MR2039996} for more details.
Let $S$ be a surface, $P\subset S$ a simple multicurve and $P\cup Q$ a maximal simple multicurve, i.e.\ a pants decomposition of $S$, together with a choice of \emph{seams} in order to define twist parameters. Let $p$ and $q$ be the number of components of $P$ and $Q$, respectively.

The \emph{Fenchel--Nielsen coordinates} with respect to $P\cup Q$ assign to each hyperbolic structure $\rho\in\cT(S)$ the tuple
\[
	(\ell_{\rho}(c),\tau_{\rho}(c))_{c\subset P\cup Q}
\]
where $\ell_{\rho}(c)\in\R_{>0}$ is the length of the curve $c$ with respect to $\rho$, and $\tau_{\rho}(c)\in\R$ is the twist parameter of $\rho$ along $c$. It will be more convenient to write $(\ell_{\rho}(c),\tau_{\rho}(c))$ as the complex number $\tau_{\rho}(c)+i(\ell_{\rho}(c))^{-1}\in\HH^2$. The Fenchel--Nielsen coordinates provide a homeomorphism
\begin{equation*}
	\cT(S)\to (\HH^2)^{p+q}\colon \rho\mapsto {(\tau_{\rho}(c)+i(\ell_{\rho}(c))^{-1})}_{c\subset P\cup Q}.
\end{equation*}

By allowing the lengths of the components of $P$ to decrease to zero, we obtain the \emph{extended Fenchel--Nielsen coordinates} for the union
\begin{equation*}
	\bigcup_{\emptyset\subseteq P'\subseteq P}\cT(S;P')
\end{equation*}
of the strata of the augmented Teichm\"uller space $\augmTeich(S)$, where $\cT(S;P')$ denotes the stratum with the simple multicurve $P'\subset S$ pinched and equals the Teichm\"uller space of the noded surface $S-P'$. Whenever a component of $P$ has length zero, its twist parameter is not defined~: there is no twist along nodes. The length $\ell(c)$ going to zero means that the imaginary part of $\tau(c)+i(\ell(c))^{-1}$ goes to infinity. More precisely, we obtain a bijection
\begin{equation}\label{eq:extended_FN}
	\bigcup_{\emptyset\subseteq P'\subseteq P}\cT(S;P')\to (\HH^2\cup\{\infty\})^p\times (\HH^2)^{q}.
\end{equation}

For the purpose of this paper, we focus on a specific stratum $\cT(S;P)$, which lies on the boundary of the main stratum $\cT(S)=\cT(S;\emptyset)$. We denote by $\cT(S;\emptyset,P)$ the union of those two strata, with the subspace topology induced from the augmented Teichm\"uller space $\overline{\cT}(S)$, described explicitly below. The image of $\cT(S;\emptyset,P)$ under the bijection~\eqref{eq:extended_FN} equals $\widehat{(\HH^2)^p}\times(\HH^2)^q$, where $\widehat{(\HH^2)^p}$ is the subspace $(\HH^2)^p\cup \{\infty\}^p$ of $(\HH^2\cup\{\infty\})^p$.

\begin{definition}\label{def:topology_on_union_of_strata}
	The union $\cT(S;\emptyset,P)=\cT(S)\cup\cT(S;P)$ is endowed with the geometric topology. In particular~:
	\begin{enumerate}
		\item The subspaces $\cT(S)$ and $\cT(S;P)=\cT(S-P)$ both inherit their usual topology~;
		\item The subspace $\cT(S;P)$ is closed in $\cT(S;\emptyset,P)$~;
		\item A sequence of metrics $(m_n)_{n\in\N}\subset\cT(S)$ converges to a point $m_{\infty}\in\cT(S;P)$ if and only if the following conditions hold~:
		\begin{enumerate}
			\item We have $\ell_{m_n}(P)\to 0$ when $n\to\infty$~;
			\item For any essential closed curve $c\subset S-P$, $\ell_{m_n}(c)\to\ell_{m_{\infty}}(c)$ when $n\to\infty$.
		\end{enumerate}
	\end{enumerate}
\end{definition}

Similarly, the topology on $\HH^2\cup\{\infty\}$ is such that a basis of neighborhoods of $\infty$ is given by the family of upper half-plane regions $(\{\infty\}\cup\{z\in\HH^2\mid\Im(z)>y\})_{y>0}$. In other words, a sequence $(z_n)_{n\in\N}$ in $\HH^2\cup\{\infty\}$ converges to $\infty$ if and only if $\Im(z_n)\to +\infty$.

\begin{proposition}\label{prop:product_structure_augmented_T}
	The bijection induced by the extended Fenchel--Nielsen coordinates
	\begin{equation*}
		\cT(S;\emptyset,P)\to \widehat{(\HH^2)^{p}}\times (\HH^2)^q\cong\widehat{(\HH^2)^{p}}\times\cT(S-P)
	\end{equation*}
	is a homeomorphism.
\end{proposition}
\begin{proof}
	First, the usual topology on $\cT(S)$ coincides with both the topology induced from Fenchel--Nielsen coordinates, and with the topology induced from the marked length spectrum~\cite{MR3053012}. Actually, the length of curves not intersecting $P$ does not depend on twist parameters along components of $P$. Conversely, twist parameters along components of $Q$ are determined by lengths of curves disjoint from $P$. The statement then follows. See~\cite[\S 3.3]{MR2742784} for more details, and also~\cite[\S 1.3]{MR442293}.
\end{proof}

\subsection{Conformal parameterization of geometrically finite structures}\label{sec:conformal_parameterization_of_geometrically_finite_structures}
By the quasi-conformal deformation theory of Kleinian groups, the space $\cGF(M)\subset\cSI(M)$ of isotopy classes of geometrically finite structures on $M$, endowed with the strong topology, is in bijection with a union of strata
\begin{equation*}
	\augmTeichPinch(\partial M)\coloneqq\bigcup_{P'}\cT(\partial M;P')\subset\augmTeich(\partial M),
\end{equation*}
where $P'$ runs over the \emph{pinchable simple multicurves} on $\partial M$. This bijection restricts to a homeomorphism on each stratum, and is actually a homeomorphism by the following theorem. In the incompressible boundary case, this is shown by Anderson and Lecuire in~\cite[Theorem D]{MR3134412} when restricting to structures without degenerate ends. For the compressible case, we provide a proof which is a combination and slight variation of Lecuire's proof of properness of the bending map~\cite{lecuire2025propernessbendingmap}, together with the aforementioned incompressible boundary case.

Before stating the result, we need the following lemma. We omit the proof, as it is the same as~\cite[Lemma 5.1]{lecuire2025propernessbendingmap} with fewer steps, using a result of Johannson on the finiteness of the subgroup of $\Mod(M)$ preserving doubly incompressible multicurves, see~\cite[Proposition 27.1]{MR551744} and~\cite[1.1, 2.2, 8.2, VIII]{MR551744} for the required definitions.
\begin{lemma}\label{lem:prop_discontinuous}
	The action of $\Mod(M)$ on $\augmTeichPinch(\partial M)$ is properly discontinuous.
\end{lemma}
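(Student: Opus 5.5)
Given a compact set $K\subset\augmTeichPinch(\partial M)$, I will show that $\{\phi\in\Mod(M)\mid \phi(K)\cap K\neq\emptyset\}$ is finite. I will follow \cite[Lemma 5.1]{lecuire2025propernessbendingmap}, dropping the steps that there handle bending laminations and the tubular topology. The first reduction is to a finite set of strata. A compact subset of $\augmTeichPinch(\partial M)$ meets only finitely many strata $\cT(\partial M;P')$, because a multicurve pinched in a limit of $K$ has length bounded by the Bers constant. Since $\Mod(M)$ permutes pinchable multicurves, it suffices to consider a sequence of distinct $\phi_n\in\Mod(M)$ and points $m_n\in K$ with $\phi_n(m_n)\in K$, all lying in fixed strata $P_1$ and $P_2=\phi_n(P_1)$. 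Passing to a subsequence, we may assume $m_n\to m$ and $\phi_n(m_n)\to m'$ in $K$, and we aim for a contradiction.

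The second step produces a finite filling set of curves whose images stay bounded. Using the extended Fenchel--Nielsen coordinates of Proposition~\ref{prop:product_structure_augmented_T} near $m$ and $m'$, lengths of curves disjoint from the pinched loci are controlled uniformly. I will fix a finite collection $\mathcal{C}$ of simple closed curves on $\partial M-P_1$ that fills each component of $\partial M-P_1$. Their $m_n$-lengths are bounded, so the $\phi_n(m_n)$-lengths of the curves $\phi_n(c)$, $c\in\mathcal{C}$, are bounded as well. Because $\phi_n(m_n)\to m'$, only finitely many isotopy classes of curves have bounded length on compact subsets of a stratum, and lengths near the nodes blow up away from the pinched curves. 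Hence, after passing to a subsequence, $\phi_n(c)$ is independent of $n$ for each $c\in\mathcal{C}$, and $\phi_n(P_1)$ is also constant.

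The third step applies Johannson's finiteness result. Set $\psi_n\coloneqq\phi_0^{-1}\phi_n$; these mapping classes fix each curve of $\mathcal{C}\cup P_1$ up to isotopy. Restricted to $\partial M$, $\psi_n$ therefore lies in the finite extension of the group generated by Dehn twists along $P_1$. The twists are exactly what the boundary-stratum coordinates fail to see, which is why the pinched case needs care. The goal is to choose $\mathcal{C}$ together with $P_1$ to form a doubly incompressible multicurve in Johannson's sense, so that \cite[Proposition 27.1]{MR551744} makes the stabilizer finite. This choice is the main obstacle. With compressible boundary, curves of short length can bound disks, and double incompressibility requires avoiding essential disks and annuli. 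I would try to follow Lecuire's enlargement of the multicurve, adding curves that meet every essential disk and annulus boundary; the disk and annulus conditions of pinchable multicurves (Definition~\ref{def:pinchable}) are what make $P_1$ itself admissible. Once the stabilizer is finite, infinitely many distinct $\phi_n$ is impossible, and this proves proper discontinuity.
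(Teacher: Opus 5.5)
Your plan matches the one the paper intends. The paper omits the proof and refers to \cite[Lemma 5.1]{lecuire2025propernessbendingmap} together with Johannson's finiteness of the stabilizer in $\Mod(M)$ of a doubly incompressible multicurve. However, your second step is wrong as stated, and your third step is left open exactly where the content lies.

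In step two you fix the stratum $P_1$ of the points $m_n$, take $\mathcal{C}$ filling $\partial M-P_1$, and assert that the $m_n$-lengths of $\mathcal{C}$ are bounded. Nothing prevents the limit $m\in K$ from lying in a deeper stratum $\cT(\partial M;P_\infty)$ with $P_1\subsetneq P_\infty$; compact subsets of $\augmTeichPinch(\partial M)$ typically contain such points. The components of $P_\infty-P_1$ are essential and non-peripheral in $\partial M-P_1$, so some curve of $\mathcal{C}$ crosses one of them. Since $\ell_{m_n}(P_\infty)\to 0$ (Definition~\ref{def:topology_on_union_of_strata}), the collar lemma sends the $m_n$-length of that curve to $+\infty$. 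So you do not obtain finitely many possibilities for the curves $\phi_n(c)$.

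The fix is to organize everything around the stratum of the limit rather than that of the sequence. Let $P_\infty$ be the pinched locus of $m$, which is pinchable, and choose a doubly incompressible multicurve $Q\supseteq P_\infty$, as the paper does in the proof of Theorem~\ref{thm:conformal_boundary_map_homeo}.
\begin{itemize}
\item Then $\ell_{m_n}(Q)$ is bounded: components of $P_\infty$ become short, and the others converge to their $m$-lengths. Hence $\ell_{\phi_n(m_n)}(\phi_n(Q))$ is bounded too.
\item Since $\phi_n(m_n)\to m'$, multicurves of bounded length cannot cross the pinched locus of $m'$, so they range in a finite set.
\item Along a subsequence $\phi_n(Q)$ is constant, so the $\phi_{n_0}^{-1}\phi_n$ are infinitely many distinct elements of the stabilizer of $Q$. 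This contradicts \cite[Proposition 27.1]{MR551744}.
\end{itemize}
No filling system and no discussion of twists along $P_1$ is needed.

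What your proposal does not supply is precisely the existence of such a $Q$ compatible with the limiting stratum. Your candidate does not work for three reasons:
\begin{itemize}
\item $\mathcal{C}\cup P_1$ is not a multicurve, since a filling system has intersecting curves.
\item Definition~\ref{def:pinchable} contains no disk condition.
\item $P_1$ by itself is in general not doubly incompressible; already $P_1=\emptyset$ fails when $\partial M$ is compressible.
\end{itemize}
Without this input, nothing in your argument excludes infinite-order elements of $\Mod(M)$ that act on $\partial M$, up to finite ambiguity, as multitwists along a pinchable multicurve. These are exactly the elements that would fix points of the boundary strata (there is no twist parameter along nodes) and destroy proper discontinuity.
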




\begin{theorem}\label{thm:conformal_boundary_map_homeo}
	The conformal boundary map $\mm\colon\cGF(M)\to\augmTeichPinch(\partial M)$ is a homeomorphism.
\end{theorem}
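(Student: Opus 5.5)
We already know from the preceding discussion that $\mm$ is a bijection onto $\augmTeichPinch(\partial M)$ that restricts to a homeomorphism on each stratum $\cGF(M;P)\to\cT(\partial M;P)$ (simultaneous uniformization, Theorem~\ref{thm:simultaneous_uniformization}, together with Marden stability, plus Thurston's pared hyperbolization Theorem~\ref{thm:pared_hyperbolization} for the list of strata). It therefore remains to prove two things for sequences crossing between strata. First, $\mm$ is continuous: if $\sigma_n\to\sigma$ strongly, then $\mm(\sigma_n)\to\mm(\sigma)$ in the geometric topology of Definition~\ref{def:topology_on_union_of_strata}. Second, $\mm^{-1}$ is continuous: if $\mm(\sigma_n)\to m_\infty=\mm(\sigma)$, then $\sigma_n\to\sigma$ strongly. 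Both spaces are first countable, so sequences suffice. Since each point lies in only finitely many strata closures locally, I would also reduce to sequences $\sigma_n\in\cGF(M;P_n)$ with $P_n\subseteq P$ and $\sigma\in\cGF(M;P)$, after passing to a subsequence with $P_n$ constant.

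For continuity of $\mm$, I would use the fact that strong convergence implies Hausdorff convergence of the truncated convex cores (McMullen, as in the proof of Proposition~\ref{prop:continuous_convergence_drilling}) and convergence of the domains of discontinuity in the sense of Carathéodory kernels. For each curve $c\subset\partial M-P$, the lengths $\ell_{\mm(\sigma_n)}(c)$ then converge to $\ell_{\mm(\sigma)}(c)$. For $c\subset P$, $\rho_n(c)$ converges to a parabolic element, so its complex length tends to zero. Bounds relating conformal lengths and convex-core lengths (the result of~\cite{MR1810370} used above) then force $\ell_{\mm(\sigma_n)}(c)\to0$. In the incompressible case this is exactly~\cite[Theorem D]{MR3134412}, which I would cite. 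The compressible case needs the isotopy markings handled, which are tracked by strong convergence.

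The main work is continuity of $\mm^{-1}$, and this is where I would follow Lecuire's properness argument~\cite{lecuire2025propernessbendingmap}. Suppose $\mm(\sigma_n)\to m_\infty$. The first step is to show that $(\sigma_n)$ is precompact in $\cSI(M)$, up to the action of $\Mod(M)$. Bounded conformal lengths of curves in $\partial M-P$ give bounded lengths of their geodesic representatives. Lengths of the $P$ curves go to zero. A filling family of curves with bounded length then gives algebraic precompactness in the compressible case via Lecuire's argument, using the disk-busting curves that remain short on the boundary. We pass to an algebraic limit $\sigma'$ and use Anderson--Canary and Brock--Bromberg-type results to upgrade this to strong convergence. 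The obstacle is that the isotopy markings may diverge, that is, $\sigma_n$ may converge only after precomposition by $\phi_n\in\Mod(M)$. Here Lemma~\ref{lem:prop_discontinuous} is the tool. Since $\mm$ is $\Mod(M)$-equivariant and continuous, $\mm(\sigma_n\cdot\phi_n)\to\mm(\sigma')$ while $\mm(\sigma_n)\to m_\infty$. Proper discontinuity then forces $\phi_n$ to take only finitely many values, so after a subsequence $\phi_n$ is constant and can be absorbed.

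Finally, I would identify the limit. The limit $\sigma'$ is geometrically finite with parabolic locus exactly $P$: no extra cusps appear, because $m_\infty$ has no further nodes and the non-$P$ lengths stay bounded below, and every curve of $P$ becomes parabolic. By continuity of $\mm$, $\mm(\sigma')=m_\infty$, and injectivity then gives $\sigma'=\sigma$. Since every subsequence has a further subsequence converging to $\sigma$, the whole sequence converges. I expect the delicate step to be the compactness, namely ruling out degenerate or geometrically infinite limits and new accidental parabolics in the compressible setting. For this I would import Lecuire's argument with the bending-lamination input replaced by the conformal-length input, which is simpler.
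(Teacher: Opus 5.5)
Your outline has the same architecture as the paper's proof. Continuity comes from Carath\'eodory convergence of the domains of discontinuity. Properness comes from algebraic compactness, an upgrade to strong convergence, and proper discontinuity of the $\Mod(M)$-action (Lemma~\ref{lem:prop_discontinuous}) to repair the isotopy markings. However, two steps do not work as written.

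\textbf{Pinched curves in the continuity step.} You deduce $\ell_{\mm(\sigma_n)}(c)\to0$ for $c\subset P-P_n$ from the vanishing complex length of $\rho_n(c)$, via Canary's bound. But \eqref{eq:canary_convex_core_boundary_metric} goes the other way: it bounds lengths in the manifold and on $\partial C(\sigma_n)$ from above by conformal lengths. A curve can be very short in the manifold while staying long at infinity. What works is the Carath\'eodory argument you mention just before. By Hejhal's theorem~\cite{MR349989} the Poincar\'e metrics converge on compacta, so a short horocyclic loop around the limiting puncture remains a short loop representing $c$ in $\mm(\sigma_n)$. This is how the paper gets continuity, via~\cite[Lemma 5.2]{MR3134412}.

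\textbf{The core of properness is missing.} This is the more serious gap. Before any strong-convergence criterion can be used, you must show that the algebraic limit $\rho_\infty$ is geometrically finite, and you defer exactly this to ``Lecuire's argument''. The tools you name for the upgrade also do not fit. Anderson--Canary-type criteria require that no new parabolics appear, which is precisely what fails when the curves of $P-P_n$ are pinched.

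The paper's mechanism runs as follows.
\begin{enumerate}
\item Canary's inequality, now in the correct direction, shows that the induced metrics $s_n$ on $\partial C(\sigma_n)$ subconverge to some $s_\infty\in\cT(\partial M-P)$.
\item The distance between $f_n(c_n)$ and $c_n^*$ is bounded uniformly. This is \cite[Lemma 17]{MR2144972}, extended to arbitrary bending using Keen--Series polygonal approximations and Bridgeman's average-bending bound. This makes Arzel\`a--Ascoli applicable.
\item The limit is a convex pleated surface bounding the convex core of $\rho_\infty$ (Claim~\ref{claim:convergence_pleated_surface_AA}), so $\rho_\infty$ is geometrically finite.
\item Only then does strong convergence follow. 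It comes from Kleineidam's criterion~\cite{MR2153905}, or from McMullen's criterion~\cite{MR1726737} together with Sugawa's bound~\cite{MR1837250}, which makes the new parabolics converge horocyclically.
\end{enumerate}

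\textbf{Algebraic compactness.} Relatedly, the curves of bounded conformal length all lie in $\partial M-P$, so they cannot fill $\partial M$. What Thurston's relative boundedness theorem~\cite{MR4556468} needs is that such curves, together with $P$, form a doubly incompressible multicurve. This uses the annulus condition in the definition of a pinchable multicurve, not just disk-busting.
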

\begin{proof}
	By the simultaneous uniformization theorem and the quasi-conformal deformation theory of Kleinian manifolds, the map $\mm$ is a bijection and restricts to a homeomorphism on each stratum. It remains to show that it is continuous and proper.

	\textbf{Continuity:} First, the conformal boundary map is continuous by~\cite[Lemma 5.2]{MR3134412}~: strong convergence of the holonomy representations implies Carath\'eodory convergence of the domains of discontinuity~\cite[Theorem 4.6.1]{MR3586015}, which implies compact convergence of their Poincar\'e metrics by Hejhal's multiply-connected version of the Carath\'eodory convergence theorem~\cite[Theorem 1]{MR349989}.
	
	\textbf{Properness:} The proof of properness follows the same outline as Lecuire's proof of properness of the bending map~\cite{lecuire2025propernessbendingmap}. First, we prove properness with respect to algebraic convergence, which is then upgraded into strong convergence, and eventually into convergence of isotopy classes of hyperbolic structures.

	Let $(\sigma_n)_{n\in\N}$ be a sequence in $\cGF(M)$. For each $n\in\N$, $\sigma_n$ has the following associated data~:
	\begin{itemize}
		\item its holonomy representation $\rho_n\in\cSH(M)$ and Kleinian group $\Gamma_n\subset\PSL(2,\C)$~;
		\item an isotopy class of homeomorphism $h_n\colon M\to \HH^3/\Gamma_n\eqqcolon M_n$~;
		\item the induced hyperbolic metric on the conformal boundary $m_n\in\cT(\partial M-P_n)\subset\augmTeichPinch(\partial M)$, where $P_n$ is the parabolic locus of $\sigma_n$~;
		\item its convex core $C(\sigma_n)\subset M_n$~;
		\item the pleated boundary of the convex core $f_n\colon\partial M-P_n\to M_n$, with induced hyperbolic metric $s_n\in\cT(\partial M-P_n)$ and bending lamination $\beta_n\in\cML(\partial M-P_n)$~;
		\item the nearest-point retraction $r_n\colon(\partial_{\infty}\sigma_n,m_n)\to(\partial C(\sigma_n),s_n)$.
	\end{itemize}
	By a theorem of Canary~\cite{MR1810370}, for each closed curve $c\subset\partial_{\infty}\sigma_n$ one has
	\begin{equation}\label{eq:canary_convex_core_boundary_metric}
		\ell_{\sigma_n}(c^*)\leq\ell_{s_n}(r_n(c)^*)< 45\ell_{m_n}(c)e^{\ell_{m_n}(c)/2}
	\end{equation}
	where $r_n(c)^*$ (resp.\ $c^*$) is the geodesic representative in the homotopy class of $r_n(c)$ on $\partial C(\sigma_n)$ (resp.\ in $M_n$). By Sugawa~\cite{MR1837250}, dropping the middle term upgrades the bound into $\ell_{\sigma_n}(c^*)\leq 2\ell_{m_n}(c)e^{\ell_{m_n}(c)/2}$.

	To prove properness, assume that $(m_n)_{n\in\N}$ converges to a metric $m_{\infty}\in\cT(\partial M-P_{\infty})\subset\augmTeichPinch(\partial M)$. By local finiteness of the stratification of the augmented Teichm\"uller space, up to passing to a subsequence (denoted the same), we can assume that the sequence of parabolic loci $(P_n)_{n\in\N}$ is constant, say $P_n=P$ for all $n\in\N$, and that $P\subsetneq P_{\infty}$. Write $\Sigma\coloneqq\partial M-P$ and $\Sigma_{\infty}\coloneqq\partial M-P_{\infty}$.
	
	\textbf{Algebraic convergence:} As in~\cite[Theorem 5.2]{MR4264581}, we can apply Thurston's relative boundedness theorem~\cite[Theorem 3.1]{MR4556468} to deduce that $(\rho_n)_{n\in\N}$ has an algebraically convergent subsequence. More precisely, let $Q\subset\partial M$ be a doubly incompressible multicurve containing $P_{\infty}$. Since $m_n\to m_{\infty}$ and $\ell_{m_{\infty}}(Q)$ is finite, the sequence $(\ell_{m_n}(Q))_{n\in\N}$ is bounded. By~\eqref{eq:canary_convex_core_boundary_metric}, this implies that the sequence $(\ell_{\sigma_n}(Q^*))_{n\in\N}$ is bounded. Therefore, by Thurston's relative boundedness theorem and after appropriate conjugation, we can assume that $(\rho_n)_{n\in\N}$ converges algebraically to a representation $\rho_{\infty}\colon\pi_1(M)\to\PSL(2,\C)$. The next step is to upgrade this algebraic convergence to strong convergence, following Bonahon--Otal~\cite{MR2144972} and Lecuire~\cite{lecuire2025propernessbendingmap}.

	\textbf{Geometric convergence:} Up to passing to a further subsequence, we can also assume that $(s_n)_{n\in\N}$ converges to a metric $s_{\infty}\in\cT(\Sigma_{\infty})$. Indeed, by~\eqref{eq:canary_convex_core_boundary_metric}, we know that $\ell_{s_n}(P_{\infty})\to 0$. Let $C$ be a filling (non-simple) multicurve on $\Sigma_{\infty}$. Since $m_n\to m_{\infty}$, $(\ell_{m_n}(C))_{n\in\N}$ is bounded. By~\eqref{eq:canary_convex_core_boundary_metric} again, $(\ell_{s_n}(r_n(C)^*))_{n\in\N}$ is bounded and the conclusion follows.
	
	\begin{claim}\label{claim:convergence_pleated_surface_AA}
		The sequence of pleated surfaces $(f_n\restr{\Sigma_{\infty}},s_n\restr{\Sigma_{\infty}},\Gamma_n)_{n\in\N}$ has a convergent subsequence, whose limit is the boundary of the convex core of $\HH^3/\rho_{\infty}(\pi_1(M))$. In particular, $\rho_{\infty}$ is geometrically finite.
	\end{claim}
	\begin{proof}[Proof of claim~\ref{claim:convergence_pleated_surface_AA}]
		Let $F$ be a connected component of $\Sigma_{\infty}$. Since $P_{\infty}$ is pinchable, there exists a geometrically finite structure on $M$ having $F$ as an ideal boundary component, facing a boundary component of its convex core. Since the image of $\pi_1(F)\to\Isom^+(\HH^3)$, via the holonomy representation, is non-elementary, there is an essential (not necessarily simple) closed curve $c$ on $F$ such that $\rho_{\infty}(c)$ is loxodromic, and hence $\rho_n(c)$ is loxodromic for all $n$ (up to passing to a subsequence), as in~\cite[Lemma 17]{MR2144972}.
		
		Let $c_n$ and $c_n^*$ denote the geodesic representatives of $c$ on $(F,s_n)$ and inside $M_n$, respectively.

		\begin{claim}\label{claim:distance_pleated_surface_uniformly_bounded}
			The distance $d_n$ between $f_n(c_n)$ and $c_n^*$ inside $M_n$ is bounded uniformly in $n\in\N$.
		\end{claim}
		\begin{proof}[Proof of claim~\ref{claim:distance_pleated_surface_uniformly_bounded}]
			In~\cite[Lemma 17]{MR2144972}, the same statement is proved under the assumption that $f_n(c_n)$ meets only finitely many bending lines. This can be circumvented by replacing $f_n(c_n)$ with a \emph{polygonal approximation} $\cP_n$ and using the \emph{error estimate} of Keen and Series~\cite[Proposition 4.8]{MR1331998}, see Section~\ref{sec:polygonal_approximations}.

			Pick a sequence of positive numbers $(\epsilon_n)_{n\in\N}$ converging to zero (and all $<\log(3)/2$). For each $n\in\N$, let $\cP_n$ be a $(\epsilon_n,s(\epsilon_n))$-approximation to $f_n(c_n)$ (which exists by~\cite[Lemma 3.2]{MR2464096}). By~\cite[Lemma 17]{MR2144972}, the distance $d'_n$ between $\cP_n$ and $c_n^*$ inside $M_n$ satisfies
			\begin{equation*}
				d'_n\leq\frac{\sum_{\cP_n}\theta_i}{\ell_{\sigma_n}(c_n^*)}.
			\end{equation*}
			By the error estimate (Proposition~\ref{prop:error_estimate_keen_series}), the distance $d_n$ between $f_n(c_n)$ and $c_n^*$ then satisfies
			\begin{align*}
				d_n&\leq \epsilon_n+d'_n\leq \epsilon_n+\frac{\sum_{\cP_n}\theta_i}{\ell_{\sigma_n}(c_n^*)}\leq \epsilon_n+\frac{i(\beta_n,f_n(c_n))+K\epsilon_n\ell_{s_n}(c_n)}{\ell_{\sigma_n}(c_n^*)}\\
				&\leq \epsilon_n+(\Bavg+K\epsilon_n)\frac{\ell_{s_n}(c_n)}{\ell_{\sigma_n}(c_n^*)}
			\end{align*}
			where the second line uses Bridgeman's uniform bound $\Bavg$ on the average bending of a convex pleated surface~\cite[Theorem 1]{MR1621436}. The right-hand side converges to $\Bavg\ell_{s_{\infty}}(c)/\ell_{\rho_{\infty}}(c)$ (and $\ell_{\rho_{\infty}}(c)>0$ by assumption) and is therefore bounded uniformly in $n\in\N$.
		\end{proof}

		Pick a basepoint $O\in\HH^3$. Since the axes of $\rho_n(c)$ converge to the axis of $\rho_{\infty}(c)$, we can assume that $O$ lies on the axis of $\rho_n(c)$ for all $n\in\N$ (up to conjugating by isometries tending to the identity).

		For each $n\in\N$, pick $z_n\in c_n$ and lift $f_n\restr{F}$ to a convex pleated surface $g_n\coloneqq\widetilde{f_{n}\restr{F}}\colon\widetilde{F}\to\HH^3$, where $\widetilde{F}$ is the cover of $F$ corresponding to the kernel of $\pi_1(F)\to\pi_1(M)$, with basepoint $\widetilde{z_n}$ lifting $z_n$. By Claim~\ref{claim:distance_pleated_surface_uniformly_bounded}, we can assume that $g_n(\widetilde{z_n})$ is at uniformly bounded distance from $O$. Since $c$ is a closed curve on $F$ (and up to a subsequence), we can assume that $z_n\to z_{\infty}\in c\subset F$ and $\widetilde{z_n}\to\widetilde{z_{\infty}}$.

		As in~\cite[\S 5]{MR0903850}, we want to apply Arzel\`a--Ascoli's theorem to the sequence of pleated surfaces $(g_n)_{n\in\N}$. We need to check~:
		\begin{itemize}
			\item \emph{Equicontinuity}~: Since each $g_n$ is a pleated surface, it is $1$-Lipschitz with respect to the induced metric $\widetilde{s_n\restr{F}}$ on $\widetilde{F}$ and the hyperbolic metric on $\HH^3$, and $(\widetilde{s_n\restr{F}})_{n\in\N}$ converges uniformly on compacts to $\widetilde{s_{\infty}\restr{F}}$, which concludes.
			\item \emph{Pointwise relative compactness}~: let $x\in\widetilde{F}$. Then
			\[
				d_{\HH^3}(g_n(x),O)\leq d_{\widetilde{s_n\restr{F}}}(x,\widetilde{z_{n}})+d_{\HH^3}(g_n(\widetilde{z_n}),O),
			\]
			which is uniformly bounded, since $\widetilde{z_n}\to \widetilde{z_{\infty}}$, $\widetilde{s_n\restr{F}}\to\widetilde{s_{\infty}\restr{F}}$ uniformly on a compact subset of $\widetilde{F}$ large enough to contain geodesic segments between each $\widetilde{z_n}$ and $x$, and $g_n(\widetilde{z_n})$ is at uniformly bounded distance of $O$.
		\end{itemize}
		Therefore, up to a subsequence, $(g_n)_{n\in\N}$ converges to a continuous map $g\colon\widetilde{F}\to\HH^3$ uniformly on compacts. Consequently, $g$ is equivariant with respect to the deck transformation group of $\widetilde{F}\to F$ and $\rho_{\infty}$, $1$-Lipschitz with respect to $\widetilde{s_{\infty}\restr{F}}$ and locally convex. Up to a subsequence, the bending loci $(\abs{\beta_n\restr{F}})_{n\in\N}$ converge to a geodesic lamination $L$ on $F$. If $\widetilde{L}$ denotes its preimage in $\widetilde{F}$, then $g$ maps isometrically components of $\widetilde{L}$ to geodesics in $\HH^3$ and components of $\widetilde{F}-\widetilde{L}$ to totally geodesic planes in $\HH^3$. Thus, $g$ is a pleated surface in $\HH^3$ and descends to a convex or even pleated surface in $\HH^3/\rho_{\infty}(\pi_1(M))$~\cite[Lemma 3.1]{MR2464096}. It can only be even if $\rho_{\infty}$ is Fuchsian (see the proof of \cite[Claim 4.6]{lecuire2025propernessbendingmap}), in which case it must be geometrically finite. Otherwise, proceeding in the same way for each component $F$ of $\Sigma_{\infty}$, one obtains a convex pleated surface in $\HH^3/\rho_{\infty}(\pi_1(M))$ which bounds its convex core and $\rho_{\infty}$ is then geometrically finite, following Lecuire's argument in~\cite[Lemma 4.7]{lecuire2025propernessbendingmap}.
	\end{proof}

	Knowing that $\rho_{\infty}$ is geometrically finite and controlling the length of accidental parabolics along the sequence, algebraic convergence of $(\rho_n)_{n\in\N}$ to $\rho_{\infty}$ implies strong convergence as in~\cite[Lemma 4.8]{lecuire2025propernessbendingmap}. This follows from Kleineidam's criterion~\cite[Theorem 1]{MR2153905}. Alternatively, one can use McMullen's strong convergence criterion~\cite[Theorem 1.4]{MR1726737} combined with Sugawa's bound~\cite[Proposition 6.1]{MR1837250}, which ensures that accidental parabolics converge \emph{horocyclically}.

	\textbf{Isotopy markings:} It remains to upgrade the strong convergence of holonomy representations $(\rho_n)_{n\in\N}$ to strong convergence of the isotopy classes of metrics $(\sigma_n)_{n\in\N}$. This follows from proper discontinuity of the action of $\Mod(M)$ on $\augmTeichPinch(\partial M)$ (Lemma~\ref{lem:prop_discontinuous}) and continuity of $\mm$, by the same argument as in~\cite[Theorem 1.2]{lecuire2025propernessbendingmap}.
\end{proof}

We focus on the union $\cGF(M;\emptyset,P)=\cGF(M;\emptyset)\cup\cGF(M;P)$.
\begin{corollary}\label{cor:conformal_boundary_P}
	The conformal boundary map $\mm_P\colon \cGF(M;\emptyset,P)\to\cT(\partial M;\emptyset,P)$ is a homeomorphism. Composing it with the homeomorphism from Proposition~\ref{prop:product_structure_augmented_T} gives the homeomorphism
	\[\cGF(M;\emptyset,P)\rightarrow \widehat{(\HH^2)^p}\times \cT(\partial M-P).\]
\end{corollary}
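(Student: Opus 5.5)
The plan is to obtain the corollary by restricting the global homeomorphism of Theorem~\ref{thm:conformal_boundary_map_homeo} and then composing with Proposition~\ref{prop:product_structure_augmented_T}.

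First, $\cGF(M;\emptyset,P)$ is the union of the two strata $\cGF(M;\emptyset)$ and $\cGF(M;P)$, with the subspace topology from $\cGF(M)$. Since $\mm$ sends each stratum $\cGF(M;P')$ bijectively onto $\cT(\partial M;P')$, we get
\[
\mm\bigl(\cGF(M;\emptyset,P)\bigr)=\cT(\partial M;\emptyset)\cup\cT(\partial M;P)=\cT(\partial M;\emptyset,P).
\]
The restriction of a homeomorphism to a subspace is a homeomorphism onto its image, both sides carrying the subspace topology. We must still check that the topology on $\cT(\partial M;\emptyset,P)$ from Definition~\ref{def:topology_on_union_of_strata} agrees with the subspace topology induced from $\augmTeichPinch(\partial M)\subset\augmTeich(\partial M)$. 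This holds by construction, since the definition is stated as the subspace topology of the augmented Teichm\"uller space. Here $P$ is pinchable (it is the parabolic locus under consideration), so $\cT(\partial M;P)$ does lie in $\augmTeichPinch(\partial M)$. Hence $\mm_P$ is a homeomorphism.

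Second, Proposition~\ref{prop:product_structure_augmented_T} applied to $S=\partial M$, with a pants decomposition $P\cup Q$ extending $P$, gives a homeomorphism
\[
\cT(\partial M;\emptyset,P)\to\widehat{(\HH^2)^p}\times(\HH^2)^q,
\]
and $(\HH^2)^q\cong\cT(\partial M-P)$ via Fenchel--Nielsen coordinates on the cusped surface $\partial M-P$. Composing with $\mm_P$ yields the stated homeomorphism.

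The only delicate point is one of bookkeeping. $\partial M$ may be disconnected, and $\partial M-P$ may have several components. Fenchel--Nielsen coordinates and Proposition~\ref{prop:product_structure_augmented_T} apply componentwise, and products of homeomorphisms are homeomorphisms. We should also confirm that $Q$ contains only curves of $\partial M-P$, so that its length–twist coordinates parametrize $\cT(\partial M-P)$. This holds because the curves of $Q$ are disjoint from $P$ and complete $P$ to a pants decomposition.
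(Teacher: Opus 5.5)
Your proposal is correct and matches the paper's approach. The paper states this corollary without a separate proof, as the immediate consequence of restricting the homeomorphism of Theorem~\ref{thm:conformal_boundary_map_homeo} to the two strata $\cGF(M;\emptyset)\cup\cGF(M;P)$ (with $P$ pinchable) and then composing with Proposition~\ref{prop:product_structure_augmented_T}.
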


\subsection{Bending laminations and the tubular topology}\label{sec:space_of_bending_laminations_of_geometrically_finite_structures}
By the realizability criterion of Bonahon--Otal and Lecuire (see Theorem~\ref{thm:realizability_criterion}), the image of $\cGF^{\nf}(M)$ under the bending map is the subset $\cP(M)\subset\cML(\partial M)$, consisting of measured laminations without closed leaves of weight $>\pi$ and satisfying the annulus and disk conditions. Also the image of $\cGF^{\nf}(M;\emptyset)$ is the subset $\cP_{\nc}(M)\subset\cP(M)$ of measured laminations without closed leaves of weight $\geq\pi$.

Following Lecuire, we now describe the topology that needs to be put on $\cP(M)$ to make the bending map continuous, as mentioned in Section~\ref{sec:bending_lamination}.

Let us write $S\coloneqq\partial M$ to lighten notation. For $\lambda\in\cML(S)$, let $\lambda^{(\pi)}$ denote the sublamination of $\lambda$ consisting of its closed leaves of weight $\geq\pi$. Recall that $\abs{\lambda^{(\pi)}}$ denotes the support of $\lambda^{(\pi)}$, which is a simple multicurve. Let also $\cML_{\leq\pi}(S)$ (resp.\ $\cML_{<\pi}(S)$) denote the subset of $\cML(S)$ consisting of measured laminations whose closed leaves all have weight $\leq\pi$ (resp.\ $<\pi$).
\begin{definition}
	Two measured laminations $\mu$ and $\lambda$ are equivalent, denoted $\mu\sim\lambda$, if $\abs{\mu^{(\pi)}}=\abs{\lambda^{(\pi)}}$ and $\mu-\mu^{(\pi)}=\lambda-\lambda^{(\pi)}$.
\end{definition}
In other words, $\mu\sim\lambda$ if and only if they differ only by the weight of their closed leaves of weight $\geq\pi$. Equivalence classes of points of $\cML_{<\pi}(S)$ are singletons. Let $p_{\sim}\colon\cML(S)\to\cML(S)/\sim$ denote the quotient map, sending an element $\lambda$ to its class $[\lambda]$. Note that $p_{\sim}\restr{\cML_{\leq\pi}(S)}$ is a bijection. Define the map $c\colon \cML(S)/\sim\;\to\cML(S)$ sending an equivalence class $[\lambda]$ to its unique representative in $\cML_{\leq\pi}(S)$.

Before defining the topology on $\cML(S)/\sim$, let us recall a way to define the usual (weak-$\ast$) topology on $\cML(S)$. To define a local basis of neighborhoods of a point $\lambda\in\cML(S)$, one first picks a large enough finite collection of generic arcs $\{k_i\}_{i=1}^N$ transverse to $\lambda$. For each connected component of $\lambda$ which is a simple closed curve, pick a generic arc $k_i$ intersecting it once, and for each other component (the exceptional minimal ones) pick a collection of arcs given by the sides of rectangles that form a train track carrying that component. Next, complete this into a finite collection of arcs $\{k_i\}_i$ so that any geodesic on $\partial M$ intersects at least one arc. Then the local basis at $\lambda$ is given by the open sets of the form
\begin{equation}
	B_{\epsilon}(\lambda;\{k_i\}_{i})\coloneqq\left\{\mu\in\cML(S)\mid \abs{i(\lambda,k_i)-i(\mu,k_i)}<\epsilon\text{ for each }i\right\},
\end{equation}
where $\epsilon>0$.

\begin{definition}[Lecuire~\cite{lecuire2025propernessbendingmap}]\label{def:tubular_topology}
	The \emph{tubular topology} on $\cML(S)/\sim$ is the topology defined as follows~:
	\begin{enumerate}
		\item It restricts to the usual topology on $\cML_{<\pi}(S)$~;
		\item A basis of neighborhoods of a point $[\lambda]$ in $\cML(S)/\sim$ with $\lambda^{(\pi)}\neq 0$ is given by the images under $p_{\sim}$ of the following subsets of $\cML(S)$~: for $\epsilon>0$, let
		\begin{equation*}
			B_{\epsilon}([\lambda])\coloneqq\{\mu\in\cML(S)\mid i(\mu,k_i)>\pi-\epsilon\text{ for }i\leq p\text{ and }\abs{i(\lambda,k_i)-i(\mu,k_i)}<\epsilon\text{ for }i>p\},
		\end{equation*}
		where $\{k_i\}_{i=1}^N$ is a collection of transverse arcs as above, and $k_1,\dots,k_p$ ($p<N$) are the ones intersecting the components of $\lambda^{(\pi)}$.
	\end{enumerate}
	From now on, $\cML(S)/\sim$ is always endowed with the tubular topology.
\end{definition}

By definition of its local bases of neighborhoods, the space $\cML(S)/\sim$ with the tubular topology is first-countable. In~\cite{lecuire2025propernessbendingmap}, it is also shown to be a \emph{regular} space, and that compactness and sequential compactness in it are equivalent.

\begin{definition}
	The \emph{space of bending laminations} of $M$, denoted $\cBL(M)$, is defined as the subspace $\cP(M)/\sim$ of $\cML(\partial M)/\sim$, endowed with the tubular topology. By construction, the bending map postcomposed with the quotient map $p_{\sim}$ gives a map	$\bb\colon\cGF^{\nf}(M)\to\cBL(M)$, called the \emph{bending map} as well.
\end{definition}

With the tubular topology, Lecuire proved that the bending map is continuous~\cite{MR2464096} and proper~\cite{lecuire2025propernessbendingmap}.
\begin{theorem}[Lecuire]\label{thm:properness_bending_map}
	The bending map $\bb\colon\cGF^{\nf}(M)\to\cBL(M)$ is continuous and proper.
\end{theorem}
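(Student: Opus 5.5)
This theorem is essentially due to Lecuire, so the plan is to assemble his arguments with the tools already set up here: Proposition~\ref{prop:Lecuire_continuity_bending_lamination_if_pleated_surfaces_converge}, the pleated-surface compactness argument in the proof of Theorem~\ref{thm:conformal_boundary_map_homeo}, and Lemma~\ref{lem:prop_discontinuous}. I would treat continuity and properness separately, and first-countability of both sides lets me work with sequences throughout.

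\emph{Continuity.} Let $\sigma_n\to\sigma$ strongly in $\cGF^{\nf}(M)$, and let $P$ be the parabolic locus of $\sigma$.
\begin{enumerate}
	\item By local finiteness of the stratification (Theorem~\ref{thm:conformal_boundary_map_homeo}), after passing to a subsequence I may assume the parabolic loci $P_n\subseteq P$ are constant.
	\item As in the proof of Proposition~\ref{prop:continuous_convergence_drilling}, McMullen's Hausdorff convergence of truncated convex cores, together with Arzel\`a--Ascoli, gives convergence of the convex-core boundary pleated surfaces on $\partial M-P$. The limit is convex, since $\sigma$ is non-Fuchsian, and the covering argument of Lemma~\ref{lem:convex_pleated_surface_locally_injective} identifies it with $\partial C(\sigma)$.
	\item Proposition~\ref{prop:Lecuire_continuity_bending_lamination_if_pleated_surfaces_converge} then gives $\bb(\sigma_n)\to\bb(\sigma)$ for the usual topology away from $P$. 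This handles the transverse arcs $k_i$ with $i>p$ in Definition~\ref{def:tubular_topology}.
	\item For an arc $k_i$ crossing a component $c\subset P-P_n$, I must show $\liminf i(\bb(\sigma_n),k_i)\geq\pi$. Here $\ell_{\sigma_n}(c^*)\to 0$, so the convex core boundary enters a thin Margulis tube around $c^*$ on both sides of $c$. A boundary arc crossing from one side to the other must turn back, and its exterior angles must total nearly $\pi$. To make this quantitative I would use a polygonal approximation and the Keen--Series estimate (Proposition~\ref{prop:error_estimate_keen_series}), comparing with the degenerate angle-$\pi$ configuration.
	\item No upper bound is needed at $c$, which is exactly why the tubular topology is the right one.
\end{enumerate}

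\emph{Properness.} Let $\bb(\sigma_n)\to[\lambda]$ in $\cBL(M)$; I want a subsequence of $(\sigma_n)$ converging in $\cGF^{\nf}(M)$.
\begin{enumerate}
	\item Again reduce to constant parabolic loci $P_n=P$. Set $P_\infty\coloneqq\abs{\lambda^{(\pi)}}\supseteq P$.
	\item \textbf{Key step: length bounds on the convex-core boundary metrics $s_n$.} The goal is:
	\begin{enumerate}
		\item the lengths $\ell_{s_n}(C)$ of a filling multicurve $C$ of $\partial M-P_\infty$ stay bounded;
		\item $\ell_{s_n}(P_\infty)\to 0$.
	\end{enumerate}
	\item Once these bounds hold, the argument of Theorem~\ref{thm:conformal_boundary_map_homeo} runs with $s_n$ in place of $m_n$. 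Thurston's relative boundedness theorem gives algebraic convergence. Claim~\ref{claim:convergence_pleated_surface_AA} then yields a geometrically finite limit whose convex-core boundary is the limit of the pleated surfaces. Kleineidam's criterion upgrades this to strong convergence, and Lemma~\ref{lem:prop_discontinuous} fixes the isotopy markings.
	\item The limit is non-Fuchsian, because its bending lamination is the limit $\lambda\neq 0$.
	\item Its parabolic locus is $P_\infty$, because the weight-$\pi$ leaves are exactly the closed curves whose lengths tend to zero.
\end{enumerate}

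I expect the key length-bound step to be the main obstacle. The only inputs are the bending data and the realizability conditions of Theorem~\ref{thm:realizability_criterion}, and the argument must exclude degenerations in which lengths blow up. I would argue by contradiction, following Bonahon--Otal and Lecuire.
\begin{enumerate}
	\item If the metrics $s_n$ degenerate, rescaled lengths converge to a nonzero measured lamination $\mu$ on the boundary.
	\item Bridgeman's bound on average bending, combined with $\bb(\sigma_n)\to\lambda$, would force $i(\mu,\lambda)=0$.
	\item In the compressible or acylindrical-failure case, a limiting argument on pleated annuli and disks would produce an essential annulus with $i(\partial A,\lambda)=0$, or an essential disk with $i(\partial D,\lambda)\leq 2\pi$. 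Either one violates the annulus or disk condition.
\end{enumerate}
This is the delicate part of Lecuire's properness paper. I would import its disk-busting and annulus lemmas rather than reprove them.
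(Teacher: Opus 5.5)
The paper gives no proof of this statement; it quotes Lecuire, with continuity from \cite{MR2464096} and properness from \cite{lecuire2025propernessbendingmap}. Your continuity outline is a reasonable reconstruction of Lecuire's argument. Your properness outline, however, has two steps that fail as written.

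First, the claim ``the limit is non-Fuchsian because its bending lamination is $\lambda\neq 0$'' is wrong whenever $S$ is not closed. On the handlebody $\Int(F\times[0,1])$, which is exactly the setting of Section~\ref{sec:punctured_surface_groups}, Fuchsian structures have bending lamination $\pi\,(\partial F\times\{1/2\})\neq 0$ (Remark~\ref{rem:bending_fuchsian}). Moreover, the limiting pleated surface is then even, a case Proposition~\ref{prop:Lecuire_continuity_bending_lamination_if_pleated_surfaces_converge} does not cover. What actually excludes a Fuchsian limit is $\lambda\in\cP(M)$. Take an essential arc $\alpha\subset F$ and the disk $D=\alpha\times[0,1]$. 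Then $\partial D$ crosses $\partial F\times\{1/2\}$ exactly twice, so $i(\partial D,\pi\,(\partial F\times\{1/2\}))=2\pi$ and the disk condition fails. When $S$ is closed, the lamination $0$ fails the annulus condition.

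Second, item (b), $\ell_{s_n}(P_\infty)\to 0$, is asserted but never argued, and your contradiction scheme cannot supply it. As phrased (``if the metrics $s_n$ degenerate \ldots''), the scheme cannot even be right: $s_n$ does leave every compact subset of $\cT(\partial M-P)$ whenever $P_\infty\supsetneq P$. The tubular topology lets $\bb(\sigma_n)$ twist unboundedly about $P_\infty$, so $\bb(\sigma_n)$ need not converge in $\cML(\partial M)$. The rescaled limit $\mu$ of $s_n$ can then be carried by $P_\infty$, in which case $i(\mu,\lambda)=0$ and no annulus or disk condition is violated.

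You therefore need a separate argument that transverse bending at least $\pi-\epsilon$ across $c\in P_\infty$ forces $c$ to become short on $\partial C(\sigma_n)$. Here is where the threshold $\pi$ enters: if $c$ kept definite length, a limiting pleated surface would carry a closed leaf of weight $\pi$ along $c$. It would then be even and come from a Fuchsian limit, which is excluded as above. Only after this can a degeneration argument be run on $\partial M-P_\infty$.

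Even there, the annulus/disk contradiction you sketch is vacuous when $M$ is acylindrical with incompressible boundary, and your outline says nothing about that case. This is why, in the outline the paper attributes to Lecuire (see the proof of Theorem~\ref{thm:conformal_boundary_map_homeo}), algebraic convergence is obtained first and the convex-core boundary is controlled only afterwards.
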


For our purposes, we do not need to consider the entire space $\cBL(M)$ but only some of its strata. More precisely, we focus on the subspace corresponding to bending laminations of structures in $\cGF^{\nf}(M;\emptyset,P)$, i.e.\ only components of $P$ are allowed to have weight $\pi$, and only simultaneously.

\begin{definition}
	For $P$ a simple multicurve on $\partial M$, let $\cBL(M;P)$ (resp.\ $\cBL(M;\emptyset,P)$) denote the subspace of $\cBL(M)$ obtained as the image of $\cGF^{\nf}(M;P)$ (resp.\ $\cGF^{\nf}(M;\emptyset,P)$) under the bending map.
\end{definition}

\subsection{Contractibility at infinity}\label{sec:contractibility_at_infinity_geometrically_finite_structures}
In order to apply Theorem~\ref{thm:fibres_contractible_boundary_of_homeo}, it remains to check the ``contractible image at infinity'' condition. This requires the following lemma, which collects consequences of Lecuire's lemmas.

\begin{lemma}\label{lem:openness_of_cP}
	Let $\lambda\in\cP(M)$ and suppose that $(\lambda_n)_{n\in\N}$ is a sequence in $\cML(\partial M)$ such that $[\lambda_n]\to [\lambda]$ in $\cML(\partial M)/\sim$, for the tubular topology.

	Then, for all $n$ large enough,
	\begin{enumerate}
		\item $|\lambda_n^{(\pi)}|\subseteq|\lambda^{(\pi)}|$~;
		\item $\lambda_n$ satisfies the \emph{annulus} condition, i.e.\ there exists $\eta>0$ such that for any essential annulus $(A,\partial A)$ in $M$, $i(\partial A,\lambda_n)\geq\eta$~;
		\item $\lambda_n$ satisfies the \emph{disk} condition, i.e.\ for any essential disk $(D,\partial D)$ in $M$, $i(\partial D,\lambda_n)>2\pi$.
	\end{enumerate}
	The same holds for $c([\lambda_n])$.
\end{lemma}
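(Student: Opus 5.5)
We prove the three items in turn, working directly from the neighborhood bases $B_{\epsilon}([\lambda])$ of Definition~\ref{def:tubular_topology}. Fix transverse arcs $\{k_i\}_{i=1}^N$ adapted to $\lambda$ as in that definition, with $k_1,\dots,k_p$ dual to the components of $|\lambda^{(\pi)}|$. For every $\epsilon>0$ we have $\lambda_n\in B_{\epsilon}([\lambda])$ for $n$ large. Indeed, $p_{\sim}^{-1}(p_{\sim}(B_\epsilon))=B_\epsilon$, since membership only constrains $i(\mu,k_i)$ from below for $i\le p$, and changing weights of closed leaves of weight $\ge\pi$ elsewhere is prevented by item (1) below. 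The final sentence about $c([\lambda_n])$ then follows: $c([\lambda_n])\sim\lambda_n$, it has the same support, and its weights on the closed leaves of $\lambda_n^{(\pi)}$ are lowered to exactly $\pi$. This leaves intersections with arcs disjoint from those leaves unchanged, keeps the intersection with $k_i$ ($i\le p$) at least $\pi-\epsilon$, and only lowers intersection numbers by a controlled amount. So we will phrase the argument for any $\mu\in B_\epsilon([\lambda])$.

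\textbf{Item (1).} Suppose a closed leaf $c$ of $\mu$ has weight $\ge\pi$ and $c\not\subset|\lambda^{(\pi)}|$. Away from $|\lambda^{(\pi)}|$, $\mu$ is $\epsilon$-close to $\lambda-\lambda^{(\pi)}$ on the arcs $k_i$, $i>p$, and $c$ must cross one of these arcs. Weight $\ge\pi$ on $c$ forces $i(\mu,k_i)\ge\pi$ for some $i>p$. By the usual compactness argument (Lecuire's lemma that a heavy closed leaf in a limit must come from nearby heavy leaves), $\lambda$ would then have a closed leaf of weight $\ge\pi$ isotopic to $c$, which is absurd. Concretely, we would argue by contradiction with a subsequence: the curves $c_n$ are heavy leaves of $\lambda_n$ not in $|\lambda^{(\pi)}|$. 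Since $\lambda_n$ restricted away from a neighborhood of $|\lambda^{(\pi)}|$ converges weak-$\ast$ to $\lambda-\lambda^{(\pi)}$ and has bounded mass, only finitely many curves are possible, so the sequence $c_n$ is eventually constant. Weak-$\ast$ convergence then gives weight $\ge\pi$ on $c$ in $\lambda$.

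\textbf{Items (2) and (3).} These are the openness of the annulus and disk conditions, which we expect to be the main obstacle, since infinitely many annuli and disks are involved. We would split $\partial A$ (resp.\ $\partial D$) according to whether it crosses $|\lambda^{(\pi)}|$. If it crosses a component $c$, then near $c$ the lamination $\mu$ is nearly parallel to $c$ with transverse mass $>\pi-\epsilon$. Hence $i(\partial A,\mu)\ge\pi-\epsilon$, and $i(\partial D,\mu)\ge 2(\pi-\epsilon)$ plus the contribution of any further crossings.

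If it does not cross, we fall back to the weak-$\ast$ closeness of $\lambda_n-\lambda_n^{(\pi)}$ to $\lambda-\lambda^{(\pi)}$, and invoke Lecuire's finiteness/openness results: the set of laminations satisfying the annulus and disk conditions is open in $\cML(\partial M)$, because one only needs to test finitely many curves after using $\lambda$ to bound the length of the relevant boundary curves. This is the part where we would quote Lecuire's lemmas from \cite{MR2207784,lecuire2025propernessbendingmap} rather than reprove them.

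A delicate subcase of (3) is a disk meeting $P=|\lambda^{(\pi)}|$ exactly twice, where we need strict $>2\pi$. We would handle it as follows: such $\partial D$ must also cross the nonperiodic part of $\lambda$, since $i(\partial D,\lambda)>2\pi$ while the $\pi$-weighted contributions give exactly $2\pi$. That extra mass is bounded below and persists under weak-$\ast$ closeness on the arcs $k_i$, $i>p$.
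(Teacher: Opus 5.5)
Your sketch of item (1) matches in spirit Lecuire's Claim 2.2, which the paper simply cites. Items (2) and (3), however, are not established, and the crossing/non-crossing split does not work.

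\emph{The crossing case.} Membership in $B_{\epsilon}([\lambda])$ only bounds from below the $\mu$-mass of the \emph{fixed} dual arc $k_i$. It does not give $i(\partial A,\mu)\geq\pi-\epsilon$ for an arbitrary curve crossing $c$. Near $c$, $\mu$ may consist of leaves of tiny total transverse measure $m\approx i(\mu,c)$, each twisting about $\pi/m$ times around $c$. A curve that crosses the collar while twisting along with them picks up only about $m$ there. For instance, with $T_c$ the Dehn twist about $c$ and $\delta,\delta'$ curves with $i(\delta,c)=1$, the lamination $m\,T_c^N(\delta)$ has mass about $mN\approx\pi$ on a dual arc, yet $i(T_c^N(\delta'),m\,T_c^N(\delta))=m\,i(\delta',\delta)$. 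Whether the rest of $\partial A$ or $\partial D$ compensates is exactly what has to be proved, and the same objection applies to your bound $2(\pi-\epsilon)$ for disks.

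\emph{The non-crossing case.} Openness of the conditions in $\cML(\partial M)$ is not the relevant statement. Tubular convergence allows unbounded mass on $k_1,\dots,k_p$, so $\lambda_n$ need not converge in $\cML(\partial M)$ at all. Moreover, reducing to finitely many curves via a length bound only works when $\lambda$ fills $\partial M$, which realizable laminations need not do. Your ``delicate subcase'' of (3) is argued disk by disk. It gives no lower bound on the extra mass that is uniform over the (generally infinitely many) disks meeting $P$ twice.

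\emph{The missing idea} is a compactness argument that treats all annuli (resp.\ disks) at once, whether or not they cross $P$.
\begin{enumerate}
\item \emph{Annuli.} If (2) fails for infinitely many $n$, pick essential annuli $A_n$ with $i(\partial A_n,\lambda_n)\to 0$ and pass to a Hausdorff limit $\alpha$ of the curves $\partial A_n$. The lower semicontinuity $i(\alpha,\lambda)\leq\liminf_n i(\partial A_n,\lambda_n)$ still holds under tubular convergence, by the argument of Lemma 4.2 in \cite{MR2207784}. Then $i(\alpha,\lambda)=0$ contradicts the annulus condition for $\lambda$ via Lecuire's conditions C1 and C4.
\item \emph{Disks.} If (3) fails, take essential disks $D_n$ with $\liminf_n i(\partial D_n,\lambda_n)\leq 2\pi$. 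Casson's criterion provides a homoclinic leaf $\ell$ in the Hausdorff limit of the curves $\partial D_n$. Lemma 3.6 of \cite{MR2207784} gives $i(\ell,\lambda)>2\pi$, while semicontinuity gives $i(\ell,\lambda)\leq 2\pi$.
\end{enumerate}
Finally, the passage to $c([\lambda_n])$ is not just a controlled lowering of intersection numbers. Lowering the weights on $|\lambda_n^{(\pi)}|$ to exactly $\pi$ can destroy the strict inequality $>2\pi$ for disks meeting these leaves twice. The paper handles this via the proof of Lemma 3.5 of \cite{MR2258744}.
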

\begin{proof}
	The first point is~\cite[Claim 2.2]{lecuire2025propernessbendingmap}.

	Assume that (2) does not hold. Then, up to a subsequence, there exists a sequence of essential annuli $(A_n,\partial A_n)$ in $M$ such that $i(\partial A_n,\lambda_n)\to 0$. Up to a subsequence, $(\partial A_n)_{n\in\N}$ converges to a geodesic lamination $\alpha$ in the Hausdorff topology. The argument of~\cite[Lemma 4.2]{MR2207784} yields
	\begin{equation*}
		i(\alpha,\lambda)\leq\liminf_{n\to\infty}i(\partial A_n,\lambda_n)=0.
	\end{equation*}
	By~\cite[C1,C4]{MR2207784}, this implies that $\lambda$ does not satisfy the annulus condition and contradicts the fact that $\lambda\in\cP(M)$.

	Similarly, if (3) does not hold, then, up to a subsequence, there is a sequence of essential disks $(D_n,\partial D_n)_{n\in\N}$ such that $(i(\partial D_n,\lambda_n))_{n\in\N}$ converges to some value $\leq 2\pi$ and such that $(\partial D_n)_{n\in\N}$ converges in the Hausdorff topology to a geodesic lamination $L$ containing a \emph{homoclinic leaf} $\ell$, by Casson's criterion~\cite[Theorem B.1]{MR2207784}. By~\cite[Lemma 3.6]{MR2207784}, we must have $i(\ell,\lambda)>2\pi$. As above, this yields
	\begin{equation*}
		i(\ell,\lambda)\leq \liminf_{n\to\infty}i(\partial D_n,\lambda_n)\leq 2\pi,
	\end{equation*}
	a contradiction.

	The last sentence is immediate for (1) and (2), and follows from the proof of~\cite[Lemma 3.5]{MR2258744} for (3).
\end{proof}

\begin{proposition}\label{prop:contractibility_at_infinity_geometrically_finite}
	Any $\lambda\in\cBL(M)$ has a basis of neighborhoods in $\cBL(M)$ whose intersection with $\cBL(M;\emptyset)$ is contractible.

	In particular, for any pinchable multicurve $P\subset\partial M$, the bending map $\cGF^{\nf}(M;\emptyset,P)\to\cBL(M)$ has contractible image at infinity, i.e.\ at points of the stratum $\cBL(M;P)$.
\end{proposition}
\begin{proof}
	Let $S\coloneqq\partial M$. We shall use Dehn--Thurston coordinates on $\cML(S)$ with respect to a (marked) pants decomposition $P'$ containing $P$, see~\cite{MR1144770}, which provide a homeomorphism
	\begin{align*}
		\DT_{P'}\colon\cML(S)&\xlongrightarrow{\cong}(\R^2/\{\pm 1\})^{|P'|}\\
		\mu&\longmapsto \left(\widetilde{m}_{c}(\mu),\abs{t}_{c}(\mu)\right)_{c\in P'}
	\end{align*}
	where $\widetilde{m}_{c}(\mu)\coloneqq \pm i(c,\mu)$ is the \emph{signed intersection number} and $\abs{t}_{c}(\mu)$ is the \emph{absolute twist number} of $\mu$ around $c$, with $\pm$ being the sign of the usual twist number $t_c(\mu)$ of $\mu$ around $c$ (see~\cite[\S 2.6]{MR1144770}, where the \emph{absolute} intersection and \emph{signed} twist are used instead).

	Let $\lambda\in\cBL(M;P)$, i.e.\ $\lambda\in\cP(M)$ and $\abs{\lambda^{(\pi)}}=P$.

	Using Dehn--Thurston coordinates, a basis of tubular neighborhoods $U_{\epsilon}([\lambda])$ of $[\lambda]$ in $\cML(\partial M)$ corresponds to neighborhoods in $(\R^2/\{\pm 1\})^{|P'|}$ of the point $\DT_{P'}(\lambda)=(0,\pi)^{\abs{P}}\times y$ of the form
	\begin{equation*}
		B^{\mathrm{tub}}_{\epsilon}(\DT_{P'}(\lambda))=\left((-\epsilon,\epsilon)\times (\pi-\epsilon,+\infty)\right)^{\abs{P}}\times V_{\epsilon}(y),
	\end{equation*}
	where $V_{\epsilon}(y)$ is an $\epsilon$-neighborhood of $y$ in $(\R^2/\{\pm 1\})^{|P'|-|P|}$.

	By Lemma~\ref{lem:openness_of_cP}, for $\epsilon>0$ small enough,
	\begin{equation*}
		U_{\epsilon}([\lambda])\coloneqq\DT_{P'}^{-1}(B^{\mathrm{tub}}_{\epsilon}(\DT_{P'}(\lambda)))
	\end{equation*}
	only contains measured laminations $\mu$ whose representative $c([\mu])$ belongs to $\cP(M)$ and $|\mu^{(\pi)}|\subseteq P$. Therefore, the intersection $U_{\epsilon}([\lambda])\cap\cBL(M;\emptyset)$ is mapped homeomorphically by $\DT_{P'}$ to
	\begin{equation*}
		\Big(\big((-\epsilon,\epsilon)\times (\pi-\epsilon,+\infty)\big)- \big(\{0\}\times [\pi,+\infty)\big)\Big)^{\abs{P}}\times V_{\epsilon}(y), 
	\end{equation*}
	which is contractible.
\end{proof}


\subsection{Injectivity of the bending map for structures with rank-one cusps}\label{sec:injectivity_bending_map_rank_one_cusps}
We can finally prove that the bending map is injective for geometrically finite structures with rank-one cusps, applying the strategy of~\cite{dularschlenker2024pleating} and Theorem~\ref{thm:fibres_contractible_boundary_of_homeo}. The following is Theorem~\ref{thmB} of the introduction.
\begin{theorem}\label{thm:injectivity_bending_map_rank_one_cusps}
	Let $M$ be a hyperbolizable 3-manifold. The bending map
	\begin{equation*}
		\bb\colon\cGF^{\nf}(M)\to\cBL(M)
	\end{equation*}
	is a homeomorphism.
\end{theorem}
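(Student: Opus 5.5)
The plan is to reduce everything to injectivity and then derive injectivity stratum by stratum from Theorem~\ref{thm:fibres_contractible_boundary_of_homeo}. Surjectivity comes from the realizability criterion (Theorem~\ref{thm:realizability_criterion}), and continuity and properness from Theorem~\ref{thm:properness_bending_map}. Once we have a continuous proper bijection, it is a homeomorphism: $\cBL(M)$ is first-countable, and compactness coincides with sequential compactness there, so proper maps are closed. Since $\cGF^{\nf}(M)$ is the disjoint union of the strata $\cGF^{\nf}(M;P)$ and $\bb$ preserves the parabolic locus (it is the support of $\bb(\sigma)^{(\pi)}$), it suffices to prove injectivity on each stratum $\cGF^{\nf}(M;P)$. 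The case $P=\emptyset$ is Theorem~\ref{thm:minimally_parabolic} if $M$ has rank-two cusps, and Theorem~\ref{thm:injectivity_convex_cocompact} otherwise.

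Fix a pinchable $P\neq\emptyset$. I would apply Theorem~\ref{thm:fibres_contractible_boundary_of_homeo} with the following data:
\begin{itemize}
\item $A=\cGF^{\nf}(M;\emptyset,P)$, with $A_\circ=\cGF^{\nf}(M;\emptyset)$ and $A_\infty=\cGF^{\nf}(M;P)$;
\item $B=\cBL(M;\emptyset,P)$, with $B_\circ=\cBL(M;\emptyset)$ and $B_\infty=\cBL(M;P)$.
\end{itemize}
The hypotheses are then checked in turn.
\begin{itemize}
\item \emph{First countability, Hausdorffness and density.} Density of $A_\circ$ and closedness of $A_\infty$ follow from Corollary~\ref{cor:conformal_boundary_P} and Proposition~\ref{prop:product_structure_augmented_T}. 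Non-Fuchsianity is an open condition, so the non-Fuchsian locus is open in the product chart.
\item \emph{Properness onto the image.} This is inherited from Theorem~\ref{thm:properness_bending_map} by restricting to the union of two strata. Here one has to check that preimages of compact sets of $B$ stay inside $A$; this holds because limits cannot acquire new cusps outside $P$, by Lemma~\ref{lem:openness_of_cP}(1).
\item \emph{Open embedding on $A_\circ$.} The map $\bb\restr{A_\circ}$ is a homeomorphism onto $\cP_{\nc}(M)$. Moreover $\cP_{\nc}(M)$ is open in $\cBL(M)$, because it is open in $\cML(\partial M)$ by Lemma~\ref{lem:openness_of_cP} and the tubular topology agrees with the usual one on $\cML_{<\pi}$.
\item \emph{Local product structure.} This is exactly Corollary~\ref{cor:conformal_boundary_P}. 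Take $T\cup\{\infty\}=\widehat{(\HH^2)^p}$ and $U_\infty=\cT(\partial M-P)$, restricted to the non-Fuchsian part. For a compact $K\subset A_\infty$, take $U$ of the form $U_\infty\times$ (a neighborhood of $\infty$). The non-Fuchsian condition is open and a compact set of non-Fuchsian boundary structures has such a product neighborhood, possibly after shrinking $T$ to a region $\{\Im>y\}^p$.
\item \emph{Contractible image at infinity.} This is Proposition~\ref{prop:contractibility_at_infinity_geometrically_finite}.
\item \emph{Retract condition on fibres.} The fibres of $\bb\restr{A_\infty}$ are compact by properness. They are real-analytic subspaces of $\cQI_0(\sigma)\cong\cT(\partial M-P)$ by Proposition~\ref{prop:real_analycity}, hence ANRs by Lemma~\ref{lem:real_analytic_spaces_are_ANRs}. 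A compact ANR inside the manifold $A_\infty$ is a retract of some neighborhood, which can be taken relatively compact since $A_\infty$ is locally compact.
\end{itemize}

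The theorem then gives contractible fibres on $\cGF^{\nf}(M;P)$. These fibres are compact real-analytic spaces, so they are singletons by Corollary~\ref{cor:contractible_compact_real_analytic_spaces_are_singletons}, which yields injectivity on each stratum and hence globally.

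I expect the main obstacle to be the Fuchsian locus when $M\cong S\times\R$. Fuchsian structures with parabolic locus $P$ lie in the closure of $A_\circ$. One must make sure that removing them does not break properness of the restricted map. One must also check that the product neighborhood $U$ of a compact $K\subset A_\infty$ avoids Fuchsian points. My approach would be to use that Fuchsian structures have bending in the set of laminations excluded from $\cP(M)$ (weight exactly $\pi$ on the full boundary multicurve of the double), together with Theorem~\ref{thm:properness_bending_map}. A secondary concern is verifying that the tubular topology restricted to $B$ is Hausdorff, which I would deduce from regularity as established by Lecuire.
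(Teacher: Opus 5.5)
Your proposal is correct and follows essentially the same route as the paper. You apply Theorem~\ref{thm:fibres_contractible_boundary_of_homeo} to $\bb$ on $\cGF^{\nf}(M;\emptyset,P)$, obtain the local product structure from Corollary~\ref{cor:conformal_boundary_P}, contractibility at infinity from Proposition~\ref{prop:contractibility_at_infinity_geometrically_finite}, and the retract condition from real-analyticity, and then conclude with Corollary~\ref{cor:contractible_compact_real_analytic_spaces_are_singletons}. The Fuchsian issue you flag is harmless: Lecuire's properness is already stated on the non-Fuchsian locus, and $\bb^{-1}(\cBL(M;\emptyset,P))=\cGF^{\nf}(M;\emptyset,P)$ because $\bb$ preserves strata.
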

\begin{proof}
	The bending map is surjective by Bonahon--Otal~\cite{MR2144972} and Lecuire~\cite{MR2207784}. It is continuous and proper by Lecuire's Theorem~\ref{thm:properness_bending_map}.

	Let us show that it is injective. Since $\bb$ preserves strata, let us fix a pinchable multicurve $P\subset\partial M$ and consider the bending map
	\begin{equation*}
		\bb_{P}\colon\cGF^{\nf}(M;\emptyset,P)\to\cBL(M;\emptyset,P),
	\end{equation*}
	to which we shall apply Theorem~\ref{thm:fibres_contractible_boundary_of_homeo}.

	The spaces $\cGF^{\nf}(M;\emptyset,P)=\cGF^{\nf}(M;\emptyset)\cup\cGF^{\nf}(M;P)$, and $\cBL(M;\emptyset,P)=\cBL(M;\emptyset)\cup\cBL(M;P)$ are first-countable Hausdorff spaces~: $\cGF(M)\cong\augmTeichPinch(\partial M)$ is metrizable, and $\cBL(M)$ is first-countable by definition of the tubular topology and Hausdorff by~\cite[Claim 2.3]{lecuire2025propernessbendingmap}. The $P$-stratum is also closed and the main stratum is dense in both spaces.

	By the bending parameterization of convex co-compact structures~\cite{dularschlenker2024pleating} or minimally parabolic structures (Theorem~\ref{thm:minimally_parabolic}), the restriction
	\begin{equation*}
		(\bb_{P})_{\circ}\colon\cGF^{\nf}(M;\emptyset)\to\cBL(M;\emptyset)
	\end{equation*}
	is a homeomorphism onto the open subset $\cBL(M;\emptyset)$ of $\cBL(M;\emptyset,P)$.

	By the conformal boundary parameterization Theorem~\ref{thm:conformal_boundary_map_homeo}, and more specifically Corollary~\ref{cor:conformal_boundary_P}, the space $\cGF(M;\emptyset,P)$ is homeomorphic to a product
	\begin{equation*}
		\cGF(M;\emptyset,P)\cong \widehat{(\HH^2)^p}\times \cT(\partial M-P),
	\end{equation*}
	and $\cGF^{\nf}(M;\emptyset,P)$ is open in it, hence it is locally a product near $\cGF^{\nf}(M;P)$.

	By Proposition~\ref{prop:contractibility_at_infinity_geometrically_finite}, the bending map $\bb_{P}$ has contractible image at infinity.

	Fibres of
	\begin{equation*}
		(\bb_{P})_{\infty}\colon\cGF^{\nf}(M;P)\to\cBL(M;P)
	\end{equation*}
	are real-analytic subsets by Proposition~\ref{prop:real_analycity}, hence they are absolute neighborhood retracts (Lemma~\ref{lem:real_analytic_spaces_are_ANRs}), and in particular retracts of relatively compact neighborhoods.

	Therefore, by Theorem~\ref{thm:fibres_contractible_boundary_of_homeo}, fibres of $(\bb_{P})_{\infty}$ are contractible. But they are also compact (by properness) and real-analytic (by Proposition~\ref{prop:real_analycity}), hence they are singletons by Corollary~\ref{cor:contractible_compact_real_analytic_spaces_are_singletons}. This proves that $(\bb_{P})_{\infty}$ is injective.

	It remains to check that $\bb^{-1}$ is continuous or, equivalently, sequentially continuous, by first-countability. Suppose that $\lambda_n\to\lambda$ in $\cBL(M)$. We claim that $\bb^{-1}(\lambda_n)\to\bb^{-1}(\lambda)$ in $\cGF^{\nf}(M)$. It suffices to check that any subsequence of $(\bb^{-1}(\lambda_n))_{n\in\N}$ has a subsequence converging to $\bb^{-1}(\lambda)$, which is true since $\bb$ is continuous, proper and injective.
\end{proof}

\subsection{Punctured surface groups}\label{sec:punctured_surface_groups}
When $S$ is a punctured surface, the space $\cQF(S)$ of quasi-Fuchsian structures on $S\times\R$ appears as a stratum of the space of geometrically finite structures on the interior of $\overline{M}\coloneqq F\times [0,1]$, where $F$ is obtained by removing small open disk neighborhoods of the punctures of $S$. The manifold $\overline{M}$ is a handlebody with boundary the double of $F$. The simple multicurve $P\coloneqq \partial F\times \{1/2\}$ on its boundary is pinchable, and we have
\begin{equation*}
	\cQF(S)=\cGF(\Int(\overline{M});P).
\end{equation*}
Therefore, Theorem~\ref{thm:injectivity_bending_map_rank_one_cusps} implies that the non-Fuchsian subspace $\cQF^{\nf}(S)$ is parameterized by the bending map. This extends the known case of the punctured torus, proved by Series in~\cite{MR2258745} using the pleating invariants of Keen--Series~\cite{MR2052972}.

\bibliographystyle{alpha}
\bibliography{bib.bib}
\end{document}

%% file: commands.tex
\DeclareMathOperator{\Teich}{Teich}
\DeclareMathOperator{\Lip}{Lip}

\usepackage{amsthm}
\newtheorem{theorem}[subsection]{Theorem}
\newtheorem*{theorem*}{Theorem}
\newtheorem{proposition}[subsection]{\rm\bf Proposition}

\newtheorem{lemma}[subsection]{Lemma}
\newtheorem{corollary}[subsection]{Corollary}
\newtheorem{definition}[subsection]{Definition}
\newtheorem{remark}[subsection]{Remark}

\newtheorem{claim}[subsection]{Claim}

\newcommand{\C}{{\mathbb{C}}}
\newcommand{\CP}{{\mathbb{CP}}}
\newcommand{\N}{{\mathbb{N}}}
\newcommand{\bS}{{\mathbb{S}}}

\newcommand{\HH}{{\mathbb{H}}}
\newcommand{\R}{{\mathbb{R}}}

\newcommand{\Z}{{\mathbb{Z}}}

\newcommand{\cCC}{{\mathcal{CC}}}
\newcommand{\cGF}{{\mathcal{GF}}}

\newcommand{\cH}{{\mathcal{H}}}
\newcommand{\cI}{{\mathcal{I}}}

\newcommand{\cSH}{{\mathcal{SH}}}
\newcommand{\cSI}{{\mathcal{SI}}}

\newcommand{\cAH}{{\mathcal{AH}}}
\newcommand{\cQH}{{\mathcal{QH}}}

\newcommand{\cQI}{{\mathcal{QI}}}
\newcommand{\Bavg}{{\mathrm{B}_{\mathrm{avg}}}}
\newcommand{\nf}{\mathrm{nF}}

\newcommand{\cP}{{\mathcal{P}}}

\newcommand{\cT}{{\mathcal{T}}}
\newcommand{\cV}{{\mathcal{V}}}
\newcommand{\cX}{{\mathcal{X}}}

\newcommand{\cML}{{\mathcal{ML}}}
\newcommand{\cBL}{{\mathcal{BL}}}

\newcommand{\cGL}{{\mathcal{GL}}}

\newcommand{\cQF}{{\mathcal{QF}}}
\newcommand{\cMP}{{\mathcal{MP}}}

\newcommand{\Isom}{\rm{Isom}}
\newcommand{\PSL}{\rm{PSL}}
\newcommand{\CH}{\rm{CH}}

\newcommand{\augmTeich}{\overline{\cT}}
\newcommand{\augmTeichPinch}{\overline{\cT}^{\mathrm{pinch}}}

\DeclareMathOperator{\Int}{Int}

\DeclareMathOperator{\DT}{DT}
\DeclareMathOperator{\U}{U}
\DeclareMathOperator{\id}{id}

\DeclareMathOperator{\proj}{proj}
\DeclareMathOperator{\mm}{m}
\DeclareMathOperator{\bb}{b}

\DeclareMathOperator{\Mod}{Mod}

\DeclareMathOperator{\nc}{nc}

\newcommand{\epic}{\twoheadrightarrow}
\newcommand{\monic}{\xhookrightarrow{}}

\newcommand{\restr}[1]{|_{#1}}

\newcommand{\abs}[1]{\left|#1\right|}

%% file: bib.bib
@article {MR2144972,
    AUTHOR = {Bonahon, Francis and Otal, Jean-Pierre},
     TITLE = {Laminations mesur\'ees de plissage des vari\'et\'es
              hyperboliques de dimension 3},
   JOURNAL = {Ann. of Math. (2)},
  FJOURNAL = {Annals of Mathematics. Second Series},
    VOLUME = {160},
      YEAR = {2004},
    NUMBER = {3},
     PAGES = {1013--1055},
      ISSN = {0003-486X,1939-8980},
   MRCLASS = {57M50 (57N10 57R30)},
  MRNUMBER = {2144972},
MRREVIEWER = {Thilo\ Kuessner},
       DOI = {10.4007/annals.2004.160.1013},
       URL = {https://doi.org/10.4007/annals.2004.160.1013},
}

@article{dularschlenker2024pleating,
  title={Convex co-compact hyperbolic manifolds are determined by their pleating lamination},
  author={Dular, Bruno and Schlenker, Jean-Marc},
  journal={to appear in \emph{Annals of Mathematics}},
  note={arXiv:2403.10090},
  year={2024}
}

@incollection{MR4556468,
    AUTHOR = {Thurston, William P.},
     TITLE = {Hyperbolic structures on 3-manifolds, {III}: Deformations of
              3-manifolds with incompressible boundary},
 BOOKTITLE = {Collected works of {W}illiam {P}. {T}hurston with commentary.
              {V}ol. {II}. 3-manifolds, complexity and geometric group
              theory},
     PAGES = {111--129},
      NOTE = {1986 preprint, 1998 eprint},
 PUBLISHER = {Amer. Math. Soc., Providence, RI},
      YEAR = {1986},
      ISBN = {978-1-4704-6389-2; [9781470468347]; [9781470451646]},
   MRCLASS = {57K32},
  MRNUMBER = {4556468},
}

@article {MR3134412,
    AUTHOR = {Anderson, James W. and Lecuire, Cyril},
     TITLE = {Strong convergence of {K}leinian groups: the cracked eggshell},
   JOURNAL = {Comment. Math. Helv.},
  FJOURNAL = {Commentarii Mathematici Helvetici. A Journal of the Swiss
              Mathematical Society},
    VOLUME = {88},
      YEAR = {2013},
    NUMBER = {4},
     PAGES = {813--857},
      ISSN = {0010-2571,1420-8946},
   MRCLASS = {57M50 (30F40)},
  MRNUMBER = {3134412},
MRREVIEWER = {Ken-ichi\ Ohshika},
       DOI = {10.4171/CMH/304},
       URL = {https://doi.org/10.4171/CMH/304},
}

@article {MR2464096,
    AUTHOR = {Lecuire, Cyril},
     TITLE = {Continuity of the bending map},
   JOURNAL = {Ann. Fac. Sci. Toulouse Math. (6)},
  FJOURNAL = {Annales de la Facult\'{e} des Sciences de Toulouse.
              Math\'{e}matiques. S\'{e}rie 6},
    VOLUME = {17},
      YEAR = {2008},
    NUMBER = {1},
     PAGES = {93--119},
      ISSN = {0240-2963,2258-7519},
   MRCLASS = {57M50},
  MRNUMBER = {2464096},
MRREVIEWER = {Thilo\ Kuessner},
       URL = {http://afst.cedram.org/item?id=AFST_2008_6_17_1_93_0},
}

@article {MR1331998,
    AUTHOR = {Keen, Linda and Series, Caroline},
     TITLE = {Continuity of convex hull boundaries},
   JOURNAL = {Pacific J. Math.},
  FJOURNAL = {Pacific Journal of Mathematics},
    VOLUME = {168},
      YEAR = {1995},
    NUMBER = {1},
     PAGES = {183--206},
      ISSN = {0030-8730,1945-5844},
   MRCLASS = {30F40 (57M50)},
  MRNUMBER = {1331998},
MRREVIEWER = {Bruno\ P.\ Zimmermann},
       URL = {http://projecteuclid.org/euclid.pjm/1102620682},
}

@incollection {MR0903852,
    AUTHOR = {Epstein, D. B. A. and Marden, A.},
     TITLE = {Convex hulls in hyperbolic space, a theorem of {S}ullivan, and
              measured pleated surfaces},
 BOOKTITLE = {Analytical and geometric aspects of hyperbolic space
              ({C}oventry/{D}urham, 1984)},
    SERIES = {London Math. Soc. Lecture Note Ser.},
    VOLUME = {111},
     PAGES = {113--253},
 PUBLISHER = {Cambridge Univ. Press, Cambridge},
      YEAR = {1987},
      ISBN = {0-521-33906-5},
   MRCLASS = {52A55 (32G15 57N10 57R25 58F17)},
  MRNUMBER = {903852},
MRREVIEWER = {William\ Dunbar},
}

@incollection {MR0903850,
    AUTHOR = {Canary, R. D. and Epstein, D. B. A. and Green, P.},
     TITLE = {Notes on notes of {T}hurston},
 BOOKTITLE = {Analytical and geometric aspects of hyperbolic space
              ({C}oventry/{D}urham, 1984)},
    SERIES = {London Math. Soc. Lecture Note Ser.},
    VOLUME = {111},
     PAGES = {3--92},
 PUBLISHER = {Cambridge Univ. Press, Cambridge},
      YEAR = {1987},
      ISBN = {0-521-33906-5},
   MRCLASS = {57N10 (32G15 32G99 54A20 57M99 58F17)},
  MRNUMBER = {903850},
MRREVIEWER = {William\ Dunbar},
}

@incollection {MR2039996,
    AUTHOR = {Wolpert, Scott A.},
     TITLE = {Geometry of the {W}eil-{P}etersson completion of
              {T}eichm\"{u}ller space},
 BOOKTITLE = {Surveys in differential geometry, {V}ol. {VIII} ({B}oston,
              {MA}, 2002)},
    SERIES = {Surv. Differ. Geom.},
    VOLUME = {8},
     PAGES = {357--393},
 PUBLISHER = {Int. Press, Somerville, MA},
      YEAR = {2003},
      ISBN = {1-57146-114-0},
   MRCLASS = {32G15 (30F60)},
  MRNUMBER = {2039996},
MRREVIEWER = {Samuel\ Grushevsky},
       DOI = {10.4310/SDG.2003.v8.n1.a13},
       URL = {https://doi.org/10.4310/SDG.2003.v8.n1.a13},
}

@book{MR1144770,
    AUTHOR = {Penner, R. C. and Harer, J. L.},
     TITLE = {Combinatorics of train tracks},
    SERIES = {Annals of Mathematics Studies},
    VOLUME = {125},
 PUBLISHER = {Princeton University Press, Princeton, NJ},
      YEAR = {1992},
     PAGES = {xii+216},
      ISBN = {0-691-08764-4; 0-691-02531-2},
   MRCLASS = {57M99 (30F60 57N05 57R30)},
  MRNUMBER = {1144770},
MRREVIEWER = {Y.\ Minsky},
       DOI = {10.1515/9781400882458},
       URL = {https://doi.org/10.1515/9781400882458},
}

@book{MR0872468,
    AUTHOR = {Daverman, Robert J.},
     TITLE = {Decompositions of manifolds},
    SERIES = {Pure and Applied Mathematics},
    VOLUME = {124},
 PUBLISHER = {Academic Press, Inc., Orlando, FL},
      YEAR = {1986},
     PAGES = {xii+317},
      ISBN = {0-12-204220-4},
   MRCLASS = {57-01 (54B15)},
  MRNUMBER = {872468},
}

@article {MR0224087,
    AUTHOR = {Finney, Ross L.},
     TITLE = {Pseudo-isotopies and cellular sets},
   JOURNAL = {Michigan Math. J.},
  FJOURNAL = {Michigan Mathematical Journal},
    VOLUME = {14},
      YEAR = {1967},
     PAGES = {417--421},
      ISSN = {0026-2285,1945-2365},
   MRCLASS = {55.30},
  MRNUMBER = {224087},
MRREVIEWER = {A.\ H.\ Copeland, Jr.},
       URL = {http://projecteuclid.org/euclid.mmj/1028999842},
}

@article {MR1413855,
    AUTHOR = {Bonahon, Francis},
     TITLE = {Shearing hyperbolic surfaces, bending pleated surfaces and
              {T}hurston's symplectic form},
   JOURNAL = {Ann. Fac. Sci. Toulouse Math. (6)},
  FJOURNAL = {Toulouse. Facult\'e{} des Sciences. Annales. Math\'ematiques.
              S\'erie 6},
    VOLUME = {5},
      YEAR = {1996},
    NUMBER = {2},
     PAGES = {233--297},
      ISSN = {0240-2955},
   MRCLASS = {57M50 (53C15 57N05 57N10)},
  MRNUMBER = {1413855},
MRREVIEWER = {Athanase\ Papadopoulos},
       URL = {http://www.numdam.org/item?id=AFST_1996_6_5_2_233_0},
}

@article {MR4651897,
    AUTHOR = {Baba, Shinpei and Ohshika, Ken'ichi},
     TITLE = {Realisation of bending measured laminations by {K}leinian
              surface groups},
   JOURNAL = {Int. Math. Res. Not. IMRN},
  FJOURNAL = {International Mathematics Research Notices. IMRN},
    VOLUME = {2023},
      YEAR = {2023},
    NUMBER = {19},
     PAGES = {16674--16707},
      ISSN = {1073-7928,1687-0247},
   MRCLASS = {57K20 (30F40)},
  MRNUMBER = {4651897},
MRREVIEWER = {Hongbin\ Sun},
       DOI = {10.1093/imrn/rnac357},
       URL = {https://doi.org/10.1093/imrn/rnac357},
}

@inproceedings {MR278333,
    AUTHOR = {Sullivan, D.},
     TITLE = {Combinatorial invariants of analytic spaces},
 BOOKTITLE = {Proceedings of {L}iverpool {S}ingularities---{S}ymposium, {I}
              (1969/70)},
    SERIES = {Lecture Notes in Math.},
    VOLUME = {Vol. 192},
     PAGES = {165--168},
 PUBLISHER = {Springer, Berlin-New York},
      YEAR = {1971},
   MRCLASS = {57.36 (32.00)},
  MRNUMBER = {278333},
MRREVIEWER = {A.\ H.\ Wallace},
}

@article {MR149503,
    AUTHOR = {Borel, Armand and Haefliger, Andr\'e},
     TITLE = {La classe d'homologie fondamentale d'un espace analytique},
   JOURNAL = {Bull. Soc. Math. France},
  FJOURNAL = {Bulletin de la Soci\'et\'e{} Math\'ematique de France},
    VOLUME = {89},
      YEAR = {1961},
     PAGES = {461--513},
      ISSN = {0037-9484},
   MRCLASS = {57.32 (32.47)},
  MRNUMBER = {149503},
MRREVIEWER = {E.\ Brieskorn},
       URL = {http://www.numdam.org/item?id=BSMF_1961__89__461_0},
}

@incollection {MR1208313,
    AUTHOR = {Kamishima, Yoshinobu and Tan, Ser P.},
     TITLE = {Deformation spaces on geometric structures},
 BOOKTITLE = {Aspects of low-dimensional manifolds},
    SERIES = {Adv. Stud. Pure Math.},
    VOLUME = {20},
     PAGES = {263--299},
 PUBLISHER = {Kinokuniya, Tokyo},
      YEAR = {1992},
      ISBN = {4-314-10077-X},
   MRCLASS = {57M50},
  MRNUMBER = {1208313},
MRREVIEWER = {William\ Goldman},
       DOI = {10.2969/aspm/02010263},
       URL = {https://doi.org/10.2969/aspm/02010263},
}

@article {MR1837250,
    AUTHOR = {Sugawa, Toshiyuki},
     TITLE = {Uniform perfectness of the limit sets of {K}leinian groups},
   JOURNAL = {Trans. Amer. Math. Soc.},
  FJOURNAL = {Transactions of the American Mathematical Society},
    VOLUME = {353},
      YEAR = {2001},
    NUMBER = {9},
     PAGES = {3603--3615},
      ISSN = {0002-9947,1088-6850},
   MRCLASS = {30F40 (30F45)},
  MRNUMBER = {1837250},
MRREVIEWER = {Leonid\ Potyagailo},
       DOI = {10.1090/S0002-9947-01-02775-1},
       URL = {https://doi.org/10.1090/S0002-9947-01-02775-1},
}

@article {MR1810370,
    AUTHOR = {Canary, R. D.},
     TITLE = {The conformal boundary and the boundary of the convex core},
   JOURNAL = {Duke Math. J.},
  FJOURNAL = {Duke Mathematical Journal},
    VOLUME = {106},
      YEAR = {2001},
    NUMBER = {1},
     PAGES = {193--207},
      ISSN = {0012-7094,1547-7398},
   MRCLASS = {57M50 (30F40 30F45)},
  MRNUMBER = {1810370},
MRREVIEWER = {Bruno\ P.\ Zimmermann},
       DOI = {10.1215/S0012-7094-01-10616-9},
       URL = {https://doi.org/10.1215/S0012-7094-01-10616-9},
}

@article {MR1621436,
    AUTHOR = {Bridgeman, Martin},
     TITLE = {Average bending of convex pleated planes in hyperbolic
              three-space},
   JOURNAL = {Invent. Math.},
  FJOURNAL = {Inventiones Mathematicae},
    VOLUME = {132},
      YEAR = {1998},
    NUMBER = {2},
     PAGES = {381--391},
      ISSN = {0020-9910,1432-1297},
   MRCLASS = {57M50 (30F40 30F50 53A35)},
  MRNUMBER = {1621436},
MRREVIEWER = {Michel\ Coornaert},
       DOI = {10.1007/s002220050227},
       URL = {https://doi.org/10.1007/s002220050227},
}

@incollection {MR4264581,
    AUTHOR = {Lecuire, Cyril},
     TITLE = {The double limit theorem and its legacy},
 BOOKTITLE = {In the tradition of {T}hurston---geometry and topology},
     PAGES = {263--290},
 PUBLISHER = {Springer, Cham},
      YEAR = {[2020] \copyright 2020},
      ISBN = {978-3-030-55928-1; 978-3-030-55927-4},
   MRCLASS = {57M30 (30F40 30F60)},
  MRNUMBER = {4264581},
       DOI = {10.1007/978-3-030-55928-1\_8},
       URL = {https://doi.org/10.1007/978-3-030-55928-1_8},
}

@article {MR2207784,
    AUTHOR = {Lecuire, Cyril},
     TITLE = {Plissage des vari\'et\'es hyperboliques de dimension 3},
   JOURNAL = {Invent. Math.},
  FJOURNAL = {Inventiones Mathematicae},
    VOLUME = {164},
      YEAR = {2006},
    NUMBER = {1},
     PAGES = {85--141},
      ISSN = {0020-9910,1432-1297},
   MRCLASS = {57N10 (57M50)},
  MRNUMBER = {2207784},
MRREVIEWER = {Thilo\ Kuessner},
       DOI = {10.1007/s00222-005-0470-z},
       URL = {https://doi.org/10.1007/s00222-005-0470-z},
}

@book {MR3586015,
    AUTHOR = {Marden, Albert},
     TITLE = {Hyperbolic manifolds},
      NOTE = {An introduction in 2 and 3 dimensions},
 PUBLISHER = {Cambridge University Press, Cambridge},
      YEAR = {2016},
     PAGES = {xviii+515},
      ISBN = {978-1-107-11674-0},
   MRCLASS = {57-01 (20H10 30F40 57M50 57N05 57N10)},
  MRNUMBER = {3586015},
MRREVIEWER = {Thilo\ Kuessner},
       DOI = {10.1017/CBO9781316337776},
       URL = {https://doi.org/10.1017/CBO9781316337776},
}

@article {MR2208419,
    AUTHOR = {Schlenker, Jean-Marc},
     TITLE = {Hyperbolic manifolds with convex boundary},
   JOURNAL = {Invent. Math.},
  FJOURNAL = {Inventiones Mathematicae},
    VOLUME = {163},
      YEAR = {2006},
    NUMBER = {1},
     PAGES = {109--169},
      ISSN = {0020-9910,1432-1297},
   MRCLASS = {57M50 (53C21)},
  MRNUMBER = {2208419},
MRREVIEWER = {Igor\ Rivin},
       DOI = {10.1007/s00222-005-0456-x},
       URL = {https://doi.org/10.1007/s00222-005-0456-x},
}

@article {MR1125669,
    AUTHOR = {Labourie, Fran{\c{c}}ois},
     TITLE = {Probl\`eme de {M}inkowski et surfaces \`a{} courbure constante
              dans les vari\'et\'es hyperboliques},
   JOURNAL = {Bull. Soc. Math. France},
  FJOURNAL = {Bulletin de la Soci\'et\'e{} Math\'ematique de France},
    VOLUME = {119},
      YEAR = {1991},
    NUMBER = {3},
     PAGES = {307--325},
      ISSN = {0037-9484,2102-622X},
   MRCLASS = {53C23 (35J60)},
  MRNUMBER = {1125669},
MRREVIEWER = {Philippe\ Delano\"e},
       URL = {http://www.numdam.org/item?id=BSMF_1991__119_3_307_0},
       DOI = {10.24033/bsmf.2169},
 ZBLNUMBER = {0758.53030},
}

@article {MR1163450,
    AUTHOR = {Labourie, Fran{\c{c}}ois},
     TITLE = {M\'etriques prescrites sur le bord des vari\'et\'es
              hyperboliques de dimension {$3$}},
   JOURNAL = {J. Differential Geom.},
  FJOURNAL = {Journal of Differential Geometry},
    VOLUME = {35},
      YEAR = {1992},
    NUMBER = {3},
     PAGES = {609--626},
      ISSN = {0022-040X,1945-743X},
   MRCLASS = {53C20 (53C21 57M50)},
  MRNUMBER = {1163450},
MRREVIEWER = {Viktor\ Schroeder},
       URL = {http://projecteuclid.org/euclid.jdg/1214448258},
       DOI = {10.4310/jdg/1214448258},
 ZBLNUMBER = {0768.53017},
}

@article {MR3001608,
    AUTHOR = {Namazi, Hossein and Souto, Juan},
     TITLE = {Non-realizability and ending laminations: proof of the density
              conjecture},
   JOURNAL = {Acta Math.},
  FJOURNAL = {Acta Mathematica},
    VOLUME = {209},
      YEAR = {2012},
    NUMBER = {2},
     PAGES = {323--395},
      ISSN = {0001-5962,1871-2509},
   MRCLASS = {30F40 (20H10 57M50)},
  MRNUMBER = {3001608},
MRREVIEWER = {Majid\ Heydarpour},
       DOI = {10.1007/s11511-012-0088-0},
       URL = {https://doi.org/10.1007/s11511-012-0088-0},
}

@article {MR2821565,
    AUTHOR = {Ohshika, Ken'ichi},
     TITLE = {Realising end invariants by limits of minimally parabolic,
              geometrically finite groups},
   JOURNAL = {Geom. Topol.},
  FJOURNAL = {Geometry \& Topology},
    VOLUME = {15},
      YEAR = {2011},
    NUMBER = {2},
     PAGES = {827--890},
      ISSN = {1465-3060,1364-0380},
   MRCLASS = {57M50 (30F40)},
  MRNUMBER = {2821565},
MRREVIEWER = {Bruno\ P.\ Zimmermann},
       DOI = {10.2140/gt.2011.15.827},
       URL = {https://doi.org/10.2140/gt.2011.15.827},
}

@article {MR111834,
    AUTHOR = {Bers, Lipman},
     TITLE = {Simultaneous uniformization},
   JOURNAL = {Bull. Amer. Math. Soc.},
  FJOURNAL = {Bulletin of the American Mathematical Society},
    VOLUME = {66},
      YEAR = {1960},
     PAGES = {94--97},
      ISSN = {0002-9904},
   MRCLASS = {30.00},
  MRNUMBER = {111834},
MRREVIEWER = {H.\ L.\ Royden},
       DOI = {10.1090/S0002-9904-1960-10413-2},
       URL = {https://doi.org/10.1090/S0002-9904-1960-10413-2},
}

@book {MR1638795,
    AUTHOR = {Matsuzaki, Katsuhiko and Taniguchi, Masahiko},
     TITLE = {Hyperbolic manifolds and {K}leinian groups},
    SERIES = {Oxford Mathematical Monographs},
      NOTE = {Oxford Science Publications},
 PUBLISHER = {The Clarendon Press, Oxford University Press, New York},
      YEAR = {1998},
     PAGES = {x+253},
      ISBN = {0-19-850062-9},
   MRCLASS = {30F40 (20H10 30F10)},
  MRNUMBER = {1638795},
MRREVIEWER = {I.\ Kra},
}

@article {MR2079598,
    AUTHOR = {Brock, Jeffrey F. and Bromberg, Kenneth W.},
     TITLE = {On the density of geometrically finite {K}leinian groups},
   JOURNAL = {Acta Math.},
  FJOURNAL = {Acta Mathematica},
    VOLUME = {192},
      YEAR = {2004},
    NUMBER = {1},
     PAGES = {33--93},
      ISSN = {0001-5962,1871-2509},
   MRCLASS = {57M50 (30F40)},
  MRNUMBER = {2079598},
MRREVIEWER = {Lee\ Mosher},
       DOI = {10.1007/BF02441085},
       URL = {https://doi.org/10.1007/BF02441085},
}

@article {MR2186972,
    AUTHOR = {Bonahon, Francis},
     TITLE = {Kleinian groups which are almost {F}uchsian},
   JOURNAL = {J. Reine Angew. Math.},
  FJOURNAL = {Journal f\"ur die Reine und Angewandte Mathematik. [Crelle's
              Journal]},
    VOLUME = {587},
      YEAR = {2005},
     PAGES = {1--15},
      ISSN = {0075-4102,1435-5345},
   MRCLASS = {57M50 (30D35 30D40 30F40)},
  MRNUMBER = {2186972},
MRREVIEWER = {Bruno\ P.\ Zimmermann},
       DOI = {10.1515/crll.2005.2005.587.1},
       URL = {https://doi.org/10.1515/crll.2005.2005.587.1},
}

@article {MR2052972,
    AUTHOR = {Keen, Linda and Series, Caroline},
     TITLE = {Pleating invariants for punctured torus groups},
   JOURNAL = {Topology},
  FJOURNAL = {Topology. An International Journal of Mathematics},
    VOLUME = {43},
      YEAR = {2004},
    NUMBER = {2},
     PAGES = {447--491},
      ISSN = {0040-9383},
   MRCLASS = {30F40 (20H10 57M50)},
  MRNUMBER = {2052972},
MRREVIEWER = {John\ R.\ Parker},
       DOI = {10.1016/S0040-9383(03)00052-1},
       URL = {https://doi.org/10.1016/S0040-9383(03)00052-1},
}

@incollection {MR2258745,
    AUTHOR = {Series, Caroline},
     TITLE = {Thurston's bending measure conjecture for once punctured torus
              groups},
 BOOKTITLE = {Spaces of {K}leinian groups},
    SERIES = {London Math. Soc. Lecture Note Ser.},
    VOLUME = {329},
     PAGES = {75--89},
 PUBLISHER = {Cambridge Univ. Press, Cambridge},
      YEAR = {2006},
      ISBN = {978-0-521-61797-0; 0-521-61797-9},
   MRCLASS = {30F40 (20H10 30F35 57M50 57N10)},
  MRNUMBER = {2258745},
MRREVIEWER = {Bruno\ P.\ Zimmermann},
}

@article {MR1049503,
    AUTHOR = {Kra, Irwin},
     TITLE = {Horocyclic coordinates for {R}iemann surfaces and moduli
              spaces. {I}. {T}eichm\"uller and {R}iemann spaces of
              {K}leinian groups},
   JOURNAL = {J. Amer. Math. Soc.},
  FJOURNAL = {Journal of the American Mathematical Society},
    VOLUME = {3},
      YEAR = {1990},
    NUMBER = {3},
     PAGES = {499--578},
      ISSN = {0894-0347,1088-6834},
   MRCLASS = {32G15 (30F35 30F60)},
  MRNUMBER = {1049503},
MRREVIEWER = {Robert\ C.\ Penner},
       DOI = {10.2307/1990927},
       URL = {https://doi.org/10.2307/1990927},
}

@article {MR1726737,
    AUTHOR = {McMullen, Curtis T.},
     TITLE = {Hausdorff dimension and conformal dynamics. {I}. {S}trong
              convergence of {K}leinian groups},
   JOURNAL = {J. Differential Geom.},
  FJOURNAL = {Journal of Differential Geometry},
    VOLUME = {51},
      YEAR = {1999},
    NUMBER = {3},
     PAGES = {471--515},
      ISSN = {0022-040X,1945-743X},
   MRCLASS = {37F30 (30F40 57S30)},
  MRNUMBER = {1726737},
MRREVIEWER = {Edward\ C.\ Taylor},
       URL = {http://projecteuclid.org/euclid.jdg/1214425139},
}

@article {MR2153905,
    AUTHOR = {Kleineidam, G.},
     TITLE = {Strong convergence of {K}leinian groups},
   JOURNAL = {Geom. Funct. Anal.},
  FJOURNAL = {Geometric and Functional Analysis},
    VOLUME = {15},
      YEAR = {2005},
    NUMBER = {2},
     PAGES = {416--452},
      ISSN = {1016-443X,1420-8970},
   MRCLASS = {30F40 (20H10)},
  MRNUMBER = {2153905},
MRREVIEWER = {Natalia\ Kopteva},
       DOI = {10.1007/s00039-005-0511-1},
       URL = {https://doi.org/10.1007/s00039-005-0511-1},
}

@book {MR590044,
    AUTHOR = {Abikoff, William},
     TITLE = {The real analytic theory of {T}eichm\"uller space},
    SERIES = {Lecture Notes in Mathematics},
    VOLUME = {820},
 PUBLISHER = {Springer, Berlin},
      YEAR = {1980},
     PAGES = {vii+144},
      ISBN = {3-540-10237-X},
   MRCLASS = {32G15 (30F99 57N05)},
  MRNUMBER = {590044},
MRREVIEWER = {L.\ Keen},
}

@book {MR2742784,
    AUTHOR = {Buser, Peter},
     TITLE = {Geometry and spectra of compact {R}iemann surfaces},
    SERIES = {Modern Birkh\"auser Classics},
      NOTE = {Reprint of the 1992 edition},
 PUBLISHER = {Birkh\"auser Boston, Ltd., Boston, MA},
      YEAR = {2010},
     PAGES = {xvi+454},
      ISBN = {978-0-8176-4991-3},
   MRCLASS = {58J50 (30F10 32G15 58J53)},
  MRNUMBER = {2742784},
       DOI = {10.1007/978-0-8176-4992-0},
       URL = {https://doi.org/10.1007/978-0-8176-4992-0},
}

@book {MR3053012,
    AUTHOR = {Fathi, Albert and Laudenbach, Fran\c{c}ois and Po\'enaru,
              Valentin},
     TITLE = {Thurston's work on surfaces},
    SERIES = {Mathematical Notes},
    VOLUME = {48},
      NOTE = {Translated from the 1979 French original by Djun M. Kim and
              Dan Margalit},
 PUBLISHER = {Princeton University Press, Princeton, NJ},
      YEAR = {2012},
     PAGES = {xvi+254},
      ISBN = {978-0-691-14735-2},
   MRCLASS = {57-02 (30Fxx 32G15 57M99)},
  MRNUMBER = {3053012},
MRREVIEWER = {Javier\ Aramayona},
}

@article {MR4466646,
    AUTHOR = {Futer, David and Purcell, Jessica S. and Schleimer, Saul},
     TITLE = {Effective bilipschitz bounds on drilling and filling},
   JOURNAL = {Geom. Topol.},
  FJOURNAL = {Geometry \& Topology},
    VOLUME = {26},
      YEAR = {2022},
    NUMBER = {3},
     PAGES = {1077--1188},
      ISSN = {1465-3060,1364-0380},
   MRCLASS = {57K10 (30F40 57K32)},
  MRNUMBER = {4466646},
MRREVIEWER = {Ken-ichi\ Ohshika},
       DOI = {10.2140/gt.2022.26.1077},
       URL = {https://doi.org/10.2140/gt.2022.26.1077},
}

@article {MR4468992,
    AUTHOR = {Futer, David and Purcell, Jessica S. and Schleimer, Saul},
     TITLE = {Effective drilling and filling of tame hyperbolic 3-manifolds},
   JOURNAL = {Comment. Math. Helv.},
  FJOURNAL = {Commentarii Mathematici Helvetici. A Journal of the Swiss
              Mathematical Society},
    VOLUME = {97},
      YEAR = {2022},
    NUMBER = {3},
     PAGES = {457--512},
      ISSN = {0010-2571,1420-8946},
   MRCLASS = {57K32 (30F40)},
  MRNUMBER = {4468992},
MRREVIEWER = {Ken-ichi\ Ohshika},
       DOI = {10.4171/cmh/536},
       URL = {https://doi.org/10.4171/cmh/536},
}

@article {MR648524,
    AUTHOR = {Thurston, William P.},
     TITLE = {Three-dimensional manifolds, {K}leinian groups and hyperbolic
              geometry},
   JOURNAL = {Bull. Amer. Math. Soc. (N.S.)},
  FJOURNAL = {American Mathematical Society. Bulletin. New Series},
    VOLUME = {6},
      YEAR = {1982},
    NUMBER = {3},
     PAGES = {357--381},
      ISSN = {0273-0979,1088-9485},
   MRCLASS = {57N10 (20H15 30F40 57M35 57M40 57S17)},
  MRNUMBER = {648524},
MRREVIEWER = {Klaus\ Johannson},
       DOI = {10.1090/S0273-0979-1982-15003-0},
       URL = {https://doi.org/10.1090/S0273-0979-1982-15003-0},
}

@article {MR2680207,
    AUTHOR = {Agol, Ian},
     TITLE = {Bounds on exceptional {D}ehn filling {II}},
   JOURNAL = {Geom. Topol.},
  FJOURNAL = {Geometry \& Topology},
    VOLUME = {14},
      YEAR = {2010},
    NUMBER = {4},
     PAGES = {1921--1940},
      ISSN = {1465-3060,1364-0380},
   MRCLASS = {57M50 (30F40)},
  MRNUMBER = {2680207},
MRREVIEWER = {James\ W.\ Anderson},
       DOI = {10.2140/gt.2010.14.1921},
       URL = {https://doi.org/10.2140/gt.2010.14.1921},
}

@article {MR1218098,
    AUTHOR = {Bowditch, B. H.},
     TITLE = {Geometrical finiteness for hyperbolic groups},
   JOURNAL = {J. Funct. Anal.},
  FJOURNAL = {Journal of Functional Analysis},
    VOLUME = {113},
      YEAR = {1993},
    NUMBER = {2},
     PAGES = {245--317},
      ISSN = {0022-1236,1096-0783},
   MRCLASS = {57M50 (20H10 30F40 57S30)},
  MRNUMBER = {1218098},
MRREVIEWER = {Darryl\ McCullough},
       DOI = {10.1006/jfan.1993.1052},
       URL = {https://doi.org/10.1006/jfan.1993.1052},
}

@misc{lecuire2025propernessbendingmap,
      title={Properness of the bending map}, 
      author={Cyril Lecuire},
      year={2025},
      eprint={2510.07087},
      archivePrefix={arXiv},
      primaryClass={math.GT},
      url={https://arxiv.org/abs/2510.07087}, 
      note = {arXiv:2510.07087},
}

@incollection {MR2258744,
    AUTHOR = {Lecuire, Cyril},
     TITLE = {An extension of the {M}asur domain},
 BOOKTITLE = {Spaces of {K}leinian groups},
    SERIES = {London Math. Soc. Lecture Note Ser.},
    VOLUME = {329},
     PAGES = {49--73},
 PUBLISHER = {Cambridge Univ. Press, Cambridge},
      YEAR = {2006},
      ISBN = {978-0-521-61797-0; 0-521-61797-9},
   MRCLASS = {57N10 (30F40 57M50)},
  MRNUMBER = {2258744},
MRREVIEWER = {Ken-ichi\ Ohshika},
}

@article {MR1678469,
    AUTHOR = {Bonahon, Francis},
     TITLE = {Variations of the boundary geometry of {$3$}-dimensional
              hyperbolic convex cores},
   JOURNAL = {J. Differential Geom.},
  FJOURNAL = {Journal of Differential Geometry},
    VOLUME = {50},
      YEAR = {1998},
    NUMBER = {1},
     PAGES = {1--24},
      ISSN = {0022-040X,1945-743X},
   MRCLASS = {57N10 (30F45 53C22 57M50)},
  MRNUMBER = {1678469},
MRREVIEWER = {Dubravko\ Ivan\v si\'c},
       URL = {http://projecteuclid.org/euclid.jdg/1214510044},
}

@article {MR251714,
    AUTHOR = {Lacher, R. C.},
     TITLE = {Cell-like mappings. {I}},
   JOURNAL = {Pacific J. Math.},
  FJOURNAL = {Pacific Journal of Mathematics},
    VOLUME = {30},
      YEAR = {1969},
     PAGES = {717--731},
      ISSN = {0030-8730,1945-5844},
   MRCLASS = {55.25 (54.00)},
  MRNUMBER = {251714},
MRREVIEWER = {D.\ R.\ McMillan, Jr.},
       URL = {http://projecteuclid.org/euclid.pjm/1102978255},
}

@article {MR2188131,
    AUTHOR = {Calegari, Danny and Gabai, David},
     TITLE = {Shrinkwrapping and the taming of hyperbolic 3-manifolds},
   JOURNAL = {J. Amer. Math. Soc.},
  FJOURNAL = {Journal of the American Mathematical Society},
    VOLUME = {19},
      YEAR = {2006},
    NUMBER = {2},
     PAGES = {385--446},
      ISSN = {0894-0347,1088-6834},
   MRCLASS = {57M50 (30F40 57N10)},
  MRNUMBER = {2188131},
MRREVIEWER = {Bruno\ P.\ Zimmermann},
       DOI = {10.1090/S0894-0347-05-00513-8},
       URL = {https://doi.org/10.1090/S0894-0347-05-00513-8},
}

@misc{agol2004tamenesshyperbolic3manifolds,
      title={Tameness of hyperbolic 3-manifolds}, 
      author={Ian Agol},
      year={2004},
      eprint={math/0405568},
      archivePrefix={arXiv},
      primaryClass={math.GT},
      url={https://arxiv.org/abs/math/0405568}, 
}

@article {MR326737,
    AUTHOR = {Scott, G. P.},
     TITLE = {Compact submanifolds of {$3$}-manifolds},
   JOURNAL = {J. London Math. Soc. (2)},
  FJOURNAL = {Journal of the London Mathematical Society. Second Series},
    VOLUME = {7},
      YEAR = {1973},
     PAGES = {246--250},
      ISSN = {0024-6107,1469-7750},
   MRCLASS = {57A10},
  MRNUMBER = {326737},
MRREVIEWER = {C.\ D.\ Feustel},
       DOI = {10.1112/jlms/s2-7.2.246},
       URL = {https://doi.org/10.1112/jlms/s2-7.2.246},
}

@article {MR224099,
    AUTHOR = {Waldhausen, Friedhelm},
     TITLE = {On irreducible {$3$}-manifolds which are sufficiently large},
   JOURNAL = {Ann. of Math. (2)},
  FJOURNAL = {Annals of Mathematics. Second Series},
    VOLUME = {87},
      YEAR = {1968},
     PAGES = {56--88},
      ISSN = {0003-486X},
   MRCLASS = {57.05},
  MRNUMBER = {224099},
MRREVIEWER = {W.\ Haken},
       DOI = {10.2307/1970594},
       URL = {https://doi.org/10.2307/1970594},
}

@book {MR551744,
    AUTHOR = {Johannson, Klaus},
     TITLE = {Homotopy equivalences of {$3$}-manifolds with boundaries},
    SERIES = {Lecture Notes in Mathematics},
    VOLUME = {761},
 PUBLISHER = {Springer, Berlin},
      YEAR = {1979},
     PAGES = {ii+303},
      ISBN = {3-540-09714-7},
   MRCLASS = {57N10},
  MRNUMBER = {551744},
MRREVIEWER = {John\ Hempel},
}

@article {MR825931,
    AUTHOR = {McCullough, D. and Miller, A. and Swarup, G. A.},
     TITLE = {Uniqueness of cores of noncompact {$3$}-manifolds},
   JOURNAL = {J. London Math. Soc. (2)},
  FJOURNAL = {Journal of the London Mathematical Society. Second Series},
    VOLUME = {32},
      YEAR = {1985},
    NUMBER = {3},
     PAGES = {548--556},
      ISSN = {0024-6107,1469-7750},
   MRCLASS = {57N10 (57N35)},
  MRNUMBER = {825931},
MRREVIEWER = {J\'ozef\ H.\ Przytycki},
       DOI = {10.1112/jlms/s2-32.3.548},
       URL = {https://doi.org/10.1112/jlms/s2-32.3.548},
}

@article {MR840832,
    AUTHOR = {McCullough, Darryl and Miller, Andy},
     TITLE = {Homeomorphisms of {$3$}-manifolds with compressible boundary},
   JOURNAL = {Mem. Amer. Math. Soc.},
  FJOURNAL = {Memoirs of the American Mathematical Society},
    VOLUME = {61},
      YEAR = {1986},
    NUMBER = {344},
     PAGES = {xii+100},
      ISSN = {0065-9266,1947-6221},
   MRCLASS = {57N10 (55P10 55S37 57M99 57R50)},
  MRNUMBER = {840832},
MRREVIEWER = {Don\v co\ Dimovski},
       DOI = {10.1090/memo/0344},
       URL = {https://doi.org/10.1090/memo/0344},
}

@article {MR442293,
    AUTHOR = {Abikoff, William},
     TITLE = {Degenerating families of {R}iemann surfaces},
   JOURNAL = {Ann. of Math. (2)},
  FJOURNAL = {Annals of Mathematics. Second Series},
    VOLUME = {105},
      YEAR = {1977},
    NUMBER = {1},
     PAGES = {29--44},
      ISSN = {0003-486X},
   MRCLASS = {32G15 (14H15)},
  MRNUMBER = {442293},
MRREVIEWER = {C.\ Earle},
       DOI = {10.2307/1971024},
       URL = {https://doi.org/10.2307/1971024},
}

@article {MR349989,
    AUTHOR = {Hejhal, Dennis A.},
     TITLE = {Universal covering maps for variable regions},
   JOURNAL = {Math. Z.},
  FJOURNAL = {Mathematische Zeitschrift},
    VOLUME = {137},
      YEAR = {1974},
     PAGES = {7--20},
      ISSN = {0025-5874,1432-1823},
   MRCLASS = {30A46},
  MRNUMBER = {349989},
MRREVIEWER = {C.\ Earle},
       DOI = {10.1007/BF01213931},
       URL = {https://doi.org/10.1007/BF01213931},
}

@book{comar1996hyperbolic,
  title={Hyperbolic Dehn surgery and convergence of Kleinian groups},
  author={Comar, Timothy D},
  year={1996},
  publisher={University of Michigan}
}

@article {MR173265,
    AUTHOR = {Lojasiewicz, S.},
     TITLE = {Triangulation of semi-analytic sets},
   JOURNAL = {Ann. Scuola Norm. Sup. Pisa Cl. Sci. (3)},
  FJOURNAL = {Annali della Scuola Normale Superiore di Pisa. Classe di
              Scienze. Serie III},
    VOLUME = {18},
      YEAR = {1964},
     PAGES = {449--474},
      ISSN = {0391-173X},
   MRCLASS = {32.25},
  MRNUMBER = {173265},
MRREVIEWER = {S.\ S.\ Cairns},
}

@book {MR1219310,
    AUTHOR = {Benedetti, Riccardo and Petronio, Carlo},
     TITLE = {Lectures on hyperbolic geometry},
    SERIES = {Universitext},
 PUBLISHER = {Springer-Verlag, Berlin},
      YEAR = {1992},
     PAGES = {xiv+330},
      ISBN = {3-540-55534-X},
   MRCLASS = {57M50 (30F40 30F60 51M10 57N10)},
  MRNUMBER = {1219310},
MRREVIEWER = {Colin\ C.\ Adams},
       DOI = {10.1007/978-3-642-58158-8},
       URL = {https://doi.org/10.1007/978-3-642-58158-8},
}

@article {MR857678,
    AUTHOR = {Douady, Adrien and Earle, Clifford J.},
     TITLE = {Conformally natural extension of homeomorphisms of the circle},
   JOURNAL = {Acta Math.},
  FJOURNAL = {Acta Mathematica},
    VOLUME = {157},
      YEAR = {1986},
    NUMBER = {1-2},
     PAGES = {23--48},
      ISSN = {0001-5962,1871-2509},
   MRCLASS = {30C60 (32G15 58F08)},
  MRNUMBER = {857678},
MRREVIEWER = {William\ Abikoff},
       DOI = {10.1007/BF02392590},
       URL = {https://doi.org/10.1007/BF02392590},
}

@article {MR781586,
    AUTHOR = {Tukia, Pekka},
     TITLE = {Quasiconformal extension of quasisymmetric mappings compatible
              with a {M}\"obius group},
   JOURNAL = {Acta Math.},
  FJOURNAL = {Acta Mathematica},
    VOLUME = {154},
      YEAR = {1985},
    NUMBER = {3-4},
     PAGES = {153--193},
      ISSN = {0001-5962,1871-2509},
   MRCLASS = {30C60 (30F35)},
  MRNUMBER = {781586},
MRREVIEWER = {Gaven\ J.\ Martin},
       DOI = {10.1007/BF02392471},
       URL = {https://doi.org/10.1007/BF02392471},
}

@article {MR1459103,
    AUTHOR = {Alexander, Stephanie B. and Bishop, Richard L.},
     TITLE = {The {F}ary-{M}ilnor theorem in {H}adamard manifolds},
   JOURNAL = {Proc. Amer. Math. Soc.},
  FJOURNAL = {Proceedings of the American Mathematical Society},
    VOLUME = {126},
      YEAR = {1998},
    NUMBER = {11},
     PAGES = {3427--3436},
      ISSN = {0002-9939,1088-6826},
   MRCLASS = {53C22 (53C40 57M25)},
  MRNUMBER = {1459103},
MRREVIEWER = {Wolfgang\ K\"uhnel},
       DOI = {10.1090/S0002-9939-98-04423-2},
       URL = {https://doi.org/10.1090/S0002-9939-98-04423-2},
}

@article{dular2025bending,
  title={Bending parameterization of one-sided degenerated Kleinian surface groups},
  author={Dular, Bruno},
  journal={arXiv preprint arXiv:2504.19891},
  year={2025}
}

@incollection {MR758464,
    AUTHOR = {Morgan, John W.},
     TITLE = {On {T}hurston's uniformization theorem for three-dimensional
              manifolds},
 BOOKTITLE = {The {S}mith conjecture ({N}ew {Y}ork, 1979)},
    SERIES = {Pure Appl. Math.},
    VOLUME = {112},
     PAGES = {37--125},
 PUBLISHER = {Academic Press, Orlando, FL},
      YEAR = {1984},
      ISBN = {0-12-506980-4},
   MRCLASS = {57N10 (30F10)},
  MRNUMBER = {758464},
       DOI = {10.1016/S0079-8169(08)61637-2},
       URL = {https://doi.org/10.1016/S0079-8169(08)61637-2},
}

@book {MR2553578,
    AUTHOR = {Kapovich, Michael},
     TITLE = {Hyperbolic manifolds and discrete groups},
    SERIES = {Modern Birkh\"auser Classics},
      NOTE = {Reprint of the 2001 edition},
 PUBLISHER = {Birkh\"auser Boston, Ltd., Boston, MA},
      YEAR = {2009},
     PAGES = {xxviii+467},
      ISBN = {978-0-8176-4912-8},
   MRCLASS = {57M50 (20F65 20H10 30F40 30F45 30F60 32G15)},
  MRNUMBER = {2553578},
       DOI = {10.1007/978-0-8176-4913-5},
       URL = {https://doi.org/10.1007/978-0-8176-4913-5},
}

@article {MR1622600,
    AUTHOR = {Hodgson, Craig D. and Kerckhoff, Steven P.},
     TITLE = {Rigidity of hyperbolic cone-manifolds and hyperbolic {D}ehn
              surgery},
   JOURNAL = {J. Differential Geom.},
  FJOURNAL = {Journal of Differential Geometry},
    VOLUME = {48},
      YEAR = {1998},
    NUMBER = {1},
     PAGES = {1--59},
      ISSN = {0022-040X,1945-743X},
   MRCLASS = {57M50 (53C21)},
  MRNUMBER = {1622600},
MRREVIEWER = {Athanase\ Papadopoulos},
       URL = {http://projecteuclid.org/euclid.jdg/1214460606},
}
